\theoremstyle{plain}
\newtheorem{theorem}{Theorem}[section]
\newtheorem{remark}{Remark}[section]
\newtheorem{lemma}{Lemma}[section]
\newtheorem{assumption}{Assumption}[section]
\newtheorem{prop}{Proposition}[section]
\newtheorem{corollary}{Corollary}[section]
\numberwithin{equation}{section}
\def\E{\mathbb{E}}
\title{On the Convergence of Stochastic Gradient Descent
\\ for Nonlinear Ill-Posed Problems}
\author{Bangti Jin\thanks{Department of Computer Science, University College London, Gower Street, London WC1E 6BT, UK (b.jin@ucl.ac.uk, bangti.jin@gmail.com).}
\and Zehui Zhou\thanks{Department of Mathematics, The Chinese University of Hong Kong, Shatin, New Territories, Hong Kong. ({zhzhou@math.cuhk.edu.hk, zou@math.cuhk.edu.hk}).
The work of the third author was substantially supported by Hong Kong RGC General Research Fund (Project
14304517) and National Natural Science Foundation of China/Hong Kong Research Grants Council
Joint Research Scheme 2016/17 (project N CUHK437/16).} \and Jun Zou\footnotemark[2].
}
\begin{document}
\date{}
\maketitle

\begin{abstract}
In this work, we analyze the regularizing property of the stochastic gradient descent
for the efficient numerical solution
of a class of nonlinear ill-posed inverse problems in Hilbert spaces. At each step of the iteration, the method randomly
chooses one equation from the nonlinear system to obtain an unbiased stochastic estimate of the gradient, and then performs a
descent step with the estimated gradient. It is a randomized version of the classical Landweber method for nonlinear
inverse problems, and it is highly scalable to the problem size and holds significant potentials for solving large-scale inverse problems.
Under the canonical tangential cone condition, we prove the regularizing property for \textit{a priori} stopping rules,
and then establish the convergence rates under suitable sourcewise condition and range invariance condition.

\smallskip
\noindent{\bf Keywords}:
stochastic gradient descent, regularizing property, nonlinear inverse problems, convergence rates

\smallskip
\noindent{\bf AMS Subject Classifications}: 65J20, 65J22, 47J06
\end{abstract}

\section{Introduction}

This work is concerned with the numerical solution of the system of nonlinear ill-posed operator equations
\begin{equation}\label{eqn:nonlin}
  F_i(x) = y_i^\dag,\quad i =1,\ldots, n,
\end{equation}
where each $F_i: \mathcal{D}(F_i)\to Y$ is a nonlinear mapping
with its domain $\mathcal{D}(F_i)\subset X$, and $X$ and $Y$ are Hilbert spaces with inner
products $\langle\cdot,\cdot\rangle$ and norms $\|\cdot\|$, respectively.
The number 
$n$ of nonlinear equations in \eqref{eqn:nonlin} can potentially be very large.
The notation $y_i^\dag \in Y$ denotes the exact data (corresponding to the reference solution
$x^\dag\in X$ to be defined below, i.e., $y^\dag=F(x^\dag)$). Equivalently, problem \eqref{eqn:nonlin} can be rewritten as
\begin{equation}\label{eqn:nonlin-gen}
  F(x) = y^\dag,
\end{equation}
with the operator $F: X \to Y^n$ ($Y^n$ denotes the product space $Y\times \cdots \times Y$) and $y^\dag\in Y^n$ defined by
\begin{equation*}
  F(x)=\frac{1}{\sqrt{n}}\left(\begin{array}{c}F_1(x)\\ \ldots\\ F_n(x)\end{array}\right)\quad \mbox{and}\quad y^\dag=\frac{1}{\sqrt{n}}\left(\begin{array}{c}y_1^\dag\\ \ldots\\ y_n^\dag\end{array}\right),
\end{equation*}
respectively. The scaling $n^{-\frac12}$ above is introduced for the convenience of later discussions. In practice,
instead of the exact data $y^\dag$, we have access only to the noisy data $y^\delta$ of a noise level
$\delta\ge 0$, namely
\begin{equation*}
  \|y^\delta - y^\dag\| = \delta\,.
\end{equation*}

Nonlinear inverse problems of the form \eqref{eqn:nonlin} arise naturally in many real-world applications, especially parameter
identifications for partial differential equations, e.g., (multifrequency) electrical impedance tomography, inverse scattering and diffuse optical
spectroscopy. Due to the ill-posed nature of problem \eqref{eqn:nonlin}, regularization is needed for
their stable and accurate numerical solutions, and many effective techniques have been proposed
over the past few decades (see, e.g., \cite{EnglHankeNeubauer:1996,KaltenbacherNeubauerScherzer:2008,ScherzerGrasmair:2009,ItoJin:2015,SchusterKaltenbacher:2012}). Among existing techniques,
iterative regularization represents a very powerful and popular class of numerical solvers for problem \eqref{eqn:nonlin}, including
Landweber method, (regularized) Gauss-Newton method, conjugate gradient methods, multigrid methods, and Leverberg-Marquardt
method etc; see the monographs \cite{KaltenbacherNeubauerScherzer:2008} and \cite{SchusterKaltenbacher:2012} for overviews on iterative
regularization methods in the Hilbert space and Banach space settings, respectively. In this work, we are interested in the convergence
analysis of a stochastic iterative technique for problem \eqref{eqn:nonlin} given in Algorithm \ref{alg:sgd}. In the algorithm, the
index $i_k$ of the equation  at the $k$th iteration is drawn uniformly from the index set $\{1,\ldots,n\}$, and $\eta_k>0$ is the
corresponding step size. This algorithm has demonstrated very encouraging numerical results in \cite{ChenLiLiu:2018} for diffuse optical
tomography (with radiative transfer equation). It is also worth noting that a variant of the algorithm, i.e., randomized Kaczmarz
method (RKM) (see, e.g., \cite{NeedellSrebroWard:2016,JiaoJinLu:2017} for the equivalence result between RKM and Algorithm \ref{alg:sgd}),
has been extremely successful in the computed tomography community \cite{HermanLentLutz:1978,HermanMeyer:1993}
(see, e.g., \cite{StrohmerVershynin:2009} and \cite{TanVershynin:2019} for interesting linear convergence results of
the RKM for least-squares regression and phase retrieval with ``well-conditioned'' matrix and exact data).

\begin{algorithm}
  \caption{Stochastic gradient method for problem \eqref{eqn:nonlin}.\label{alg:sgd}}
  \begin{algorithmic}[1]
    \STATE Given initial guess $x_1$.
    \FOR{$k=1,2,\ldots$}
      \STATE Randomly draw an index $i_k$;
      \STATE Update the iterate $x_k^\delta$ by
      \begin{equation}\label{eqn:sgd}
        x_{k+1}^\delta = x_k^\delta - \eta_k F_{i_k}'(x_k^\delta)^*(F_{i_k}(x_k^\delta)-y_{i_k}^\delta);
      \end{equation}
      \STATE Check the stopping criterion.
    \ENDFOR
  \end{algorithmic}
\end{algorithm}

The algorithm is commonly known as stochastic gradient descent (SGD), pioneered by Robbins and Monro in statistical
inference \cite{RobbinsMonro:1951} (see the monograph \cite{KushnerYin:2003} for results on asymptotic
convergence in the context of stochastic approximations). Algorithmically, SGD can be viewed as a randomized version of the classical
Landweber method \cite{Landweber:1951}, which is given by
\begin{equation}\label{eqn:Landweber}
  x_{k+1}^\delta = x_k^\delta - \eta_k F'(x_k^\delta)^*(F(x_k^\delta)-y^\delta).
\end{equation}
It can be viewed as the gradient descent applied to the following quadratic functional
\begin{equation*}
  J(x) = \frac{1}{2}\|F(x)-y^\delta\|^2 = \frac{1}{n}\sum_{i=1}^n \frac{1}{2} \|F_i(x)-y_i^\delta\|^2.
\end{equation*}
Compared with the Landweber method \eqref{eqn:Landweber}, SGD \eqref{eqn:sgd} requires only evaluating one randomly selected
(nonlinear) equation at each iteration, instead of the whole nonlinear system, which substantially reduces the computational cost per
iteration and enables excellent scalability to truly massive data sets (i.e., large $n$), which are increasingly common in
practical applications due to advances in data acquisition technologies. This highly desirable property has attracted significant
recent interest in the machine learning community, especially the training of deep neural networks, where currently SGD and its
variants are the workhorse for many challenging training tasks \cite{Zhang:2004,SutskeverMartens:2013,KingmaBa:2015,BottouCurtisNocedal:2018}.

In the context of nonlinear inverse problems, the Landweber method is relatively well understood, since the influential work
\cite{HankeNeubauerScherzer:1995} (see also \cite{Louis:1989,VainikkoVeretennikov:1986} for linear inverse problems), and
the results were refined and extended in different aspects \cite{KaltenbacherNeubauerScherzer:2008}. In contrast, the stochastic
counterparts, such as SGD, remains largely under-explored for inverse problems, despite their computational appeals.
The theoretical analysis of stochastic iterative methods for inverse problems has just started only recently, despite the
empirical successes (e.g., RKM in computed tomography), and some first theoretical results were obtained
in \cite{JiaoJinLu:2017,JinLu:2019} for linear inverse problems. In particular, in the work \cite{JinLu:2019}, the regularizing
property of SGD for linear inverse problems was established, by drawing on relevant developments in statistical learning theory
(see, e.g., the works \cite{YingPontil:2008,DieuleveutBach:2016,LinRosasco:2017} for regression in reproducing kernel
Hilbert spaces), whereas in \cite{JiaoJinLu:2017}, the preasymptotic convergence behavior of RKM was
analyzed. In this work, we study in depth the regularizing property and convergence rates of
SGD for a class of nonlinear inverse problems, under an \textit{a priori} choice of the stopping index and
standard assumptions on the nonlinear operator $F$; see Section \ref{sec:main} for further details and discussions.
%
The analysis borrows techniques from the works \cite{JinLu:2019,HankeNeubauerScherzer:1995}, i.e.,
handling iteration noise \cite{JinLu:2019} and coping with the nonlinearity of forward map
\cite{HankeNeubauerScherzer:1995}. To the best of our knowledge, this work is the first
attempt to conduct a solid analysis of the stochastic iterative method for nonlinear inverse problems,
and may shed insights into popular variants of SGD in other practical applications.

Throughout, we denote the iterate for the exact data $y^\dag$ by $x_k$. The notation $\mathcal{F}_k$ denotes the filtration generated
by the random indices $\{i_1,\ldots,i_{k-1}\}$ up to the $(k-1)$th iteration. The notation $c$, with or without a subscript, denotes
a generic constant, which may differ at each occurrence, but it is always independent of the noise level $\delta$ and the iteration number $k$.
The rest of the paper is organized as follows. In Section \ref{sec:main} we state the main results and
provide relevant discussions. Then in Sections \ref{sec:conv} and \ref{sec:rate}, we give the detailed proofs on the regularizing
property and convergence rate analysis, respectively, and additional discussions. The paper concludes
with further discussions in Section \ref{sec:conc}. In the appendix, we collect some useful inequalities.

\section{Main results and discussions}\label{sec:main}

To analyze the convergence of Algorithm \ref{alg:sgd} for nonlinear inverse problems, suitable conditions are needed.
For example, for Tikhonov regularization, both nonlinearity and source conditions are often employed to
derive convergence rates \cite{EnglHankeNeubauer:1996,ItoJin:2011,SchusterKaltenbacher:2012,ItoJin:2015}. Below we shall make a number of
assumptions on the nonlinear operators $F_i$ and the reference solution $x^\dag$. Since the solution to problem
\eqref{eqn:nonlin} may be nonunique, the reference solution $x^\dag$ is taken to be the minimum norm solution (with
respect to the initial guess $x_1$), which is known to be unique under Assumption \ref{ass:sol}(ii) below \cite{HankeNeubauerScherzer:1995}.

\begin{assumption}\label{ass:sol}
The following conditions hold:
\begin{itemize}
  \item[$\rm(i)$] The operators $F_i$, $i=1,\ldots,n$, are continuous, with continuous and bounded derivatives on $X$.
  \item[$\rm(ii)$] There exists an $\eta\in(0,\frac12)$ such that for any $x,\tilde x\in X$,
  \begin{equation}\label{eqn:tangential-cone}
    \|F(x)-F(\tilde x)-F'(\tilde x)(x-\tilde x)\|\leq \eta\|F(x)-F(\tilde x)\|.
  \end{equation}
  \item[$\rm(iii)$] There are a family of uniformly bounded operators $R^i_x$ such that for any $x\in X$,
  \begin{equation*}
    F_i'(x) = R_x^iF_i'(x^\dag)
  \end{equation*}
  and $R_x=\mathrm{diag}(R_x^i):Y^n\to Y^n$, with {\rm(}with $\|\cdot\|$ denoting the operator norm on $Y^n${\rm)}
  \begin{equation*}
    \|R_x-I\|\leq c_R\|x-x^\dag\|.
  \end{equation*}
\item[$\rm(iv)$] The following source condition holds: there exist some $\nu\in (0,\frac12)$ and $w\in X$ such that
  \begin{equation*}
    x^\dag - x_1 = (F'(x^\dag)^*F'(x^\dag))^\nu w.
  \end{equation*}
\end{itemize}
\end{assumption}

The conditions in Assumption \ref{ass:sol} are standard for analyzing iterative regularization methods for nonlinear inverse
problems \cite{HankeNeubauerScherzer:1995,KaltenbacherNeubauerScherzer:2008}, and Assumptions \ref{ass:sol}(ii) and (iii)
have been verified for a class of nonlinear inverse problems \cite{HankeNeubauerScherzer:1995}, e.g., parameter identification
for PDEs and nonlinear integral equations. The inequality \eqref{eqn:tangential-cone}
in Assumption \ref{ass:sol}(ii) is commonly known as the tangential cone condition,
and it controls the degree of nonlinearity of the forward operator $F$.
The fractional power $ (F'(x^\dag)^*F'(x^\dag))^\nu$ in
Assumption \ref{ass:sol}(iv) is defined by spectral decomposition (e.g., via Dunford-Taylor integral). It represents a
certain smoothness condition on the exact solution $x^\dag$ (relative to the initial guess $x_1$).
The restriction $\nu<\frac12$ on the smoothness
index $\nu$ is largely due to technical reasons, and even for linear inverse problems, it remains unclear how to improve
the convergence rate beyond $\nu=\frac12$ \cite{JinLu:2019}. Note that
Assumptions \ref{ass:sol}(i) and \ref{ass:sol}(ii) are sufficient for the convergence of SGD
(cf.\,Section\,\ref{sec:conv}), while Assumptions \ref{ass:sol}(iii) and \ref{ass:sol}(iv) are needed
for proving the desired convergence rate of SGD (cf.\,Section\,\ref{sec:rate}).

We shall make one of the following assumptions on the step sizes  $\eta_k$. The step size schedule is viable
since $\max_i\sup_x\|F_i'(x)\|<\infty$, by Assumption \ref{ass:sol}(i). The choice in Assumption \ref{ass:stepsize}(i)
is more general than that in Assumption \ref{ass:stepsize}(ii). The choice in Assumption \ref{ass:stepsize}(ii) is
very popular in practice, and it is often known as a polynomially decaying step size schedule in the literature.
Intuitively, the decaying step size is to compensate the variance of the estimated gradient.

\begin{assumption}\label{ass:stepsize}
The step sizes $\{\eta_k\}_{k\geq 1}$ satisfy one of the following properties.
\begin{itemize}
\item[$\rm(i)$] $\eta_k\max_i\sup_x\|F_i'(x)\|^2<1$ and $\sum_{k=1}^\infty \eta_k =\infty$.
\item[$\rm(ii)$] $\eta_k=\eta_0k^{-\alpha}$, with $\alpha\in(0,1)$ and $\eta_0\leq (\max_i\sup_x\|F_i'(x)\|^2)^{-1}$.
\end{itemize}
\end{assumption}

Due to the random choice of the index $i_k$, the SGD iterate $x_k^\delta$ is a random variable. There are
several different ways to measure the convergence. We shall employ the mean squared norm defined by
$\E[\|\cdot\|^2]$, where the expectation $\E[\cdot]$ is with respect to the filtration $\mathcal{F}_k$
generated by the random indices $i_j$, $j=1,\ldots,k-1$. Clearly, the iterate $x_k^\delta$ is measurable
with respect to $\mathcal{F}_k$.

The first result gives the regularizing property of SGD for problem \eqref{eqn:nonlin} under \textit{a priori} parameter
choice. The quantity  $\sum_{i=1}^k \eta_i$ is the total length of the steps taken up to the $k$th iteration, and
the notation $\mathcal{N}(\cdot)$ denotes the kernel of a linear operator.
\begin{theorem}[convergence for noisy data]\label{thm:conv-noisy}
Let Assumptions \ref{ass:sol}(i)-(ii) and \ref{ass:stepsize}(i) be fulfilled. If the stopping index $k(\delta)\in\mathbb{N}$ is chosen such that
\begin{equation*}
  \lim_{\delta\to0^+}k(\delta)=\infty \quad\mbox{and}\quad \lim_{\delta\to0^+}\delta^2\sum_{i=1}^{k(\delta)}\eta_i = 0,
\end{equation*}
then there exists a solution $x^*\in X$ to problem \eqref{eqn:nonlin} such that
\begin{equation*}
  \lim_{\delta\to 0^+}\E[\|x_{k(\delta)}^\delta - x^*\|^2]=0.
\end{equation*}
Further, if $\mathcal{N}(F'(x^\dag))\subset\mathcal{N}(F'(x))$, then
\begin{equation*}
  \lim_{\delta\to0^+}\E[\|x_{k(\delta)}^\delta -x^\dag\|^2]=0.
\end{equation*}
\end{theorem}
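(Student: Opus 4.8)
The plan is to follow the classical Landweber analysis of Hanke–Neubauer–Scherzer, but carried out in expectation to accommodate the stochasticity of the index choice, in the spirit of the linear SGD analysis in \cite{JinLu:2019}. First I would establish a monotonicity-type estimate for the exact-data iterate $x_k$. Using the update \eqref{eqn:sgd} with $y^\delta$ replaced by $y^\dag$, I would compute
\begin{equation*}
  \E[\|x_{k+1}-x^\dag\|^2\mid\mathcal{F}_k] = \|x_k-x^\dag\|^2 - 2\eta_k\langle F(x_k)-y^\dag,\, F'(x_k)(x_k-x^\dag)\rangle + \eta_k^2\,\E[\|F_{i_k}'(x_k)^*(F_{i_k}(x_k)-y_{i_k}^\dag)\|^2\mid\mathcal{F}_k],
\end{equation*}
where the cross term becomes the full (scaled) inner product because the index is drawn uniformly. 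The tangential cone condition \eqref{eqn:tangential-cone} (with $\tilde x=x_k$, $x=x^\dag$) gives $\|F(x_k)-y^\dag-F'(x_k)(x_k-x^\dag)\|\le\eta\|F(x_k)-y^\dag\|$, which lets me bound $-\langle F(x_k)-y^\dag,F'(x_k)(x_k-x^\dag)\rangle \le -(1-\eta)\|F(x_k)-y^\dag\|^2$; the variance term is controlled by $\eta_k\max_i\sup_x\|F_i'(x)\|^2 < 1$ from Assumption \ref{ass:stepsize}(i) together with $\|F_{i_k}(x_k)-y_{i_k}^\dag\|$. After taking full expectation this yields a recursion of the form $\E[\|x_{k+1}-x^\dag\|^2] \le \E[\|x_k-x^\dag\|^2] - c\,\eta_k\,\E[\|F(x_k)-y^\dag\|^2]$ with $c>0$, hence $\{\E[\|x_k-x^\dag\|^2]\}$ is nonincreasing and $\sum_k\eta_k\,\E[\|F(x_k)-y^\dag\|^2]<\infty$.

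Next I would handle the noisy iterate. A parallel computation comparing $x_{k+1}^\delta$ to $x^\dag$ (now with a $\|y^\delta-y^\dag\|=\delta$ term entering through Cauchy–Schwarz on the residual) produces a recursion
\begin{equation*}
  \E[\|x_{k+1}^\delta-x^\dag\|^2] \le \E[\|x_k^\delta-x^\dag\|^2] + c\,\eta_k\delta^2 - c'\,\eta_k\,\E[\|F(x_k^\delta)-y^\delta\|^2]
\end{equation*}
after absorbing cross terms via Young's inequality and using $\eta<\tfrac12$ so that the residual coefficient stays strictly positive. Summing gives $\E[\|x_{k}^\delta-x^\dag\|^2]\le \|x_1-x^\dag\|^2 + c\,\delta^2\sum_{i=1}^{k}\eta_i$, so under the stated stopping rule the telescoped error term stays bounded and the extra noise contribution vanishes as $\delta\to0^+$. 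To pass from boundedness to convergence, I would first treat the exact-data case: from $\sum_k\eta_k\,\E[\|F(x_k)-y^\dag\|^2]<\infty$ and $\sum_k\eta_k=\infty$ one extracts a subsequence along which $\E[\|F(x_k)-y^\dag\|^2]\to0$; combined with the monotone decrease of $\E[\|x_k-x^\dag\|^2]$ and a Cauchy-sequence argument (estimating $\E[\|x_k-x_\ell\|^2]$ for $k<\ell$ through the intermediate residuals, exactly as in the HNS proof but in expectation), one shows $x_k\to x^*$ in mean square for some $x^*$ solving \eqref{eqn:nonlin}; the tangential cone condition forces $x^*-x^\dag\in\mathcal{N}(F'(x^\dag))^\perp$ when the kernel inclusion holds, giving $x^*=x^\dag$. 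Finally, a stability/continuity argument in $\delta$ — bounding $\E[\|x_{k(\delta)}^\delta - x_{k(\delta)}\|^2]$ by a quantity that grows at most like $\delta^2\sum_{i=1}^{k(\delta)}\eta_i$ (this is where an induction on the iteration index, tracking the discrepancy between the noisy and exact iterates step by step, is needed) — lets me combine with the exact-data convergence of $x_{k(\delta)}$ to conclude $\E[\|x_{k(\delta)}^\delta-x^*\|^2]\to0$.

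The main obstacle I anticipate is the propagation-of-noise step: unlike the deterministic Landweber method, the difference $x_k^\delta - x_k$ is itself a random process driven by the \emph{same} index sequence, and the nonlinearity means the per-step error amplification factor depends on the random iterates $x_k^\delta$ and $x_k$ through $F_{i_k}'$. Controlling $\E[\|x_{k}^\delta - x_{k}\|^2]$ requires carefully exploiting the tangential cone condition to show the ``linearized'' error recursion has multiplicative factor at most $1$ (up to a harmless $O(\eta_k^2)$ and the $O(\eta_k\delta^2)$ additive term), and doing so simultaneously with the variance bookkeeping from Assumption \ref{ass:stepsize}(i). A secondary subtlety is that the subsequential-limit/Cauchy argument of HNS must be re-derived in mean square rather than pathwise, so each norm inequality must be replaced by its $\E[\|\cdot\|^2]$ analogue with the cross terms handled by Cauchy–Schwarz on the expectation; this is routine but must be done with care so that no $\delta$- or $k$-dependent constant sneaks in.
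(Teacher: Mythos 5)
Most of your architecture matches the paper's: the conditional-expectation computation with the tangential cone condition yields exactly the paper's almost-monotonicity estimate (Proposition \ref{prop:mono-exact} and Corollary \ref{cor:mono}), the mean-square Cauchy argument \`a la Hanke--Neubauer--Scherzer gives exact-data convergence (Lemmas \ref{lem:Cauchy}--\ref{lemma:res} and Theorem \ref{thm:conv-exact}), and the Young-inequality absorption giving $\E[\|x^*-x_{k+1}^\delta\|^2]-\E[\|x^*-x_k^\delta\|^2]\le c\,\eta_k\delta^2$ is precisely how the paper controls the noise. Up to that point your proposal is sound.

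The genuine gap is in your final step. You propose to conclude via
$\E[\|x_{k(\delta)}^\delta-x^*\|^2]\le 2\E[\|x_{k(\delta)}^\delta-x_{k(\delta)}\|^2]+2\E[\|x_{k(\delta)}-x^*\|^2]$,
which requires a stability bound $\E[\|x_k^\delta-x_k\|^2]\lesssim\delta^2\sum_{i=1}^k\eta_i$ that is \emph{uniform over $k$ up to the diverging index $k(\delta)$}, obtained by induction on $k$. This is not available under the stated hypotheses. The per-step discrepancy recursion contains the term $\eta_k\,(F_{i_k}'(x_k^\delta)^*-F_{i_k}'(x_k)^*)(F_{i_k}(x_k^\delta)-y_{i_k}^\delta)$, and Assumption \ref{ass:sol}(i) gives only \emph{continuity} of $F_i'$, not a Lipschitz modulus; so you cannot show the multiplicative amplification factor per step is $1+O(\eta_k^2)$. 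Even granting Lipschitz continuity of $F_i'$, the factor would be of the form $1+c\,\eta_k\|F_{i_k}(x_k^\delta)-y_{i_k}^\delta\|$, and the product over $k(\delta)\to\infty$ steps is controlled only if $\sum_k\eta_k\,\E[\|F(x_k^\delta)-y^\delta\|^2]^{1/2}$ stays bounded, which does not follow from the available summability of $\sum_k\eta_k\,\E[\|F(x_k)-y^\dag\|^2]$. The paper avoids this entirely: it telescopes the noisy almost-monotonicity inequality \emph{starting from a fixed large index $k_m$}, giving $\E[\|x_{k_n}^{\delta_n}-x^*\|^2]\le 2\E[\|x_{k_m}^{\delta_n}-x_{k_m}\|^2]+2\E[\|x_{k_m}-x^*\|^2]+c\,\delta_n^2\sum_{j=1}^{k_n-1}\eta_j$, so that stability is needed only at the \emph{fixed} index $k_m$, where a soft, rate-free continuity-in-$\delta$ argument (Lemma \ref{lem:conti-noise}, a finite induction using only continuity of $F_i$ and $F_i'$) suffices. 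You already have all the ingredients for this — you derived the noisy recursion with the $c\,\eta_k\delta^2$ penalty — you just need to sum it from $k_m$ rather than from $1$, and replace your quantitative uniform stability claim by fixed-index pathwise continuity. Note also that the monotonicity estimate should be stated for an arbitrary solution $x^*$ (in particular the limit of the exact-data iterates), not only for $x^\dag$, since the first assertion of the theorem concerns some solution $x^*$ which equals $x^\dag$ only under the additional kernel inclusion.
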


\begin{remark}
The conditions on $k(\delta)$ in Theorem \ref{thm:conv-noisy} are identical with that for the standard
Landweber method \cite[Theorem 2.4]{HankeNeubauerScherzer:1995}, under essentially identical conditions. It is
interesting to note that the consistency actually does not require a monotonically decreasing step size
schedule, and in particular covers the case of a constant step size. This is attributed to the quadratic
structure of the objective functional: The gradient component
\begin{equation*}
  \partial_x \|F_i(x_k^\delta)-y_i^\delta\|^2 = 2F'_i(x_k^\delta)^*(F_i(x_k^\delta)-y_i^\delta)
\end{equation*}
is of order $O(\delta)$ in the neighborhood of the solution $x^*$. In particular, for exact data $y^\dag$,
$\partial_x\|F_i(x_k)-y_i^\dag\|^2$ tends to zero as $x_k\to x^*$.
\end{remark}

Next we make an assumption on the degree of nonlinearity of the operator $F$ in the stochastic sense:
\begin{assumption}\label{ass:stoch}
There exist some $\theta\in(0,1]$ and $c_R>0$ such that for any function $G:X\to Y^n$ and
$z_t= tx_k^\delta + (1-t)x^\dag$, $t\in[0,1]$, there hold
\begin{align*}
  \E[\|(I-R_{z_t})G(x_k^\delta)\|^2]^\frac12 &\leq c_R\E[\|x_k^\delta-x^\dag\|^2]^\frac{\theta}{2}\E[\|G(x_k^\delta)\|^2]^\frac12,\\
  \E[\|(I-R_{z_t}^*)G(x_k^\delta)\|^2]^\frac12 &\leq c_R\E[\|x_k^\delta-x^\dag\|^2]^\frac{\theta}{2}\E[\|G(x_k^\delta)\|^2]^\frac12.
\end{align*}
\end{assumption}

Assumption \ref{ass:stoch} is a stochastic variant of Assumption \ref{ass:sol}(iii), and strengthens the corresponding
estimate in the sense of expectation. The case $\theta=0$ follows trivially from Assumption \ref{ass:sol}(iii), in
view of the boundedness of the operator $R_x$, whereas with the exponent $\theta=1$, it recovers the latter when specialized to a Dirac
measure. The assumption will play a crucial role in the convergence rate analysis, by taking $G(x)=F(x)-y^\delta$ and
$G(x)=F'(x^\dag)(x-x^\dag)$ (see the proofs in  Lemmas \ref{lem:R} and \ref{lem:bound-N}), and in particular, it enables
bounding the terms involving strong conditional dependence.

The next result gives a convergence rate under \textit{a priori} parameter choice, where the notation $[\cdot]$ denotes
taking the integral part of a real number, provided that $\|F'(x^\dag)^*F'(x^\dag)\|\leq1$ and $\eta_0\leq 1$. The
assumptions in Theorem \ref{thm:err-total} are identical with that for the Landweber method \cite{HankeNeubauerScherzer:1995}.
The main idea of the error analysis is to split the mean squared error $\E[\|x_k^\delta-x^\dag\|^2]$ into two parts by the
bias-variance decomposition: one is the error $\|\E[x_k^\delta]-x^\dag\|^2$ of the expected iterate $\E[x_k^\delta]$,
and the other is the variance $\E[\|x_k^\delta-\E[x_k^\delta]\|^2]$ of the iterate $x_k^\delta$:
\begin{equation}\label{eqn:bias-var}
  \E[\|x_k^\delta - x^\dag\|^2] = \|\E[x_k^\delta]-x^\dag\|^2 + \E[\|x_k^\delta-\E[x_k^\delta]\|^2].
\end{equation}
The former is dominated by the approximation error and data error, where the source condition in Assumption
\ref{ass:sol}(iv) plays a role, whereas the latter arises from the random
choice of the index $i_k$ at each iteration. It is interesting to observe that these two parts interact with each
other closely (and also $\E[\|F'(x^\dag)(x_k^\delta-x^\dag)\|^2]$), due to the nonlinearity of the operator; see
Theorems \ref{thm:err-mean} and \ref{thm:err-var}, and thus the analysis differs substantially from that for
linear inverse problems in \cite{JinLu:2019} and the classical Landweber method for nonlinear inverse problems
\cite{HankeNeubauerScherzer:1995}. These two parts lead to a coupled system of recursive inequalities for the
quantities $\E[\|e_k^\delta\|^2]$ and $\E[\|F'(x^\dag)e_k^\delta\|^2]$, from which we derive the desired error estimates
by mathematical induction; see Section \ref{ssec:rate} for the detailed proofs.
\begin{theorem}\label{thm:err-total}
Let Assumptions \ref{ass:sol}, \ref{ass:stepsize}(ii) and \ref{ass:stoch} be fulfilled with $\|w\|$ and $\eta_0$
being sufficiently small, and $x_k^\delta$ be the SGD iterate defined in \eqref{eqn:sgd}. Then the error
$e_k^\delta = x_k^\delta - x^\dag$ satisfies
\begin{align*}
  \E[\|e_k^\delta\|^2] & \leq c^* k^{-\min(2\nu(1-\alpha),\alpha-\epsilon)}\|w\|^2\quad \mbox{and}\quad  \E[\|F'(x^\dag)e_k^\delta\|^2] \leq c^*k^{-\min((1+2\nu)(1-\alpha),1-\epsilon)}\|w\|^2
\end{align*}
for all $k\leq k^*=[(\frac{\delta}{\|w\|})^{-\frac{2}{(2\nu+1)(1-\alpha)}}]$ and small $\epsilon\in(0,\frac\alpha2)$,
where the constant $c^*$ depends on $\nu$, $\alpha$, $\eta_0$, $n$ and $\theta$, but is independent
of $k$ and $\delta$.
\end{theorem}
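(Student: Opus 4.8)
The plan is to carry out the bias–variance program announced before the theorem: prove the two displayed bounds for $\E[\|e_k^\delta\|^2]$ and $\E[\|F'(x^\dag)e_k^\delta\|^2]$ by simultaneous induction on $k$, using the coupled recursive inequalities supplied by Theorems~\ref{thm:err-mean} and~\ref{thm:err-var}. First I would set up notation: write $e_k^\delta = x_k^\delta - x^\dag$, $A = F'(x^\dag)^*F'(x^\dag)$, and split $\E[\|e_k^\delta\|^2]$ according to \eqref{eqn:bias-var} into the squared bias $\|\E[x_k^\delta]-x^\dag\|^2$ and the variance $\E[\|x_k^\delta-\E[x_k^\delta]\|^2]$. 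For the bias term I would invoke the source condition in Assumption~\ref{ass:sol}(iv), $x^\dag - x_1 = A^\nu w$, together with the standard spectral filter estimate for the (perturbed) Landweber iteration, which contributes a term of size $\prod_{i=1}^{k}(1-\eta_i\lambda)\lambda^\nu\|w\| \lesssim (\sum_{i=1}^k\eta_i)^{-\nu}\|w\| \sim k^{-\nu(1-\alpha)}\|w\|$ in the $X$-norm and $k^{-(\nu+\frac12)(1-\alpha)}\|w\|$ in the $\|F'(x^\dag)\cdot\|$-norm, using $\sum_{i=1}^k\eta_i \sim \eta_0 k^{1-\alpha}/(1-\alpha)$ from Assumption~\ref{ass:stepsize}(ii). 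The data-propagation part of the bias is controlled by $\delta^2\sum_{i=1}^k\eta_i \lesssim \delta^2 k^{1-\alpha}$, which for $k\le k^*$ is $\lesssim k^{-2\nu(1-\alpha)}\|w\|^2$ by the defining choice of $k^*$; the nonlinearity corrections (the terms with $R_x-I$, controlled via Assumption~\ref{ass:sol}(iii) and Assumption~\ref{ass:stoch}) are higher order in $\|e_k^\delta\|$ and get absorbed provided $\|w\|$ and $\eta_0$ are small.

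Next I would handle the variance term, which is the genuinely stochastic contribution. Here the recursion from Theorem~\ref{thm:err-var} has the schematic form
\begin{equation*}
  \E[\|x_{k+1}^\delta-\E[x_{k+1}^\delta]\|^2] \le (1-\eta_k\lambda_{\min}\text{-type factor})\,\E[\|x_{k}^\delta-\E[x_{k}^\delta]\|^2] + \eta_k^2\big(\delta^2 + \E[\|F'(x^\dag)e_k^\delta\|^2] + \text{n.l. terms}\big),
\end{equation*}
so the fresh noise injected at step $k$ is of order $\eta_k^2(\delta^2 + \E[\|F'(x^\dag)e_k^\delta\|^2])$, and summing the geometric-type accumulation against the spectral filter gives a variance bound $\lesssim \sum_{i\le k}\eta_i^2 \cdot (\text{stuff})$, which with $\eta_k = \eta_0 k^{-\alpha}$ produces the $k^{-\alpha}$ (up to $\epsilon$) rate — this is exactly where the loss $\min(2\nu(1-\alpha),\alpha-\epsilon)$ and the unavoidable $\epsilon$ enter, the latter because $\sum_{i\le k}i^{-\alpha}\cdot i^{-(\text{rate})}$ is only $\lesssim k^{-\alpha+\epsilon}$ when the exponents balance. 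The two running bounds must be fed into each other: $\E[\|F'(x^\dag)e_k^\delta\|^2]$ on the right-hand side of the variance recursion is estimated by the induction hypothesis for the second claimed bound, and conversely the $X$-norm error feeds the nonlinearity terms in the recursion for $\E[\|F'(x^\dag)e_k^\delta\|^2]$; Assumption~\ref{ass:stoch} (applied with $G(x)=F(x)-y^\delta$ and $G(x)=F'(x^\dag)(x-x^\dag)$, as flagged in the text) is what lets us bound these conditionally-dependent cross terms by $\E[\|e_k^\delta\|^2]^{\theta/2}$ times a manageable factor, keeping them strictly subordinate.

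The induction step then amounts to verifying that, assuming both bounds at indices $\le k$, the recursive inequalities yield both bounds at $k+1$. This requires: (a) the spectral-sum lemmas from the appendix to convert $\prod(1-\eta_i\lambda)$ products and $\sum\eta_i^2\lambda^\beta\prod(\cdots)$ sums into powers of $\sum\eta_i \sim k^{1-\alpha}$; (b) checking that all nonlinearity-induced terms carry an extra power of $\|w\|$ or $\eta_0$ (coming from $\|e_k^\delta\| \lesssim \|w\|k^{-\nu(1-\alpha)}$ and the $c_R$, $\eta$, $\theta$ factors), so that for $\|w\|,\eta_0$ small enough the induction constant $c^*$ can be chosen uniformly; and (c) the bookkeeping that $k\le k^* = [(\delta/\|w\|)^{-2/((2\nu+1)(1-\alpha))}]$ guarantees $\delta^2\sum_{i\le k}\eta_i \lesssim \|w\|^2 k^{-2\nu(1-\alpha)}$ so the data error never dominates the claimed rate. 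The main obstacle I anticipate is precisely closing the coupled induction with a single constant $c^*$: because the bias term for $\E[\|F'(x^\dag)e_k^\delta\|^2]$ feeds back through the nonlinearity into the recursion for $\E[\|e_k^\delta\|^2]$ (and vice versa), one must track the constants carefully and exploit the smallness of $\|w\|$ and $\eta_0$ to make the feedback a genuine contraction rather than a blow-up; the $\epsilon$-loss in the exponent is the price for summing the borderline series $\sum i^{-\alpha-\text{(matched rate)}}$, and one must verify it does not propagate into a worse loss through the iteration.
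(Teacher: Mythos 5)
Your proposal follows essentially the same route as the paper: bias--variance decomposition \eqref{eqn:bias-var}, the coupled recursions of Theorems \ref{thm:err-mean} and \ref{thm:err-var} for $\E[\|e_k^\delta\|^2]$ and $\E[\|B^{\frac12}e_k^\delta\|^2]$, a simultaneous induction closed via the appendix summation estimates, the relation $k^{\frac{1-\alpha}{2}}\delta\le k^{-\nu(1-\alpha)}\|w\|$ for $k\le k^*$, and smallness of $\|w\|,\eta_0$ to absorb the nonlinear feedback --- all of which is exactly how the paper argues. The only caveat is that your schematic one-step variance recursion with a ``$1-\eta_k\lambda_{\min}$'' contraction factor is not literally available (here $B$ has no spectral gap); the actual mechanism, which you then correctly describe, is to unroll $z_{k+1}=(I-\eta_kB)z_k+\eta_kM_k$ and bound the accumulated noise through $\|B^{s+\frac12}\Pi_{j+1}^k(B)\|$, keeping the conditionally dependent double sums controlled by Assumption \ref{ass:stoch}.
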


\begin{remark}
When the exponent $\alpha\in(0,1)$ in the polynomially decaying step size schedule is close to $1$,
setting $k=k^*$ in the error estimates gives rise to the following bounds
\begin{equation*}
 \E[\|e_{k^*}^\delta\|^2] \leq c^* \|w\|^\frac{2}{2\nu+1}\delta^\frac{4\nu}{2\nu+1}\quad \mbox{and}\quad  \E[\|F'(x^\dag)e_{k^*}^\delta\|^2] \leq c^*\|w\|^\frac{4\nu}{2\nu+1}\delta^\frac{2}{2\nu+1}.
\end{equation*}
The obtained convergence rates are comparable with that for the Landweber method in \cite[Theorem 3.2]{HankeNeubauerScherzer:1995}
and SGD for linear inverse problems \cite[Theorem 2.2]{JinLu:2019}. The restriction $O(k^{-(\alpha-\epsilon)})$ is essentially due to
the computational variance, arising from the random choice of the index $i_k$ at each iteration, as the proofs in Section
\ref{ssec:rate} indicate, and for small $\alpha$, the convergence rate can suffer from a significant loss due to the presence of
pronounced computational variance. It is noteworthy that for $\nu>1/2$, the convergence rate is suboptimal, just as
the case of the Landweber method, and thus SGD suffers from a saturation phenomenon. It is an interesting open question
to remove the saturation phenomenon, even in the context of linear inverse problems.
\end{remark}

\section{Convergence of SGD}\label{sec:conv}

In this section, we analyze the convergence of Algorithm \ref{alg:sgd}, separately for exact and noisy data, including
the proof of Theorem \ref{thm:conv-noisy}. We need one preliminary result from \cite{HankeNeubauerScherzer:1995}. The
result is a useful characterization of an exact solution $x^*$ \cite[Proposition 2.1]{HankeNeubauerScherzer:1995}.
\begin{lemma}\label{lem:linear}
Let Assumptions \ref{ass:sol}(i) and (ii) be fulfilled. Then the following statements hold.
\begin{itemize}
  \item[$\rm(i)$] The following inequalities hold:
  \begin{equation*}
  \frac{1}{1+\eta}\|F'(x)(x-\tilde x)\|\leq \|F(x)-F(\tilde x)\|\leq \frac{1}{1-\eta}\|F'(x)(x-\tilde x)\|.
  \end{equation*}
  \item[$\rm(ii)$]If $x^*$ is a solution of \eqref{eqn:nonlin}, then any other solution $\tilde x^*$ satisfies
$x^*-\tilde x^* \in \mathcal{N}(F'(x^*))$, and vice versa.
\end{itemize}
\end{lemma}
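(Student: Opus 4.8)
The statement to prove is Lemma~\ref{lem:linear}, which records two standard consequences of the tangential cone condition \eqref{eqn:tangential-cone}. Part (i) is an immediate reformulation of \eqref{eqn:tangential-cone}: starting from the triangle inequality, one has
\[
\|F'(x)(x-\tilde x)\| \le \|F(x)-F(\tilde x)\| + \|F(x)-F(\tilde x)-F'(x)(x-\tilde x)\|.
\]
The second term on the right is bounded by $\eta\|F(x)-F(\tilde x)\|$ (applying \eqref{eqn:tangential-cone} with the roles of $x$ and $\tilde x$ interchanged, which is legitimate since the condition is assumed for all pairs), giving $\|F'(x)(x-\tilde x)\| \le (1+\eta)\|F(x)-F(\tilde x)\|$, i.e.\ the first inequality. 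For the second inequality, again by the triangle inequality,
\[
\|F(x)-F(\tilde x)\| \le \|F'(x)(x-\tilde x)\| + \eta\|F(x)-F(\tilde x)\|,
\]
and since $\eta<\tfrac12<1$ we may move the last term to the left and divide by $1-\eta>0$ to obtain $\|F(x)-F(\tilde x)\| \le (1-\eta)^{-1}\|F'(x)(x-\tilde x)\|$. This disposes of part (i) with no real obstacle; the only subtlety is keeping track of whether \eqref{eqn:tangential-cone} is being used with $(x,\tilde x)$ or $(\tilde x,x)$, and here the clean statement ``for any $x,\tilde x\in X$'' makes both available.

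For part (ii), suppose $x^*$ and $\tilde x^*$ are both solutions of \eqref{eqn:nonlin}, so $F(x^*)=F(\tilde x^*)=y^\dag$. Applying the first inequality of part (i) with $x=x^*$, $\tilde x=\tilde x^*$ gives $\|F'(x^*)(x^*-\tilde x^*)\| \le (1+\eta)\|F(x^*)-F(\tilde x^*)\| = 0$, hence $x^*-\tilde x^*\in\mathcal N(F'(x^*))$. For the converse, suppose $x^*$ solves \eqref{eqn:nonlin} and $v:=x^*-\tilde x^*\in\mathcal N(F'(x^*))$; we must show $\tilde x^*$ is also a solution. Using the \emph{second} inequality of part (i) with $x=x^*$ and $\tilde x=\tilde x^*$ yields
\[
\|F(x^*)-F(\tilde x^*)\| \le (1-\eta)^{-1}\|F'(x^*)(x^*-\tilde x^*)\| = (1-\eta)^{-1}\|F'(x^*)v\| = 0,
\]
so $F(\tilde x^*)=F(x^*)=y^\dag$, i.e.\ $\tilde x^*$ solves \eqref{eqn:nonlin}. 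This is where the two-sided estimate in part (i) earns its keep: the forward direction needs the upper bound on $\|F'(x)(x-\tilde x)\|$, while the converse needs the upper bound on $\|F(x)-F(\tilde x)\|$.

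I do not expect any genuine difficulty in this proof; it is purely a matter of unwinding \eqref{eqn:tangential-cone} carefully. The one point that deserves a sentence of care is the symmetry issue: \eqref{eqn:tangential-cone} as stated linearizes $F$ around $\tilde x$, so in the first chain of inequalities above one applies it with the base point being $x$ (the point at which the derivative is evaluated in the claimed estimate), which is exactly the form $\|F(\tilde x)-F(x)-F'(x)(\tilde x-x)\|\le\eta\|F(\tilde x)-F(x)\|$ — valid by the ``any $x,\tilde x$'' quantifier. Everything else is the triangle inequality and the elementary fact that $1-\eta>0$. Since this is quoted verbatim from \cite[Proposition~2.1]{HankeNeubauerScherzer:1995}, one could alternatively just cite it, but the short self-contained argument above is cheap enough to include.
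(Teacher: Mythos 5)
Your proof is correct and is exactly the standard argument: the paper itself gives no proof of this lemma, simply quoting it from \cite[Proposition 2.1]{HankeNeubauerScherzer:1995}, and your self-contained derivation (triangle inequality plus the tangential cone condition with the roles of $x$ and $\tilde x$ swapped so that the derivative is evaluated at $x$, then applying the resulting two-sided estimate to pairs of solutions for part (ii)) is precisely the argument in that reference. No issues.
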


The next result gives an (almost) monotonicity result of the iterates in the mean
squared norm. This result is crucial for proving the regularizing property of the iterates under
\textit{a priori} stopping rules.
\begin{prop}\label{prop:mono-exact}
Let Assumptions \ref{ass:sol}(i)-(ii) and \ref{ass:stepsize}(i) be fulfilled. Then for any solution $x^*$ to problem \eqref{eqn:nonlin}, there holds
\begin{equation*}
  \E[\|x^*-x_{k+1}^\delta\|^2] - \E[\|x^*-x_{k}^\delta\|^2]
   \leq  -(1-2\eta)\eta_k\E[\|F(x_k^\delta)-y^\delta\|^2] + { 2\eta_k(1+\eta)\delta\E[\|F(x_k^\delta)-y^\delta\|^2]^\frac{1}{2}}.
\end{equation*}
\end{prop}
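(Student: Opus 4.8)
The plan is to run the standard one-step energy estimate for stochastic gradient iterations: expand $\|x^*-x_{k+1}^\delta\|^2$ using the recursion \eqref{eqn:sgd}, take the conditional expectation $\E[\cdot\mid\mathcal F_k]$ over the random index $i_k$, then pass to the full expectation. Writing $x^*-x_{k+1}^\delta=(x^*-x_k^\delta)+\eta_k F_{i_k}'(x_k^\delta)^*(F_{i_k}(x_k^\delta)-y_{i_k}^\delta)$ and squaring produces the usual three terms. Since $i_k$ is drawn uniformly from $\{1,\dots,n\}$ and $x_k^\delta$ is $\mathcal F_k$-measurable, $\E[F_{i_k}'(x_k^\delta)^*(F_{i_k}(x_k^\delta)-y_{i_k}^\delta)\mid\mathcal F_k]=\tfrac1n\sum_i F_i'(x_k^\delta)^*(F_i(x_k^\delta)-y_i^\delta)$, which by the definition of the stacked operator $F$ and the $n^{-1/2}$ scaling equals $F'(x_k^\delta)^*(F(x_k^\delta)-y^\delta)$; hence the cross term becomes $2\eta_k\langle F'(x_k^\delta)(x^*-x_k^\delta),\,F(x_k^\delta)-y^\delta\rangle$.

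For the quadratic term I would bound $\E[\|F_{i_k}'(x_k^\delta)^*(F_{i_k}(x_k^\delta)-y_{i_k}^\delta)\|^2\mid\mathcal F_k]\le \max_i\sup_x\|F_i'(x)\|^2\cdot\tfrac1n\sum_i\|F_i(x_k^\delta)-y_i^\delta\|^2=\max_i\sup_x\|F_i'(x)\|^2\,\|F(x_k^\delta)-y^\delta\|^2$, and then invoke Assumption \ref{ass:stepsize}(i) to absorb one factor of $\eta_k$, giving $\eta_k^2(\cdots)\le\eta_k\|F(x_k^\delta)-y^\delta\|^2$.

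The substantive step is the cross term, handled via the tangential cone condition. Using $F(x^*)=y^\dag$ and \eqref{eqn:tangential-cone} with $x=x^*$, $\tilde x=x_k^\delta$, write $F'(x_k^\delta)(x^*-x_k^\delta)=(y^\dag-F(x_k^\delta))-r_k$ with $\|r_k\|\le\eta\|y^\dag-F(x_k^\delta)\|\le\eta(\delta+\|F(x_k^\delta)-y^\delta\|)$. Inserting $\pm y^\delta$ gives $\langle y^\dag-F(x_k^\delta),F(x_k^\delta)-y^\delta\rangle=\langle y^\dag-y^\delta,F(x_k^\delta)-y^\delta\rangle-\|F(x_k^\delta)-y^\delta\|^2$; applying Cauchy--Schwarz with $\|y^\dag-y^\delta\|=\delta$ to this inner product and to $\langle r_k,F(x_k^\delta)-y^\delta\rangle$, and collecting terms, yields
\begin{equation*}
\langle F'(x_k^\delta)(x^*-x_k^\delta),F(x_k^\delta)-y^\delta\rangle\le -(1-\eta)\|F(x_k^\delta)-y^\delta\|^2+(1+\eta)\delta\|F(x_k^\delta)-y^\delta\|.
\end{equation*}
Combining this with the quadratic bound in the conditional estimate gives $-2(1-\eta)\eta_k+\eta_k=-(1-2\eta)\eta_k$ in front of $\|F(x_k^\delta)-y^\delta\|^2$, with $\eta<\tfrac12$ from Assumption \ref{ass:sol}(ii) ensuring this coefficient is negative.

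Finally I would take the full expectation by the tower property and apply Jensen's inequality, $\E[\|F(x_k^\delta)-y^\delta\|]\le\E[\|F(x_k^\delta)-y^\delta\|^2]^{1/2}$, to the residual term, which produces exactly the stated bound. The main obstacle is purely the bookkeeping in the cross-term step: correctly converting $F'(x_k^\delta)(x^*-x_k^\delta)$ into the computable residual $F(x_k^\delta)-y^\delta$ via the tangential cone inequality while tracking the two noise contributions (from $y^\dag-y^\delta$ and from $r_k$), so that the negative quadratic term survives; the rest is routine.
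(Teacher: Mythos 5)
Your proposal is correct and follows essentially the same route as the paper: the same one-step expansion of $\|x^*-x_{k+1}^\delta\|^2$, absorption of the quadratic term via the step-size condition $\eta_k\max_i\sup_x\|F_i'(x)\|^2<1$, the identical tangential-cone decomposition $F'(x_k^\delta)(x^*-x_k^\delta)=(y^\dag-F(x_k^\delta))-r_k$ (the paper writes it componentwise before taking the conditional expectation, which is only a cosmetic difference), and the final Jensen/Cauchy--Schwarz step. No gaps.
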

\begin{proof}
Completing the square gives
\begin{align*}
    \|x^*-x_{k+1}^\delta\|^2 - \|x^*-x_{k}^\delta\|^2 = & 2\langle x_k^\delta - x^*,x_{k+1}^\delta-x_k^\delta\rangle + \|x_{k+1}^\delta-x_k^\delta\|^2.
\end{align*}
By the definition of the SGD iterate $x_{k}^\delta$ in \eqref{eqn:sgd}, there holds
\begin{align*}
   & \|x^*-x_{k+1}^\delta\|^2 - \|x^*-x_{k}^\delta\|^2\\
  = & -2\eta_k\langle x_k^\delta-x^*,  F_{i_k}'(x_k^\delta)^*(F_{i_k}(x_k^\delta)-y_{i_k}^\delta)\rangle + \eta_k^2\| F_{i_k}'(x_k^\delta)^*(F_{i_k}(x_k^\delta)-y_{i_k}^\delta)\|^2 \\
  = & -2\eta_k\langle F_{i_k}'(x_k^\delta)(x_k^\delta-x^*), F_{i_k}(x_k^\delta)-y_{i_k}^\delta\rangle + \eta_k^2\|F_{i_k}'(x_k^\delta)^*(F_{i_k}(x_k^\delta)-y_{i_k}^\delta)\|^2.
\end{align*}
Next we split the factor $-F'_{i_k}(x_k^\delta)(x_k^\delta-x^*)$ into two terms
\begin{equation*}
  F_{i_k}'(x_k^\delta)(x_k^\delta-x^*) =  (F_{i_k}(x_k^\delta)- F_{i_k}(x^*)) + (F_{i_k}(x^*)-F_{i_k}(x_k^\delta)-F_{i_k}'(x_k^\delta)(x^* - x_k^\delta)).
\end{equation*}
Combining the last two identities yields
\begin{align*}
    &\|x^*-x_{k+1}^\delta\|^2 - \|x^*-x_{k}^\delta\|^2\\
  = & -2\eta_k\langle F_{i_k}(x_k^\delta)-F_{i_k}(x^*), F_{i_k}(x_k^\delta)-y_{i_k}^\delta\rangle +  {\eta_k^2\|F_{i_k}'(x_k^\delta)^*(F_{i_k}(x_k^\delta)-y_{i_k}^\delta)\|^2}\\
   & - 2\eta_k\langle y_{i_k}^\dag-F_{i_k}(x_k^\delta)-F_{i_k}'(x_k^\delta)(x^*-x_k^\delta),F_{i_k}(x_k^\delta)-y_{i_k}^\delta\rangle \\
  = & -2\eta_k\langle F_{i_k}(x_k^\delta)-y_{i_k}^\delta, F_{i_k}(x_k^\delta)-y_{i_k}^\delta\rangle +  {\eta_k^2\|F_{i_k}'(x_k^\delta)^*(F_{i_k}(x_k^\delta)-y_{i_k}^\delta)\|^2}\\
   & -2\eta_k\langle y_{i_k}^\delta-y_{i_k}^\dag , F_{i_k}(x_k^\delta)-y_{i_k}^\delta\rangle\\
   & - 2\eta_k\langle y_{i_k}^\dag-F_{i_k}(x_k^\delta)-F_{i_k}'(x_k^\delta)(x^*-x_k^\delta),F_{i_k}(x_k^\delta)-y_{i_k}^\delta\rangle \\
 \leq  & -\eta_k\langle F_{i_k}(x_k^\delta)-y_{i_k}^\delta, F_{i_k}(x_k^\delta)-y_{i_k}^\delta\rangle
    -2\eta_k\langle y_{i_k}^\delta-y_{i_k}^\dag, F_{i_k}(x_k^\delta)-y_{i_k}^\delta\rangle\\
   & - 2\eta_k\langle y_{i_k}^\dag-F_{i_k}(x_k^\delta)-F_{i_k}'(x_k^\delta)(x^*-x_k^\delta),F_{i_k}(x_k^\delta)-y_{i_k}^\delta\rangle,
\end{align*}
where the inequality follows from the condition $\eta_k\|F_{i_k}'(x)\|^2<1$ in Assumption \ref{ass:stepsize}(i).
Thus, by the measurability of the iterate $x_k$ with respect to the filtration
$\mathcal{F}_{k}$ and the Cauchy-Schwarz inequality, we have
\begin{align*}
   &{\E[\|x^*-x_{k+1}^\delta\|^2 - \|x^*-x_{k}^\delta\|^2|\mathcal{F}_{k}]} \\
    \leq &-\frac{\eta_k}{n}\sum_{i=1}^n\langle F_{i}(x_k^\delta)-y_{i}^\delta,F_{i}(x_k^\delta)-y_{i}^\delta\rangle-2\frac{\eta_k}{n}\sum_{i=1}^n\langle y_{i}^\delta-y_{i}^\dag, F_{i}(x_k^\delta)-y_{i}^\delta\rangle\\
   & - 2\frac{\eta_k}{n}\sum_{i=1}^n\langle y_{i}^\dag-F_{i}(x_k^\delta)-F_{i}'(x_k^\delta)(x^*-x_k^\delta),F_{i}(x_k^\delta)-y_{i}^\delta\rangle \\
  = & -\eta_k\|F(x_k^\delta)-y^\delta\|^2 - 2\eta_k\langle y^\delta-y^\dag, F(x_k^\delta)-y^\delta\rangle- 2\eta_k\langle y^\dag-F(x_k^\delta)-F'(x_k^\delta)(x^*-x_k^\delta),F(x_k^\delta)-y^\delta\rangle \\
  \leq & -\eta_k\|F(x_k^\delta)-y^\delta\|^2 + 2\eta_k\delta\|F(x_k^\delta)-y^\delta\| + 2\eta_k\eta\|F(x_k^\delta)-y^\dag\|\|F(x_k^\delta)-y^\delta\|\\
  \leq & \eta_k\|F(x_k^\delta)-y^\delta\|\big((2\eta-1)\|F(x_k^\delta)-y^\delta\| + 2(1+\eta)\delta\big),
\end{align*}
where the second inequality follows from Assumption \ref{ass:sol}(i) and the triangle inequality.
Last, by taking full conditional of the inequality yields 
\begin{align*}
   &\E[\|x^*-x_{k+1}^\delta\|^2] - \E[\|x^*-x_{k}^\delta\|^2]\\
   \leq &  -(1-2\eta)\eta_k\E[\|F(x_k^\delta)-y^\delta\|^2]
   +  2\eta_k(1+\eta)\delta\E[\|F(x_k^\delta)-y^\delta\|^2]^\frac{1}{2}.
\end{align*}
This completes the proof of the proposition.
\end{proof}

Below we analyze the convergence of the SGD iterate for exact and noisy data separately.

\subsection{Convergence for exact data}

The next result is an immediate consequence of Proposition \ref{prop:mono-exact}.

\begin{corollary}\label{cor:mono}
Let Assumptions \ref{ass:sol}(i)-(ii) and \ref{ass:stepsize}(i) be fulfilled. Then for the exact data $y^\dag$,
any solution $x^*$ to problem \eqref{eqn:nonlin} satisfies
\begin{align*}
   \E[\|x^*-x_{k+1}\|^2] - \E[\|x^*-x_{k}\|^2] &\leq -(1-2\eta)\eta_k\E[\|F(x_k)-y^\dag\|^2],\\
   \sum_{k=1}^\infty \eta_k\E[\|F(x_k)-y^\dag\|^2] &\leq \frac{1}{1-2\eta}\|x^*-x_1\|^2.
\end{align*}
\end{corollary}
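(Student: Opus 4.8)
The plan is to derive Corollary~\ref{cor:mono} directly from Proposition~\ref{prop:mono-exact} by specializing to the exact data $y^\delta = y^\dag$, i.e., $\delta = 0$. With $\delta = 0$, the second term on the right-hand side of the inequality in Proposition~\ref{prop:mono-exact} vanishes, so the iterate $x_k$ (the SGD iterate for exact data) immediately satisfies
\[
  \E[\|x^*-x_{k+1}\|^2] - \E[\|x^*-x_{k}\|^2] \leq -(1-2\eta)\eta_k\E[\|F(x_k)-y^\dag\|^2],
\]
which is the first assertion. Note that $1-2\eta > 0$ by Assumption~\ref{ass:sol}(ii), so the right-hand side is nonpositive; in particular the sequence $\E[\|x^*-x_k\|^2]$ is monotonically nonincreasing.

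For the second assertion, the plan is a standard telescoping argument. Summing the first inequality over $k = 1, \ldots, N$ gives
\[
  (1-2\eta)\sum_{k=1}^{N}\eta_k\E[\|F(x_k)-y^\dag\|^2] \leq \E[\|x^*-x_1\|^2] - \E[\|x^*-x_{N+1}\|^2] \leq \|x^*-x_1\|^2,
\]
where the last step uses $\E[\|x^*-x_{N+1}\|^2] \geq 0$ and that $x_1$ is deterministic so $\E[\|x^*-x_1\|^2] = \|x^*-x_1\|^2$. Since each summand is nonnegative and the partial sums are uniformly bounded by $(1-2\eta)^{-1}\|x^*-x_1\|^2$, letting $N \to \infty$ yields the claimed bound on the infinite series.

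There is essentially no obstacle here — the corollary is a routine consequence of the proposition, and the only points requiring a word of care are that $1-2\eta$ is strictly positive (so the sign of the telescoped inequality is correct and division is legitimate) and that the nonnegativity of $\E[\|x^*-x_{N+1}\|^2]$ lets us discard that term. One might optionally remark that the finiteness of $\sum_k \eta_k\E[\|F(x_k)-y^\dag\|^2]$ together with $\sum_k \eta_k = \infty$ from Assumption~\ref{ass:stepsize}(i) forces $\liminf_{k\to\infty}\E[\|F(x_k)-y^\dag\|^2] = 0$, which is the first step toward the convergence statement for exact data, but that inference belongs to the subsequent development rather than to the proof of the corollary itself.
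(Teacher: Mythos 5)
Your proposal is correct and is exactly the argument the paper intends: the paper presents this corollary as an immediate consequence of Proposition \ref{prop:mono-exact} (set $\delta=0$ to kill the second term, then telescope and use $\E[\|x^*-x_{N+1}\|^2]\geq 0$ together with $1-2\eta>0$). Nothing is missing.
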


\begin{remark}
Corollary \ref{prop:mono-exact} does not impose any condition on the step sizes $\eta_k$, and allows
constant step size. The mean squared error $\E[\|x_k-x^*\|^2]$ is monotonically decreasing, but
the expected residual $\E[\|F(x_k)-y^\dag\|^2]$ is not necessarily monotone. The latter is due to the random
choice of the index $i_k$: the estimated stochastic gradient is not guaranteed to be a descent direction.
\end{remark}

The next result shows that the sequence $\{x_k\}_{k\geq 1}$ is a Cauchy sequence.
\begin{lemma}\label{lem:Cauchy}
Let Assumptions \ref{ass:sol}(i)-(ii) and \ref{ass:stepsize}(i) be fulfilled. Then for the exact data $y^\dag$,
the sequence $\{x_k\}_{k\geq 1}$ generated by Algorithm \ref{alg:sgd} is a Cauchy sequence.
\end{lemma}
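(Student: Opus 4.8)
The plan is to mimic the classical argument of Hanke–Neubauer–Scherzer for the deterministic Landweber method, but carried out in the mean-squared sense. The first step is to fix $j<k$, choose an intermediate index $l$ with $j\le l\le k$, and use the elementary identity
\begin{equation*}
  \|x_j-x_k\|^2 \le 2\|x_j-x_l\|^2 + 2\|x_l-x_k\|^2 = 2\langle x_j-x_l, x_j-x_l\rangle + 2\langle x_l-x_k, x_l-x_k\rangle,
\end{equation*}
and then expand $\langle x_m-x_l, x_m-x_l\rangle = \langle x_m-x_l, x_m-x^*\rangle + \langle x_m-x_l, x^*-x_l\rangle$ for $m\in\{j,k\}$, where $x^*$ is any fixed solution of \eqref{eqn:nonlin}. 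Taking expectations and invoking Corollary \ref{cor:mono}, the sequence $a_k := \E[\|x^*-x_k\|^2]$ is monotonically decreasing and bounded below, hence convergent; so the ``boundary'' contributions $\E[\langle x_j-x_l, x_j-x^*\rangle]$ etc.\ can be controlled by $a_j - a_l$ and $a_l - a_k$, which tend to $0$ as $j,l,k\to\infty$. The remaining cross term $\E[\langle x_m-x^*, x^*-x_l\rangle]$ is the genuinely new ingredient.

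The second step is to bound that inner product. Telescoping the SGD recursion with exact data gives
\begin{equation*}
  x_m - x_l = -\sum_{i=l}^{m-1}\eta_i F_{i_i}'(x_i)^*(F_{i_i}(x_i)-y_{i_i}^\dag),
\end{equation*}
so $\langle x_m-x^*, x^*-x_l\rangle$ can be expanded by pairing $x^*-x_l = \sum_{i=l}^{m-1}(x_i - x_{i+1})$ ... more directly, one writes $|\E[\langle x_m - x^*, x^*-x_l\rangle]|$ and inserts the telescoped increments for both factors, then uses the measurability of $x_i$ with respect to $\mathcal{F}_i$ together with the tangential cone condition via Lemma \ref{lem:linear}(i), exactly as in Proposition \ref{prop:mono-exact}: each term $\eta_i\langle F_{i_i}'(x_i)(x_i - x^*), F_{i_i}(x_i)-y_{i_i}^\dag\rangle$ is, after conditioning, comparable to $\eta_i\|F(x_i)-y^\dag\|^2$ up to the tangential-cone factor $(1+\eta)$. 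This reduces everything to the tail sum $\sum_{i\ge l}\eta_i \E[\|F(x_i)-y^\dag\|^2]$, which by the second estimate in Corollary \ref{cor:mono} is the tail of a convergent series and hence $\to 0$ as $l\to\infty$. Assembling the pieces yields $\E[\|x_j-x_k\|^2]\to 0$ as $j,k\to\infty$, so $\{x_k\}$ is Cauchy in the (complete) sense of the $L^2$-norm of $X$-valued random variables, which is the assertion.

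The main obstacle I anticipate is handling the cross term $\E[\langle x_m-x^*, x^*-x_l\rangle]$ cleanly: in the deterministic proof one has genuine monotonicity of the residual and a clean Cauchy–Schwarz bookkeeping, but here the residual $\E[\|F(x_k)-y^\dag\|^2]$ is not monotone (as the remark after Corollary \ref{cor:mono} stresses), so the estimate must be routed entirely through the summability $\sum_k \eta_k\E[\|F(x_k)-y^\dag\|^2]<\infty$ and the convergence of $a_k=\E[\|x^*-x_k\|^2]$, using repeatedly that for indices between $l$ and $k$ the conditional expectation of each SGD increment reproduces the full gradient $\frac1n\sum_i F_i'(x_i)^*(F_i(x_i)-y_i^\dag)$. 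One has to be careful that all conditioning is done at the correct level of the filtration so that $x_i$ may be pulled out as $\mathcal{F}_i$-measurable; once that bookkeeping is set up, the rest is a Cauchy–Schwarz and tail-of-a-convergent-series estimate. A secondary point is that ``Cauchy sequence'' here should be understood in mean square (equivalently, $(x_k)$ is Cauchy in $L^2(\Omega;X)$), and it is worth stating this explicitly so the subsequent extraction of the limit $x^*$ in the proof of Theorem \ref{thm:conv-noisy} is on firm footing.
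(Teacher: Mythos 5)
Your overall architecture matches the paper's (which follows Hanke--Neubauer--Scherzer, going back to McCormick--Rodrigue): split $\E[\|x_j-x_k\|^2]$ through an intermediate index, reduce the resulting cross terms to telescoped SGD increments, condition on $\mathcal{F}_i$ to recover the full residual $F(x_i)-y^\dag$, and invoke the summability $\sum_k\eta_k\E[\|F(x_k)-y^\dag\|^2]<\infty$ from Corollary \ref{cor:mono}. But there is a genuine gap at the one place where the argument is delicate. When you telescope $e_m-e_l=\sum_{i=l}^{m-1}\eta_i F_{i_i}'(x_i)^*(y_{i_i}^\dag-F_{i_i}(x_i))$ and pair it against $e_l=x_l-x^*$, each summand contains the factor $F_{i_i}'(x_i)(x_l-x^*)$, not $F_{i_i}'(x_i)(x_i-x^*)$. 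Since Lemma \ref{lem:linear}(i) controls $\|F'(x)(x-\tilde x)\|$ by $\|F(x)-F(\tilde x)\|$ only when the linearization point is one of the two arguments, you must split $x_l-x^*=(x_l-x_i)+(x_i-x^*)$. Your sketch accounts only for the second piece, which indeed yields $(1+\eta)\eta_i\E[\|F(x_i)-y^\dag\|^2]$ after Cauchy--Schwarz. The first piece produces $\E[\|F(x_i)-F(x_l)\|^2]^{1/2}\leq \E[\|F(x_i)-y^\dag\|^2]^{1/2}+\E[\|F(x_l)-y^\dag\|^2]^{1/2}$, and the factor $\E[\|F(x_l)-y^\dag\|^2]^{1/2}$ cannot be absorbed into the summable series for an \emph{arbitrary} intermediate index $l$ --- precisely because, as you note yourself, the expected residual is not monotone.

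The missing idea is the specific choice of the intermediate index: $\ell$ must be taken to minimize $\E[\|F(x_i)-y^\dag\|^2]$ over all $i$ between the two endpoints, as in \eqref{eqn:minimal-res}. With that choice, $\E[\|F(x_\ell)-y^\dag\|^2]^{1/2}\leq\E[\|F(x_i)-y^\dag\|^2]^{1/2}$ for every $i$ in the range, the troublesome product collapses to $2\eta_i\E[\|F(x_i)-y^\dag\|^2]$, and both cross terms are bounded by $3(1+\eta)$ times a tail of the convergent series, which is exactly how the paper concludes. A brute-force alternative --- expanding $\E[\|e_m-e_l\|^2]$ as a double sum over increments and applying Cauchy--Schwarz termwise --- also fails, since it leads to $\big(\sum_i\eta_i\E[\|F(x_i)-y^\dag\|^2]^{1/2}\big)^2$, which need not be finite when $\sum_i\eta_i=\infty$. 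Everything else in your plan --- the filtration bookkeeping, the use of the monotone convergence of $\E[\|e_k\|^2]$ to control the boundary terms, and the reading of ``Cauchy'' in the mean-square sense --- is consistent with the paper's proof.
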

\begin{proof}
The argument below follows closely \cite[Theorem 2.3]{HankeNeubauerScherzer:1995}, which can be
traced back to \cite{McCormickRodrigue:1975}. Let $x^*$ be any solution to problem \eqref{eqn:nonlin}, and let $e_k:=x_k-x^*$. By
Corollary \ref{prop:mono-exact}, $\E[\|e_k\|^2]$ is monotonically decreasing to some $\epsilon\geq 0$.
Next we show that the sequence $\{x_k\}_{k\ge 1}$ is actually a Cauchy sequence. First we note that
$\E[\langle\cdot,\cdot\rangle]$ defines an inner product. For any $j\geq k$, choose an index $\ell$
with $j\geq \ell\geq k$ such that
\begin{equation}\label{eqn:minimal-res}
  \E[\|y^\dag - F(x_\ell)\|^2]\leq \E[\|y^\dag - F(x_i)\|^2], \quad \forall k\leq i \leq j.
\end{equation}
In light of the triangle inequality
\begin{equation*}
  \E[\|e_j-e_k\|^2]^\frac{1}{2} \leq \E[\|e_j-e_\ell\|^2]^\frac{1}{2} + \E[\|e_\ell-e_k\|^2]^\frac{1}{2},
\end{equation*}
and the trivial identities
\begin{equation}\label{eqn:err-exact}
\begin{aligned}
  \E[\|e_j-e_\ell\|^2] & = 2\E[\langle e_\ell-e_j,e_\ell\rangle] + \E[\|e_j\|^2] - \E[\|e_\ell\|^2],\\
  \E[\|e_\ell-e_k\|^2] & = 2\E[\langle e_\ell-e_k,e_\ell\rangle] + \E[\|e_k\|^2] - \E[\|e_\ell\|^2],
\end{aligned}
\end{equation}
it suffices to prove that both $\E[\|e_j-e_\ell\|^2]$ and $\E[\|e_\ell-e_k\|^2]$ on the left hand side tend to zero as $k\to \infty$.
For $k\to \infty$, the last two terms on each of the right hand side of \eqref{eqn:err-exact} tends to $\epsilon-\epsilon=0$, by
the monotone convergence of $\E[\|e_k\|^2]$ to $\epsilon$.
Next we show that the term $\E[\langle e_\ell-e_k,e_\ell\rangle]$ also tends to zero as $k\to\infty$.
Actually, by the definition of the SGD iterate $x_k$ in \eqref{eqn:sgd}, we have
\begin{equation*}
  e_\ell - e_k = \sum_{i=k}^{\ell-1}(e_{i+1}-e_i) = {\sum_{i=k}^{\ell-1}\eta_iF_{i_i}'(x_i)^*(y_{i_i}^\dag-F_{i_i}(x_i))}.
\end{equation*}
By the triangle inequality, we can bound $\E[\langle e_\ell-e_k,e_\ell\rangle]$ by
\begin{align*}
  |\E[\langle e_\ell-e_k,e_\ell\rangle]| & = |\E[\sum_{i=k}^{\ell-1}\langle \eta_iF_{i_i}'(x_i)^*(y_{i_i}^\dag-F_{i_i}(x_i)),e_\ell\rangle]|\\
  & \leq \sum_{i=k}^{\ell-1}\eta_i|\E[\langle F_{i_i}'(x_i)^*(y_{i_i}^\dag-F_{i_i}(x_i)),e_\ell\rangle]|\\
  & = \sum_{i=k}^{\ell-1}\eta_i|\E[\langle y_{i_i}^\dag-F_{i_i}(x_i),F_{i_i}'(x_i)(x^*-x_i+x_i-x_\ell)\rangle]|.
\end{align*}
Then by the Cauchy-Schwarz inequality and triangle inequality, we obtain
\begin{align*}
 |\E[\langle e_\ell-e_k,e_\ell\rangle]|
  & \leq \sum_{i=k}^{\ell-1}\eta_i|\E[\langle y^\dag-F(x_i),F'(x_i)(x^*-x_i+x_i-x_\ell)\rangle]|\\
  & \leq \sum_{i=k}^{\ell-1}\eta_i\E[\|y^\dag-F(x_i)\|^2]^\frac{1}{2}\E[\|F'(x_i)(x^*-x_i+x_i-x_\ell)\|^2]^\frac{1}{2}\\
  & \leq \sum_{i=k}^{\ell-1}\eta_i\E[\|y^\dag-F(x_i)\|^2]^\frac{1}{2}\E[\|F'(x_i)(x^*-x_i)\|^2]^\frac{1}{2}\\
   &\quad +\sum_{i=k}^{\ell-1}\eta_i\E[\|y^\dag-F(x_i)\|^2]^\frac{1}{2}\E[\|F'(x_i)(x_i-x_\ell)\|^2]^\frac{1}{2}
   :={\rm I} + {\rm II}.
\end{align*}
By Assumption \ref{ass:sol}(ii) and Lemma \ref{lem:linear}(i), we have
\begin{equation*}
\|F'(x)(x-\tilde x)\|\leq (1+\eta)\|F(x)-F(\tilde x)\|.
\end{equation*}
Substituting this inequality into the term $\rm I$ leads to
\begin{align*}
  {\rm I} & \leq (1+\eta)\sum_{i=k}^{\ell-1}\eta_i\E[\|y^\dag-F(x_i)\|^2]^\frac{1}{2}\E[\|F(x^*)-F(x_i)\|^2]^\frac{1}{2}
           = (1+\eta)\sum_{i=k}^{\ell-1}\eta_i\E[\|y^\dag-F(x_i)\|^2].
\end{align*}
Likewise, by the triangle inequality and the choice of the index $\ell$ in \eqref{eqn:minimal-res},
\begin{align*}
  {\rm II} & \leq (1+\eta)\sum_{i=k}^{\ell-1}\eta_i\E[\|y^\dag-F(x_i)\|^2]^\frac{1}{2}\E[\|F(x_\ell)-F(x_i)\|^2]^\frac{1}{2}\\
          & \leq (1+\eta)\sum_{i=k}^{\ell-1}\eta_i\E[\|y^\dag-F(x_i)\|^2] + (1+\eta)\sum_{i=k}^{\ell-1}\eta_i\E[\|y^\dag-F(x_i)\|^2]^\frac{1}{2}\E[\|F(x_\ell)-y^\dag\|^2]^\frac{1}{2}\\
          & \leq 2(1+\eta)\sum_{i=k}^{\ell-1}\eta_i\E[\|y^\dag-F(x_i)\|^2].
\end{align*}
The last two estimates together imply
\begin{align*}
  |\E[\langle e_\ell-e_k,e_\ell\rangle]| & \leq 3(1+\eta)\sum_{i=k}^{\ell-1}\eta_i\E[\|y^\dag-F(x_i)\|^2].
\end{align*}
Similarly, one can deduce
\begin{equation*}
  |\E[\langle e_j-e_\ell,e_\ell\rangle]| \leq 3(1+\eta)\sum_{i=\ell}^{j-1}\eta_i\E[\|y^\dag-F(x_i)\|^2].
\end{equation*}
These two estimates and Corollary \ref{prop:mono-exact} imply that the right hand sides of \eqref{eqn:err-exact}
tend to zero as $k\to\infty$. Thus, the sequence $\{e_k\}_{k\geq 1}$ and also $\{x_k\}_{k\geq1}$ are Cauchy sequences.
\end{proof}

\begin{lemma}\label{lemma:res}
Let Assumptions \ref{ass:sol}(i)-(ii) and \ref{ass:stepsize}(i) be fulfilled. Then for the exact data $y^\dag$, there holds
\begin{equation*}
  \lim_{k\to\infty} \E[\|F(x_k)-y^\dag\|^2] = 0.
\end{equation*}
\end{lemma}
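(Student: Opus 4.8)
The plan is to combine the summability bound already established in Corollary~\ref{cor:mono} with the Cauchy property of the iterates from Lemma~\ref{lem:Cauchy}, mirroring the argument in \cite[Theorem~2.3]{HankeNeubauerScherzer:1995} but carried out in the mean squared norm. The point to keep in mind is that, unlike $\E[\|x_k-x^*\|^2]$, the expected residual $\E[\|F(x_k)-y^\dag\|^2]$ need not be monotone, so a $\liminf$ argument followed by a stability estimate is needed rather than a direct monotonicity argument.

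\emph{Step 1 (a vanishing subsequence).} Picking any solution $x^*$ of \eqref{eqn:nonlin} (for instance $x^*=x^\dag$, since $F(x^\dag)=y^\dag$), Corollary~\ref{cor:mono} gives $\sum_{k=1}^\infty \eta_k\,\E[\|F(x_k)-y^\dag\|^2]\le \frac{1}{1-2\eta}\|x^*-x_1\|^2<\infty$, while $\sum_{k=1}^\infty\eta_k=\infty$ by Assumption~\ref{ass:stepsize}(i). Hence $\liminf_{k\to\infty}\E[\|F(x_k)-y^\dag\|^2]=0$: otherwise there would be $\varepsilon>0$ and $N$ with $\E[\|F(x_k)-y^\dag\|^2]\ge\varepsilon$ for all $k\ge N$, forcing $\sum_k\eta_k\,\E[\|F(x_k)-y^\dag\|^2]\ge\varepsilon\sum_{k\ge N}\eta_k=\infty$, a contradiction.

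\emph{Step 2 (from $\liminf$ to $\lim$).} By Lemma~\ref{lem:linear}(i) and the boundedness of $F'$ from Assumption~\ref{ass:sol}(i), one has, almost surely, $\|F(x_k)-F(x_m)\|\le\frac{1}{1-\eta}\|F'(x_k)(x_k-x_m)\|\le c\,\|x_k-x_m\|$ with $c=\frac{1}{1-\eta}\sup_x\|F'(x)\|$; squaring and taking expectations yields $\E[\|F(x_k)-F(x_m)\|^2]^{\frac12}\le c\,\E[\|x_k-x_m\|^2]^{\frac12}$. Combining this with the triangle inequality for the norm $\E[\|\cdot\|^2]^{\frac12}$ gives
\[
  \E[\|F(x_k)-y^\dag\|^2]^{\frac12}\le c\,\E[\|x_k-x_m\|^2]^{\frac12}+\E[\|F(x_m)-y^\dag\|^2]^{\frac12}.
\]
Now fix $\varepsilon>0$. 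Lemma~\ref{lem:Cauchy} supplies an $N$ with $\E[\|x_k-x_m\|^2]<\varepsilon^2$ for all $k,m\ge N$, and Step~1 supplies some $m\ge N$ with $\E[\|F(x_m)-y^\dag\|^2]<\varepsilon^2$; hence $\E[\|F(x_k)-y^\dag\|^2]^{\frac12}\le(c+1)\varepsilon$ for every $k\ge N$. Since $\varepsilon>0$ is arbitrary, $\lim_{k\to\infty}\E[\|F(x_k)-y^\dag\|^2]=0$.

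I do not expect a genuine obstacle here: the two essential ingredients (the summability estimate and the Cauchyness of $\{x_k\}$) are already in hand, and the only care needed is to run the stability argument in the $L^2$ sense and to choose the auxiliary index $m\ge N$ simultaneously large enough for the Cauchy estimate and small enough in residual — which is precisely where the $\liminf$ from Step~1 enters. A minor technical point is the implicit appeal to the existence of a solution $x^*$ in Corollary~\ref{cor:mono}, but this is harmless since the reference solution $x^\dag$ is itself a solution of \eqref{eqn:nonlin}.
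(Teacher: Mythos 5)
Your proposal is correct and uses exactly the same two ingredients as the paper's proof: the summability bound $\sum_k\eta_k\E[\|F(x_k)-y^\dag\|^2]<\infty$ from Corollary~\ref{cor:mono} together with the divergence $\sum_k\eta_k=\infty$, and the Lipschitz estimate $\|F(x)-F(\tilde x)\|\le (1-\eta)^{-1}\sup_x\|F'(x)\|\,\|x-\tilde x\|$ combined with the Cauchy property of $\{x_k\}$ from Lemma~\ref{lem:Cauchy}. The only (immaterial) difference is organizational: the paper first deduces that $\E[\|F(x_k)-y^\dag\|^2]$ converges because $\{F(x_k)\}$ is Cauchy in the mean squared norm and then rules out a positive limit by contradiction, whereas you first extract $\liminf_k\E[\|F(x_k)-y^\dag\|^2]=0$ and then upgrade it to a full limit via the same Cauchy estimate.
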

\begin{proof}
By Lemma \ref{lem:Cauchy}, $\{x_k\}_{k\geq 1}$ is a Cauchy sequence. By Assumption \ref{ass:stepsize}(i),
$\sup_x\|F'(x)\|\leq \eta_0^{-\frac12}$. Further, for any $x,\tilde x\in X$, there holds
\begin{equation*}
  \|F(x)-F(\tilde x)\| \leq (1-\eta)^{-1}\|F'(x)(x-\tilde x)\|\leq (1-\eta)^{-1}\eta_0^{-\frac12}\|x-\tilde x\|.
\end{equation*}
Thus, $\{F(x_k)-y^\dag\}_{k\geq 1}$ is a Cauchy sequence, and $\E[\|F(x_k)-y^\dag\|^2]$
converges. Now we proceed by contradiction, and assume that $\lim_{k\to\infty}\E[\|F(x_k)-y^\dag\|^2]>0$.
Then there exist some $\epsilon>0$ and $k^*\in \mathbb{N}$, such that $\E[\|F(x_k)-y^\dag\|^2]\geq \epsilon$
for all $k\geq k^*$. Hence, by Assumption \ref{ass:stepsize}(i),
\begin{equation*}
  \sum_{k=1}^\infty \eta_k\E[\|F(x_k)-y^\dag\|^2] \geq \sum_{k=k^*}^\infty \eta_k\E[\|F(x_k)-y^\dag\|^2] \geq \epsilon\sum_{k=k^*}^\infty\eta_k =\infty,
\end{equation*}
which contradicts the inequality
\begin{equation*}
\sum_{k=1}^\infty \eta_k\E[\|F(x_k)-y^\dag\|^2] <\infty,
\end{equation*}
from Corollary \ref{cor:mono}. This completes the proof of the lemma.
\end{proof}

Now we can state the convergence of Algorithm \ref{alg:sgd} for the exact data $y^\dag$.
Below $x^\dag$ denotes the unique solution to problem \eqref{eqn:nonlin} of minimal
distance to $x_1$.
\begin{theorem}[Convergence for exact data]\label{thm:conv-exact}
Let Assumptions \ref{ass:sol}(i)-(ii) and \ref{ass:stepsize}(i) be fulfilled. Then for the exact data $y^\dag$, the sequence
$\{x_k\}_{k\geq1}$ by Algorithm \ref{alg:sgd} converges to a solution $x^*$ of problem \eqref{eqn:nonlin}:
\begin{equation*}
  \lim_{k\to\infty}\E[\|x_k-x^*\|^2] = 0.
\end{equation*}
 Further, if
$\mathcal{N}(F'(x^\dag))\subset\mathcal{N}(F'(x))$, then
\begin{equation*}
  \lim_{k\to\infty}\E[\|x_k -x^\dag\|^2]=0.
\end{equation*}
\end{theorem}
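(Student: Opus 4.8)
The plan is to combine the Cauchy property from Lemma \ref{lem:Cauchy}, the residual decay from Lemma \ref{lemma:res}, and Lemma \ref{lem:linear}, in essentially the same way as \cite[Theorem 2.4]{HankeNeubauerScherzer:1995}, but carried out consistently in the mean-squared sense. First I would observe that since $\E[\langle\cdot,\cdot\rangle]$ is an inner product on the space of square-integrable $X$-valued random variables (equivalently, $L^2(\Omega;X)$ is complete), Lemma \ref{lem:Cauchy} yields a limit random variable $x^*\in L^2(\Omega;X)$ with $\lim_{k\to\infty}\E[\|x_k-x^*\|^2]=0$. The next step is to show that $x^*$ solves \eqref{eqn:nonlin}, i.e. $F(x^*)=y^\dag$ almost surely: by the Lipschitz bound $\|F(x)-F(\tilde x)\|\leq(1-\eta)^{-1}\eta_0^{-1/2}\|x-\tilde x\|$ established in the proof of Lemma \ref{lemma:res}, the map $x\mapsto F(x)-y^\dag$ is Lipschitz, hence $\E[\|F(x^*)-y^\dag\|^2]^{1/2}\leq \E[\|F(x^*)-F(x_k)\|^2]^{1/2}+\E[\|F(x_k)-y^\dag\|^2]^{1/2}\to 0$ by the mean-square convergence $x_k\to x^*$ and Lemma \ref{lemma:res}. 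Therefore $F(x^*)=y^\dag$ a.s., so $x^*$ is (a.s.) a solution of \eqref{eqn:nonlin}; one then fixes a deterministic representative, which is legitimate since any two solutions differ by an element of $\mathcal N(F'(x^*))$ by Lemma \ref{lem:linear}(ii).

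For the second assertion, under the extra hypothesis $\mathcal N(F'(x^\dag))\subset\mathcal N(F'(x))$ for all $x\in X$, I would show the limit coincides with the minimum-norm solution $x^\dag$. Following \cite{HankeNeubauerScherzer:1995}, the key is that the increments of the exact-data iteration lie in $\overline{\mathcal R(F'(x^\dag)^*)}=\mathcal N(F'(x^\dag))^\perp$: indeed
\begin{equation*}
  x_{k+1}-x_k = \eta_k F_{i_k}'(x_k)^*(y_{i_k}^\dag - F_{i_k}(x_k)) = \eta_k F_{i_k}'(x^\dag)^*(R_{x_k}^{i_k})^*(y_{i_k}^\dag-F_{i_k}(x_k))\in\mathcal R(F'(x^\dag)^*),
\end{equation*}
using Assumption \ref{ass:sol}(iii) to pull out $F_i'(x^\dag)^*$ (alternatively, the weaker nullspace inclusion suffices: $\langle x_{k+1}-x_k, v\rangle = \eta_k\langle y_{i_k}^\dag - F_{i_k}(x_k), F_{i_k}'(x_k)v\rangle = 0$ for every $v\in\mathcal N(F'(x^\dag))$, since $\mathcal N(F'(x^\dag))\subset\mathcal N(F_{i_k}'(x_k))$). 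Summing from $1$, we get $x_k - x_1\in\mathcal N(F'(x^\dag))^\perp$ for every realization, and passing to the mean-square limit (the closed subspace $\mathcal N(F'(x^\dag))^\perp$ is preserved under $L^2$-limits) gives $x^* - x_1\in\mathcal N(F'(x^\dag))^\perp$. On the other hand, $x^\dag$ is by definition the solution closest to $x_1$, and by Lemma \ref{lem:linear}(ii) every solution differs from $x^\dag$ by an element of $\mathcal N(F'(x^\dag))$, which forces $x^\dag - x_1\perp\mathcal N(F'(x^\dag))$ as well; since $x^* - x^\dag\in\mathcal N(F'(x^\dag))$ and simultaneously $x^*-x^\dag = (x^*-x_1)-(x^\dag-x_1)\in\mathcal N(F'(x^\dag))^\perp$, we conclude $x^* = x^\dag$, whence $\lim_{k\to\infty}\E[\|x_k - x^\dag\|^2]=0$.

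The main obstacle is a conceptual one: making sure the deterministic subspace argument of \cite{HankeNeubauerScherzer:1995} is applied correctly at the level of random variables. Concretely, one must check that the pathwise membership $x_k(\omega)-x_1\in\mathcal N(F'(x^\dag))^\perp$ (which holds for every $\omega$ because each increment lies in that subspace regardless of the random indices drawn) is inherited by the $L^2$-limit; this is immediate since a closed linear subspace of $X$ yields a closed linear subspace of $L^2(\Omega;X)$. A secondary point needing care is that $x^*$ is, a priori, only an almost-sure solution, so one should argue that $\|F(x^*)-y^\dag\| = 0$ holds for the chosen representative and that the minimum-norm characterization is unaffected by modification on a null set; all of this is routine once the mean-square convergence and Lipschitz continuity of $F$ are in hand.
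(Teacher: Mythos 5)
Your proposal is correct and follows essentially the same route as the paper: completeness under the $\E[\langle\cdot,\cdot\rangle]$ inner product plus Lemma \ref{lem:Cauchy} give the limit, Lemma \ref{lemma:res} identifies it as a solution, and the nullspace/orthogonal-complement argument (increments lie in $\mathcal{N}(F'(x^\dag))^\perp$, hence so does $x^*-x_1$, while $x^*-x^\dag\in\mathcal{N}(F'(x^\dag))$ since both are solutions) forces $x^*=x^\dag$. Your write-up is in fact slightly more explicit than the paper's on the measure-theoretic points (closedness of the subspace under $L^2$-limits, the a.s.\ nature of the limit solution), and you correctly note that only the nullspace inclusion, not Assumption \ref{ass:sol}(iii), is needed for the increment argument.
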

\begin{proof}
Since $\{x_k\}_{k\geq1}$ is a Cauchy sequence, there exists a limit of $\{x_k\}_{k\geq 1}$, denoted by $x^*$. Further,
$x^*$ is a solution, since the residual $\E[\|y^\dag-F(x_k)\|^2]$ converges to zero as $k\to\infty$, in view of Lemma \ref{lemma:res}.

Note that problem \eqref{eqn:nonlin} has a unique solution of minimal distance to the initial guess $x_1$ that satisfies
\begin{equation*}
  x^\dag - x_1\in\mathcal{N}(F'(x^\dag))^\perp;
\end{equation*}
see Lemma \ref{lem:linear}. If {$\mathcal{N}(F'(x^\dag))\subset \mathcal{N}(F'(x_k))$}
 for all $k=1,2,\ldots$, then clearly,
\begin{equation*}
  x_k-x_1\in\mathcal{N}(F'(x^\dag))^\perp,\quad k =1,2,\ldots.
\end{equation*}
Consequently,
\begin{equation*}
  x^\dag - x^* = x^\dag - x_1 + x_1-x^*\in\mathcal{N}(F'(x^\dag))^\perp.
\end{equation*}
This and the inequalities from Lemma \ref{lem:linear}(i) imply $x^*=x^\dag$, completing the proof.
\end{proof}

\begin{remark}
The convergence result in Theorem \ref{thm:conv-exact} does not impose any constraint on the step size schedule $\{\eta_k\}_{k=1}^\infty$
directly, apart from the fact that it should not decay too fast to zero. In particular, it can be taken to be a
constant step size. This result slightly improves that in \cite[Theorem 2.1]{JinLu:2019}, where a decreasing step size
is required {\rm(}for linear inverse problems{\rm)}. The improvement is achieved by exploiting the quadratic structure of objective
function {\rm(}and the tangential cone condition in Assumption \ref{ass:sol}(i){\rm)}, whereas in \cite{JinLu:2019} the consistency
is derived by means of bias-variance decomposition. 
\end{remark}

\subsection{Convergence for noisy data}
The next result gives the pathwise stability of the SGD iterate $x_k^\delta$ with respect to
the noise level $\delta$ (at $\delta=0$), i.e., along each realization in the filtration $\mathcal{F}_k$.
\begin{lemma}\label{lem:conti-noise}
Let Assumption \ref{ass:sol}(i) be fulfilled. For any fixed $k\in \mathbb{N}$ and any path
$(i_1,\ldots,i_{k-1})\in\mathcal{F}_k$, let $x_k$ and $x_k^\delta$ be the SGD iterates along the path for exact
data $y^\dag$ and noisy data $y^\delta$, respectively. Then
\begin{equation*}
  \lim_{\delta\to 0^+}\|x_k^\delta-x_k\|=0.
\end{equation*}
\end{lemma}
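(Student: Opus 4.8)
The plan is to prove the statement by induction on the iteration index $k$, using the explicit SGD recursion \eqref{eqn:sgd} for a \emph{fixed} path $(i_1,\ldots,i_{k-1})$. For $k=1$ the claim is trivial since $x_1^\delta = x_1$ is independent of $\delta$ by the algorithm. For the inductive step, assume $\|x_k^\delta - x_k\|\to 0$ as $\delta\to 0^+$ along the given path, and in particular that $x_k^\delta$ stays bounded for $\delta$ small; we must show $\|x_{k+1}^\delta - x_{k+1}\|\to 0$. Writing the recursion for both exact and noisy data with the same index $i_k$, we get
\begin{equation*}
  x_{k+1}^\delta - x_{k+1} = (x_k^\delta - x_k) - \eta_k\big(F_{i_k}'(x_k^\delta)^*(F_{i_k}(x_k^\delta)-y_{i_k}^\delta) - F_{i_k}'(x_k)^*(F_{i_k}(x_k)-y_{i_k}^\dag)\big).
\end{equation*}
I would then split the bracketed difference by inserting intermediate terms: first replace $y_{i_k}^\delta$ by $y_{i_k}^\dag$, contributing $\eta_k F_{i_k}'(x_k^\delta)^*(y_{i_k}^\dag - y_{i_k}^\delta)$, which is bounded in norm by $\eta_k\|F_{i_k}'(x_k^\delta)\|\,\|y_{i_k}^\delta - y_{i_k}^\dag\| \le \eta_k\,(\max_i\sup_x\|F_i'(x)\|)\cdot\sqrt{n}\,\delta \to 0$; then treat $F_{i_k}'(x_k^\delta)^*(F_{i_k}(x_k^\delta)-y_{i_k}^\dag) - F_{i_k}'(x_k)^*(F_{i_k}(x_k)-y_{i_k}^\dag)$, which tends to $0$ because $F_{i_k}$ and $F_{i_k}'$ are continuous (Assumption \ref{ass:sol}(i)) and $x_k^\delta \to x_k$, so $F_{i_k}(x_k^\delta)\to F_{i_k}(x_k)$ and $F_{i_k}'(x_k^\delta)^*\to F_{i_k}'(x_k)^*$ in the relevant (strong operator) sense, with the residual factors $F_{i_k}(x_k^\delta)-y_{i_k}^\dag$ uniformly bounded. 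Combining the three pieces and the induction hypothesis $\|x_k^\delta - x_k\|\to 0$ gives $\|x_{k+1}^\delta - x_{k+1}\|\to 0$, completing the induction.

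The mild technical point to be careful about is the continuity of the map $x\mapsto F_i'(x)^*(F_i(x)-y_i^\dag)$: Assumption \ref{ass:sol}(i) gives continuity of $F_i$ and of the derivative $F_i'$ (as a map into bounded operators), from which one deduces that $F_i'(x_k^\delta)^* v \to F_i'(x_k)^* v$ for each fixed $v$, and then one uses boundedness of $\|F_i'(x)\|$ together with $F_i(x_k^\delta)-y_i^\dag \to F_i(x_k)-y_i^\dag$ to pass to the limit in the product. None of these steps is deep — the argument is essentially a continuity-plus-induction argument — and the only ``obstacle'' is bookkeeping the finitely many terms along the fixed path and noting that all constants ($\eta_k$, the uniform bound on $\|F_i'\|$, $n$) are finite and independent of $\delta$, so the $O(\delta)$ data-perturbation term genuinely vanishes. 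Since $k$ is fixed and finite, no summability or decay condition on the step sizes is needed here, which is why the lemma only invokes Assumption \ref{ass:sol}(i).
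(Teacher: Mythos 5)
Your proposal is correct and follows essentially the same route as the paper: induction on $k$ along the fixed path, subtracting the two recursions, splitting the bracketed difference by adding and subtracting an intermediate term, and then invoking the continuity and boundedness of $F_{i_k}$ and $F_{i_k}'$ from Assumption \ref{ass:sol}(i) together with the $O(\delta)$ data-perturbation bound. The only difference is cosmetic — the paper first swaps $F_{i_k}'(x_k^\delta)^*$ for $F_{i_k}'(x_k)^*$ whereas you first swap $y_{i_k}^\delta$ for $y_{i_k}^\dag$ — which does not change the substance of the argument.
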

\begin{proof}
We prove the assertion by mathematical induction. The assertion holds
trivially for $k=1$, since $x_1^\delta = x_1$. Now suppose that it
holds for all indices up to $k$ and any path $(i_1,\ldots,i_{k-1})\in\mathcal{F}_k$.
Next, by the definitions of the SGD iterates $x_k$ and $x_k^\delta$, cf. \eqref{eqn:sgd}:
\begin{align*}
   x_{k+1}        &= x_k - \eta_k F_{i_k}'(x_k)^*(F_{i_k}(x_k)-y_{i_k}),\\
   x_{k+1}^\delta &= x_k^\delta - \eta_k F_{i_k}'(x_k^\delta)^*(F_{i_k}(x_k^\delta)-y_{i_k}^\delta).
\end{align*}
Therefore, for any fixed path $(i_1,\ldots,i_k)$, we have
\begin{align*}
   x_{k+1}^\delta -x_{k+1} &= (x_k^\delta-x_k) - \eta_k \left(F_{i_k}'(x_k^\delta)^*(F_{i_k}(x_k^\delta)-y_{i_k}^\delta)-F_{i_k}'(x_k)^*(F_{i_k}(x_k)-y_{i_k})\right)\\
     & = (x_k^\delta -x_k)-\eta_k \big((F_{i_k}'(x_k^\delta)^* - F_{i_k}'(x_k)^*)(F_{i_k}(x_k^\delta)-y_{i_k}^\delta) \\
     & \quad +F_{i_k}'(x_k)^*((F_{i_k}(x_k^\delta)-y_{i_k}^\delta)-(F_{i_k}(x_k)-y_{i_k}))\big).
\end{align*}
Thus, by triangle inequality,
\begin{align*}
  \|x_{k+1}^\delta-x_{k+1}\| &\leq \|x_k^\delta -x_k\|+\eta_k \|F_{i_k}'(x_k^\delta)^* - F_{i_k}'(x_k)^*\|\|F_{i_k}(x_k^\delta)-y_{i_k}^\delta\| \\
     & \quad +\|F_{i_k}'(x_k)^*\|\|((F_{i_k}(x_k^\delta)-y_{i_k}^\delta)-(F_{i_k}(x_k)-y_{i_k}))\|.
\end{align*}
Then the desired assertion follows from the continuity of the operators $F_i$ and $F_i'$ in Assumption \ref{ass:sol}(i)
and the induction hypothesis.
\end{proof}

Now we can give the proof of Theorem \ref{thm:conv-noisy}. This result gives the regularizing property of
SGD under \textit{a priori} stopping rules.
\begin{proof}[Proof of Theorem \ref{thm:conv-noisy}]
Let $\{\delta_n\}_{n\geq 1}\subset\mathbb{R}$ be a sequence converging to zero, and let $y_n:=y^{\delta_n}$
be a corresponding sequence of noisy data. For each pair $(\delta_n,y_n)$, we denote by
$k_n=k(\delta_n)$ the stopping index. Without loss of generality, we may assume that
$k_n$ increases strictly monotonically with $n$.

By Proposition \ref{prop:mono-exact} and Young's inequality $2ab\leq \epsilon a^2 + \epsilon^{-1} b^2$,
with the choice $a=\E[\|F(x_k^\delta)-y^\delta\|^2]^\frac{1}{2}$, $b=(1+\eta)\delta$ and $\epsilon=1-2\eta>0$:
\begin{align*}
    & \E[\|x^*-x_{k+1}^\delta\|^2] - \E[\|x^*-x_{k}^\delta\|^2] \\
   \leq & -(1-2\eta)\eta_k\E[\|F(x_k^\delta)-y^\delta\|^2] +  2\eta_k(1+\eta)\delta\E[\|F(x_k^\delta)-y^\delta\|^2]^\frac{1}{2}
   \leq \frac{(1+\eta)^2}{1-2\eta}\eta_k\delta^2.
\end{align*}
Then for any $m<n$, summing the above inequality with $\delta=\delta_n$ from $k_m$ to $k_n-1$
(since $k_n$ is strictly increasing with $n$ by assumption) and applying the triangle inequality lead to
\begin{align*}
  \E[\|x_{k_n}^{\delta_n} -x^*\|^2] &\leq \E[\|x_{k_m}^{\delta_n}-x^*\|^2] + \frac{(1+\eta)^2}{1-2\eta}\delta_n^2\sum_{j=k_m}^{k_n-1}\eta_j\\
     & \leq 2\E[\|x_{k_m}^{\delta_n}-x_{k_m}\|^2] + 2\E[\|x_{k_m}-x^*\|^2] + \frac{(1+\eta)^2}{1-2\eta}\delta_n^2\sum_{j=1}^{k_n-1}\eta_j.
\end{align*}
By Theorem \ref{thm:conv-exact} we can fix $m$ so large that the term $\E[\|x_{k_m}-x^*\|^2]$ is sufficiently small.
Since the index $k_m$ is fixed, we may apply Lemma \ref{lem:conti-noise} to conclude that the term
$\E[\|x_{k_m}^{\delta_n}-x_{k_m}\|^2]$ must go to zero as $n\to\infty$. The last term also tends to zero under the
condition on the index $k_n$, i.e., $\lim_{n\to\infty}\delta_n^2\sum_{i=1}^{k_n}\eta_i=0$. This completes
the proof of the first assertion. The case for $\mathcal{N}(F'(x^\dag))\subset\mathcal{N}(F'(x))$ follows similarly
as Theorem \ref{thm:conv-exact}.
\end{proof}

\begin{remark}
In practice, the domain $\mathcal{D}(F)\subset X$ is often not the whole space $X$, especially for
parameter identifications for partial differential equation, where box constraints arise naturally
due to the physical restrictions. When the domain $\mathcal{D}(F)\subset X$ is a closed convex set,
it can be incorporated into the algorithm by a projection operator $P$ \cite{Vasin:1988}, i.e.,
\begin{equation*}
  x_{k+1}^\delta = P(x_k^\delta - \eta_k F_{i_k}'(x_k^\delta)^*(F_{i_k}(x_k^\delta)-y_{i_k}^\delta)).
\end{equation*}
This step does not change much
the overall analysis of the regularizing property, since the projection operator is a contraction, i.e.,
\begin{equation*}
  \|P(x) - P(\tilde x) \|\leq \|x - \tilde x\|.
\end{equation*}
Further, we note that the Hilbert space $Y$ may differ for
each operator $F_i$, and the analysis in this section still applies with minor modifications.
\end{remark}

\section{Convergence rates}\label{sec:rate}
In this section, we prove convergence rates for SGD under Assumptions \ref{ass:sol}, \ref{ass:stepsize}(ii) and
\ref{ass:stoch}. The main results are given in Theorem \ref{thm:err-total-ex} and \ref{thm:err-total} for
exact and noisy data, respectively. These results represent the second main contributions
of the work. We shall employ some shorthand notation. Let
\begin{equation*}
  K_i=F_i'(x^\dag), \quad K = \frac{1}{\sqrt{n}}\left(\begin{array}{c}K_1\\ \vdots \\ K_n\end{array}\right) \quad \mbox{and}\quad B=K^*K = \frac{1}{n}\sum_{i=1}^nK_i^*K_i.
\end{equation*}Further, we frequently adopt the shorthand notation
\begin{equation}\label{eqn:Pi-B}
\Pi_{j}^k(B)=\prod_{i=j}^k(I-\eta_iB),
\end{equation}
(and the convention $\Pi_j^k(B)=I$ for $j>k$), and to shorten lengthy expressions, we define for
$s\geq0$ and $j\in\mathbb{N}$,
\begin{equation*}
 \tilde s = s+\tfrac12 \quad\mbox{ and }\quad\phi_j^s = \|B^s\Pi_{j+1}^k(B)\|.
\end{equation*}
Also recall that the operator $R_{x}$ denotes the diagonal operator $R_x = \mathrm{diag}(R_x^1,\ldots,R_x^n)$ from
Assumption \ref{ass:sol}(iii). The rest of this section is structured as follows. In view of the standard bias-variance
decomposition \eqref{eqn:bias-var}, we first derive two important recursion formulas for the mean $\|B^s(x^\dag-
\E[x_k^\delta])\|$ and variance $\E[\|B^s(x_k^\delta-\E[x_k^\delta])\|^2]$, for any $s\geq0$, in Sections \ref{ssec:mean} and
\ref{ssec:residual}, respectively, and then use the recursions to derive the desired convergence rates under
\textit{a priori} parameter choice in Section \ref{ssec:rate}.

\subsection{Recursion on the mean}\label{ssec:mean}
In this part, we derive a recursion for the upper bound on the error of the mean $\E[x_k^\delta]$ of the SGD iterate $x_k^\delta$.
We shall need the following elementary bound on the linearization error under Assumption \ref{ass:sol}(ii).
\begin{lemma}\label{lem:R}
Under Assumption \ref{ass:sol}(iii), there holds
\begin{equation*}
  \|F(x)-F(x^\dag)-K(x-x^\dag)\| \leq \frac{c_R}{2}\|K(x-x^\dag)\|\|x-x^\dag\|.
\end{equation*}
Further, under Assumption \ref{ass:stoch}, there holds
\begin{equation*}
  \E[\|F(x_k^\delta)-F(x^\dag)-K(x_k^\delta-x^\dag)\|^2]^\frac12 \leq \frac{c_R}{1+\theta}\E[\|K(x_k^\delta-x^\dag)\|^2]^\frac12\E[\|x_k^\delta-x^\dag\|^2]^\frac{\theta}{2}.
\end{equation*}
\end{lemma}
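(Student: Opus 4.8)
The plan is to reduce both inequalities to the integral (mean–value) form of the linearization error and estimate under the integral sign, exploiting the structural identity $F'(z)=R_zK$ from Assumption~\ref{ass:sol}(iii). Put $z_t = x^\dag + t(x-x^\dag)$, $t\in[0,1]$. Since each $F_i$ is continuously differentiable with bounded derivative (Assumption~\ref{ass:sol}(i)), the map $t\mapsto F_i(z_t)$ is $C^1$ with derivative $F_i'(z_t)(x-x^\dag)$, so the componentwise fundamental theorem of calculus and the identity $F'(z_t)=R_{z_t}K$ give
\begin{equation*}
  F(x)-F(x^\dag) = \int_0^1 F'(z_t)(x-x^\dag)\,\d t = \int_0^1 R_{z_t}K(x-x^\dag)\,\d t .
\end{equation*}
Subtracting $K(x-x^\dag)=\int_0^1 K(x-x^\dag)\,\d t$ yields the working identity
\begin{equation*}
  F(x)-F(x^\dag)-K(x-x^\dag) = \int_0^1 (R_{z_t}-I)K(x-x^\dag)\,\d t .
\end{equation*}

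For the first (deterministic) bound I would take norms inside the Bochner integral, use $\|R_{z_t}-I\|\le c_R\|z_t-x^\dag\| = c_R\,t\,\|x-x^\dag\|$ from Assumption~\ref{ass:sol}(iii), factor out $\|K(x-x^\dag)\|$, and integrate $\int_0^1 t\,\d t=\tfrac12$; this gives exactly $\tfrac{c_R}{2}\|K(x-x^\dag)\|\|x-x^\dag\|$. For the second (stochastic) bound I would apply the same identity with $x=x_k^\delta$, take the mean–square norm $\E[\|\cdot\|^2]^{1/2}$, and move it inside the $t$-integral by the generalized Minkowski (integral) inequality, to obtain
\begin{equation*}
  \E\big[\|F(x_k^\delta)-F(x^\dag)-K(x_k^\delta-x^\dag)\|^2\big]^{\frac12}
  \le \int_0^1 \E\big[\|(I-R_{z_t})K(x_k^\delta-x^\dag)\|^2\big]^{\frac12}\,\d t .
\end{equation*}
Then for each fixed $t$ I apply Assumption~\ref{ass:stoch} with $G(x)=K(x-x^\dag)=F'(x^\dag)(x-x^\dag)$, keeping track that $z_t$ lies on the segment from $x^\dag$ to $x_k^\delta$ so that the relevant radius is $\|z_t-x^\dag\|=t\|x_k^\delta-x^\dag\|$; this produces an extra factor $t^\theta$ in front of $c_R\,\E[\|x_k^\delta-x^\dag\|^2]^{\theta/2}\E[\|K(x_k^\delta-x^\dag)\|^2]^{1/2}$, and $\int_0^1 t^\theta\,\d t=\tfrac1{1+\theta}$ yields the stated constant $\tfrac{c_R}{1+\theta}$ (consistently, $\theta=1$ matches the deterministic bound).

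The only genuine subtlety is the exchange of the mean–square expectation with the integral over $t$: the integrand is a Bochner-integrable $Y^n$-valued function (continuity of $F_i'$ makes $t\mapsto R_{z_t}K(x_k^\delta-x^\dag)$ continuous, and $z_t$ is $\mathcal{F}_k$-measurable since $x_k^\delta$ is), so the pathwise triangle inequality $\|\int_0^1 g_t\,\d t\|\le\int_0^1\|g_t\|\,\d t$ combined with Minkowski's inequality for the $L^2$-norm in the probability variable justifies the step. Everything else — the componentwise FTC, the operator identity $F'(z_t)=R_{z_t}K$, and the two one-line applications of the nonlinearity estimates — is routine. If one prefers to avoid tracking the $t^\theta$ decay, applying Assumption~\ref{ass:stoch} as stated for each $z_t$ still closes the argument with the (slightly larger) constant $c_R$ in place of $\tfrac{c_R}{1+\theta}$, which suffices for all subsequent uses.
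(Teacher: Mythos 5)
Your proposal is correct and follows essentially the same route as the paper: the integral (mean--value) identity $F(x)-F(x^\dag)-K(x-x^\dag)=\int_0^1(R_{z_t}-I)K(x-x^\dag)\,\d t$, the bound $\|R_{z_t}-I\|\le c_R t\|x-x^\dag\|$ with $\int_0^1 t\,\d t=\tfrac12$ for the deterministic estimate, and Minkowski's integral inequality plus Assumption \ref{ass:stoch} with $G(x)=K(x-x^\dag)$ and $\int_0^1 t^\theta\,\d t=\tfrac1{1+\theta}$ for the stochastic one. Your closing remark about the $t^\theta$ factor is well taken --- the paper's own proof likewise reads the assumption as yielding $\E[\|z_t-x^\dag\|^2]^{\theta/2}$ at each $t$, which is exactly the reading needed to obtain the constant $\tfrac{c_R}{1+\theta}$ rather than $c_R$.
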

\begin{proof}
Actually, with $z_t=tx+(1-t)x^\dag$, by the mean value theorem and Assumption \ref{ass:sol}(iii),
\begin{align*}
   &\|F(x)-F(x^\dag)-K(x-x^\dag)\|  \leq \|\int_0^1(F'(z_t)-K)(x-x^\dag){\rm d} t\|\\
     \leq & \int_0^1\|(R_{z_t}-I)K(x-x^\dag)\|{\rm d}t \leq \frac{c_R}{2}\|K(x-x^\dag)\|\|x-x^\dag\|.
\end{align*}
This shows the first estimate. Similarly, under Assumptions \ref{ass:sol}(iii) and \ref{ass:stoch}
with the choice $G(x)=K(x-x^\dag)$,
\begin{align*}
   &\E[\|F(x_k^\delta)-F(x^\dag)-K(x_k^\delta-x^\dag)\|^2]^\frac12
    \leq  \int_0^1\E[\|(R_{z_t}-I)K(x_k^\delta-x^\dag)\|^2]^\frac12{\rm d}t\\
     \leq & c_R\E[\|K(x_k^\delta-x^\dag)\|^2]^\frac12\int_0^1\E[\|z_t-x^\dag\|^2]^\frac{\theta}{2}{\rm d}t
     \leq \frac{c_R}{1+\theta}\E[\|K(x_k^\delta-x^\dag)\|^2]^\frac12\E[\|x_k^\delta-x^\dag\|^2]^\frac{\theta}{2}.
\end{align*}
This completes the proof of the lemma.
\end{proof}

The next result gives a useful representation of the mean $\E[e_k^\delta]$ of the error $e_k^\delta=x_k^\delta-x^\dag$.
\begin{lemma}\label{lem:recursion-mean}
Let Assumption \ref{ass:sol}(iii) be fulfilled. Then for the SGD iterate $x_k^\delta$, the error $e_k^\delta=x_k^\delta-x^\dag$ satisfies
\begin{equation*}
  \E[e_{k+1}^\delta] = \Pi_1^k(B)e_1 + \sum_{j=1}^{k}\eta_{j}\Pi_{j+1}^k(B) K^*(-(y^\dag-y^\delta)+\E[v_{j}]),
\end{equation*}
with the vector $v_k\in Y^n$ given by
\begin{equation}\label{eqn:w}
  v_{k}= -(F(x_k^\delta)-F(x^\dag)-K(x_k^\delta-x^\dag))
   + (I-R_{x_k^\delta}^*)(F(x_k^\delta)-y^\delta) .
\end{equation}
\end{lemma}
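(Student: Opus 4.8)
The plan is to expand the SGD recursion \eqref{eqn:sgd}, take conditional expectations to isolate the ``mean'' dynamics, and then unroll the resulting linear-in-expectation recursion. First I would write the single SGD step as
\begin{equation*}
  x_{k+1}^\delta = x_k^\delta - \eta_k F_{i_k}'(x_k^\delta)^*(F_{i_k}(x_k^\delta)-y_{i_k}^\delta),
\end{equation*}
and take the conditional expectation $\E[\cdot\mid\mathcal{F}_k]$. Since $x_k^\delta$ is $\mathcal{F}_k$-measurable and $i_k$ is drawn uniformly, the conditional expectation of the stochastic gradient equals the full gradient: $\frac1n\sum_{i=1}^n F_i'(x_k^\delta)^*(F_i(x_k^\delta)-y_i^\delta) = F'(x_k^\delta)^*(F(x_k^\delta)-y^\delta)$, using the $n^{-1/2}$ scaling in the definition of $F$. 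Then taking full expectation and using the tower property gives
\begin{equation*}
  \E[x_{k+1}^\delta] = \E[x_k^\delta] - \eta_k\,\E[F'(x_k^\delta)^*(F(x_k^\delta)-y^\delta)].
\end{equation*}

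Next I would rewrite $F'(x_k^\delta)^*(F(x_k^\delta)-y^\delta)$ in terms of the linearized operator $K=F'(x^\dag)$ at the reference solution. Using Assumption \ref{ass:sol}(iii), $F'(x_k^\delta) = R_{x_k^\delta}K$, so $F'(x_k^\delta)^* = K^*R_{x_k^\delta}^*$, and hence
\begin{equation*}
  F'(x_k^\delta)^*(F(x_k^\delta)-y^\delta) = K^*R_{x_k^\delta}^*(F(x_k^\delta)-y^\delta) = K^*(F(x_k^\delta)-y^\delta) - K^*(I-R_{x_k^\delta}^*)(F(x_k^\delta)-y^\delta).
\end{equation*}
Now I would insert the splitting $F(x_k^\delta)-y^\delta = K(x_k^\delta-x^\dag) + (F(x_k^\delta)-F(x^\dag)-K(x_k^\delta-x^\dag)) - (y^\delta-y^\dag)$, which isolates the linear term $Be_k^\delta = K^*K e_k^\delta$. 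Collecting the remaining terms, one gets exactly
\begin{equation*}
  F'(x_k^\delta)^*(F(x_k^\delta)-y^\delta) = Be_k^\delta - K^*(y^\dag-y^\delta) + K^*v_k,
\end{equation*}
with $v_k$ given by \eqref{eqn:w}; this is a direct bookkeeping check that $v_k = -(F(x_k^\delta)-F(x^\dag)-K(x_k^\delta-x^\dag)) + (I-R_{x_k^\delta}^*)(F(x_k^\delta)-y^\delta)$ absorbs precisely the nonlinearity and the $(I-R^*)$ defect. Substituting, and writing $e_k^\delta = x_k^\delta - x^\dag$ (with $e_1 = x_1 - x^\dag$ deterministic), the mean recursion becomes
\begin{equation*}
  \E[e_{k+1}^\delta] = (I-\eta_k B)\E[e_k^\delta] + \eta_k K^*(-(y^\dag-y^\delta) + \E[v_k]).
\end{equation*}

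Finally I would unroll this first-order linear recursion from $j=1$ to $k$. Since $\E[e_1^\delta] = e_1$ is deterministic, iterating gives
\begin{equation*}
  \E[e_{k+1}^\delta] = \Pi_1^k(B)e_1 + \sum_{j=1}^k \eta_j \Pi_{j+1}^k(B) K^*(-(y^\dag-y^\delta) + \E[v_j]),
\end{equation*}
with $\Pi_j^k(B)$ as in \eqref{eqn:Pi-B} and the convention $\Pi_{k+1}^k(B)=I$; this can be confirmed by a one-line induction on $k$. The whole argument is essentially a deterministic manipulation of a linear recursion once the conditional-expectation step has collapsed the stochasticity, so there is no serious obstacle here — the only point requiring care is the algebraic identity expressing $F'(x_k^\delta)^*(F(x_k^\delta)-y^\delta)$ as $Be_k^\delta - K^*(y^\dag-y^\delta) + K^*v_k$, i.e., verifying that the definition \eqref{eqn:w} of $v_k$ is exactly what makes the substitution exact. (Note that Assumption \ref{ass:sol}(iii) is the hypothesis actually invoked, through the factorization $F_i' = R_x^i K_i$; the integral bounds of Lemma \ref{lem:R} are not needed for this representation, only later when $\E[v_j]$ is estimated.)
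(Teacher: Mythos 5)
Your proof is correct and takes essentially the same route as the paper: factor $F'(x_k^\delta)^*=K^*R_{x_k^\delta}^*$ via Assumption \ref{ass:sol}(iii), split off $Be_k^\delta$ and the data error so that $v_k$ absorbs the linearization error and the $(I-R^*)$ defect, pass to the (conditional, then full) expectation, and unroll the linear recursion; the paper merely performs the decomposition componentwise in $i_k$ before averaging, which is immaterial. The only blemish is a sign slip in your intermediate identity: the correct bookkeeping gives $F'(x_k^\delta)^*(F(x_k^\delta)-y^\delta) = Be_k^\delta + K^*(y^\dag-y^\delta) - K^*v_k$ rather than $Be_k^\delta - K^*(y^\dag-y^\delta) + K^*v_k$, and it is the former that, after substituting into $x_{k+1}^\delta = x_k^\delta - \eta_k(\cdots)$, produces exactly the mean recursion $\E[e_{k+1}^\delta]=(I-\eta_kB)\E[e_k^\delta]+\eta_kK^*(-(y^\dag-y^\delta)+\E[v_k])$ that you correctly state and unroll, so the conclusion is unaffected.
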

\begin{proof}
By the definition of the SGD iterate $x_k^\delta$ in \eqref{eqn:sgd}, there holds
\begin{align*}
  e_{k+1}^\delta & = e_k^\delta - \eta_k F_{i_k}'(x_k^\delta)^*(F_{i_k}(x_k^\delta)-y_{i_k}^\delta)\\
   & = e_k^\delta - \eta_k K_{i_k}^*K_{i_k}(x_k^\delta-x^\dag) - \eta_kK_{i_k}^*(y_{i_k}^\dag-y_{i_k}^\delta) \\
    & \quad - \eta_k K_{i_k}^*(F_{i_k}(x_k^\delta)-F_{i_k}(x^\dag)-K_{i_k}(x_k^\delta-x^\dag)) - \eta_k(F_{i_k}'(x_k^\delta)^*-K_{i_k}^*)(F_{i_k}(x_k^\delta)-y_{i_k}^\delta).
\end{align*}
Then by Assumption \ref{ass:sol}(iii),
\begin{equation*}
   F'_{i_k}(x_k^\delta)^*=(R_{x_k^\delta}^{i_k}F_{i_k}'(x^\dag))^*=K_{i_k}^*R_{x_k^\delta}^{i_k*},
\end{equation*}
and consequently,
\begin{align*}
   e_{k+1}^\delta & = e_k^\delta - \eta_k K_{i_k}^*K_{i_k}(x_k^\delta-x^\dag) - \eta_kK_{i_k}^*(y_{i_k}^\dag-y_{i_k}^\delta) + \eta_kK_{i_k}^*v_{k,i_k},
\end{align*}
with the random variable $v_{k,i}$ defined by
\begin{equation}\label{eqn:w_ik}
  v_{k,i} = - (F_{i}(x_k^\delta)-F_{i}(x^\dag)-K_{i}(x_k^\delta-x^\dag)) + (I-R_{x_{k}^\delta}^{i*})(F_{i}(x_k^\delta)-y_{i}^\delta).
\end{equation}
Thus, by the measurability of the iterate $x_k^\delta$ (and thus $e_k^\delta$) with respect to the filtration
$\mathcal{F}_{k}$, the conditional expectation $\E[e_{k+1}^\delta|\mathcal{F}_{k}]$ is given by
\begin{align*}
 \E[e_{k+1}^\delta|\mathcal{F}_{k}]
  & = e_k^\delta - \frac{\eta_k}{n}\sum_{i=1}^n K_{i}^*K_i(x_k^\delta-x^\dag) -\frac{\eta_k}{n}\sum_{i=1}^nK_{i}^*(y_{i}^\dag-y_{i}^\delta) + \frac{\eta_k}{n}\sum_{i=1}^n K_i^* v_{k,i}.
\end{align*}
 Using the definitions of operators $K$, $F$ and $B$ and the random variable $v_k$, we can rewrite
 this identity as
\begin{align*}
  \E[e_{k+1}^\delta|\mathcal{F}_{k}] & = (I-\eta_kB)e_k^\delta - \eta_kK^*(y^\dag-y^\delta) + \eta_kK^* v_k.
\end{align*}
Then taking full conditional, the mean $\E[e_{k}^\delta]$ satisfies
\begin{align*}
  \E[e_{k+1}^\delta]   = (I-\eta_kB)\E[e_k^\delta] - \eta_kK^*(y^\dag-y^\delta)+ \eta_k K^*\E[v_k].
\end{align*}
Thus, applying the recursion repeatedly (and using the notation $\Pi_j^k(B)$ from \eqref{eqn:Pi-B}) yields
\begin{equation*}
  \E[e_{k+1}^\delta] = \Pi_{1}^k(B)e_1^\delta + \sum_{j=1}^{k}\eta_j\Pi_{j+1}^{k}(B) K^*(- (y^\dag-y^\delta)+\E[v_{j}]).
\end{equation*}
This completes the proof of the lemma.
\end{proof}

\begin{remark}
The term $v_k$ in \eqref{eqn:w}
includes both the linearization error $ (F(x_k^\delta)-F(x^\dag)-K(x_k^\delta-x^\dag))$
of the nonlinear operator $F$ and the range invariance of the derivative operator $F'(x)$ in Assumptions
\ref{ass:sol}(ii) and (iii), which is the new contribution when compared with linear inverse problems.
\end{remark}

The next result gives a useful bound on the mean  $\E[v_j]$.
\begin{lemma}\label{lem:bound-w}
Let Assumptions \ref{ass:sol}(i)--(iii) be fulfilled. Then for $v_j$ defined in \eqref{eqn:w} and
$e_j^\delta=x_j^\delta-x^\dag$, there holds
\begin{equation*}
  \|\E[v_j]\|\leq \frac{(3-\eta)c_R}{2(1-\eta)}\E[\|e_j^\delta\|^2]^\frac12\E[\|B^\frac12e_j^\delta\|^2]^\frac12 + c_R\E[\|e_j^\delta\|^2]^\frac12\delta.
\end{equation*}
\end{lemma}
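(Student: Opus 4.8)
The plan is to pass from $\|\E[v_j]\|$ to $\E[\|v_j\|]$ by Jensen's inequality, and then bound $\E[\|v_j\|]$ term by term using the splitting already present in the definition \eqref{eqn:w}: $v_j = -(F(x_j^\delta)-F(x^\dag)-Ke_j^\delta) + (I-R_{x_j^\delta}^*)(F(x_j^\delta)-y^\delta)$, with $e_j^\delta = x_j^\delta - x^\dag$. By the triangle inequality it suffices to estimate each of the two summands.

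First I would treat the linearization term. The first estimate of Lemma \ref{lem:R} gives $\|F(x_j^\delta)-F(x^\dag)-Ke_j^\delta\| \le \tfrac{c_R}{2}\|Ke_j^\delta\|\,\|e_j^\delta\|$ pathwise, so Cauchy--Schwarz (in the inner product $\E[\langle\cdot,\cdot\rangle]$) yields $\E[\|F(x_j^\delta)-F(x^\dag)-Ke_j^\delta\|] \le \tfrac{c_R}{2}\E[\|Ke_j^\delta\|^2]^{1/2}\E[\|e_j^\delta\|^2]^{1/2}$. Here I would use the identity $\|Kz\|^2 = \langle K^*Kz,z\rangle = \langle Bz,z\rangle = \|B^{1/2}z\|^2$ to rewrite this as $\tfrac{c_R}{2}\E[\|B^{1/2}e_j^\delta\|^2]^{1/2}\E[\|e_j^\delta\|^2]^{1/2}$.

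For the second term I would first note that, since the operator norm is invariant under adjoints, Assumption \ref{ass:sol}(iii) gives $\|I-R_{x_j^\delta}^*\| = \|R_{x_j^\delta}-I\| \le c_R\|e_j^\delta\|$ pathwise, hence $\|(I-R_{x_j^\delta}^*)(F(x_j^\delta)-y^\delta)\| \le c_R\|e_j^\delta\|\,\|F(x_j^\delta)-y^\delta\|$. The residual is then controlled by $\|F(x_j^\delta)-y^\delta\| \le \|F(x_j^\delta)-y^\dag\| + \delta$, and by Lemma \ref{lem:linear}(i) (applied with the two arguments taken as $x^\dag$ and $x_j^\delta$, so that $F'$ is evaluated at $x^\dag$, i.e.\ it is $K$) one has $\|F(x_j^\delta)-y^\dag\| = \|F(x^\dag)-F(x_j^\delta)\| \le (1-\eta)^{-1}\|Ke_j^\delta\|$. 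Combining these and applying Cauchy--Schwarz to each resulting product gives $\E[\|(I-R_{x_j^\delta}^*)(F(x_j^\delta)-y^\delta)\|] \le \tfrac{c_R}{1-\eta}\E[\|B^{1/2}e_j^\delta\|^2]^{1/2}\E[\|e_j^\delta\|^2]^{1/2} + c_R\delta\,\E[\|e_j^\delta\|^2]^{1/2}$, again via $\|Kz\| = \|B^{1/2}z\|$.

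Finally I would add the two bounds and collect constants, using $\tfrac{c_R}{2} + \tfrac{c_R}{1-\eta} = \tfrac{(3-\eta)c_R}{2(1-\eta)}$, which reproduces the claimed inequality. I do not anticipate a genuine obstacle here: the argument is a short chain of triangle and Cauchy--Schwarz estimates on top of Lemmas \ref{lem:R} and \ref{lem:linear}. The only point requiring a little care is controlling $\|F(x_j^\delta)-y^\delta\|$ by $\|B^{1/2}e_j^\delta\|$ \emph{without} reintroducing the linearization error (which would spoil the constant); this is precisely what the tangential-cone consequence in Lemma \ref{lem:linear}(i) buys us.
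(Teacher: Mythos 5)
Your proposal is correct and follows essentially the same route as the paper's proof: split $v_j$ into the linearization term and the $(I-R^*)$ term, bound the first via Lemma \ref{lem:R} and Cauchy--Schwarz, bound the second via $\|I-R_{x_j^\delta}^*\|\le c_R\|e_j^\delta\|$ together with the tangential-cone bound $\|F(x_j^\delta)-y^\dag\|\le(1-\eta)^{-1}\|Ke_j^\delta\|$ from Lemma \ref{lem:linear}(i), and add the constants $\tfrac{c_R}{2}+\tfrac{c_R}{1-\eta}=\tfrac{(3-\eta)c_R}{2(1-\eta)}$. The only cosmetic difference is that the paper splits $y^\delta-F(x_j^\delta)$ into $(y^\dag-F(x_j^\delta))+(y^\delta-y^\dag)$ before applying the operator-norm bound, whereas you factor out $\|I-R^*\|$ first; the resulting estimates are identical.
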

\begin{proof}
By the triangle inequality, there holds
\begin{align*}
  \|\E[v_j]\| &\leq \|\E[F(x_j^\delta)-F(x^\dag)-K(x_j^\delta-x^\dag)]\|
   + \|\E[(I-R_{x_j^\delta}^*)(F(x_j^\delta)-y^\delta)]\|:={\rm I}+{\rm II}.
\end{align*}
Next we bound the terms ${\rm I}$ and ${\rm II}$ separately. First for ${\rm I}$, it follows from
Assumption \ref{ass:sol}(iii) and Lemma \ref{lem:R} that
\begin{equation*}
  \|F(x_j^\delta)-F(x^\dag)-K(x_j^\delta-x^\dag)\| \leq \frac{c_R}{2}\|e_j^\delta\|\|Ke_j^\delta\|,
\end{equation*}
and then, the Cauchy-Schwarz inequality implies
\begin{align*}
  {\rm I} & \leq \E[\|F(x_j^\delta)-F(x^\dag)-K(x_j^\delta-x^\dag)\|] \leq \frac{c_R}{2}\E[\|e_j^\delta\|\|Ke_j^\delta\|]
      \leq \frac{c_R}{2}\E[\|e_j^\delta\|^2]^\frac12\E[\|Ke_j^\delta\|^2]^\frac{1}{2}.
\end{align*}
For the second term ${\rm II}$, by the triangle inequality and Lemma \ref{lem:linear} (under
Assumption \ref{ass:sol}(ii)), there holds
\begin{align*}
   \|(I-R_{x_j^\delta}^*)(y^\delta - F(x_j^\delta))\| & \leq \|(I-R_{x_j^\delta}^*)(y^\dag - F(x_j^\delta))\| + \|(I-R_{x_j^\delta}^*)(y^\delta - y^\dag) \|\\
    & \leq \frac{c_R}{1-\eta}\|e_j^\delta\|\|Ke_j^\delta\| + c_R\|e_j^\delta\|\delta.
\end{align*}
Then the triangle inequality and the Cauchy-Schwarz inequality imply
\begin{align*}
  {\rm II} & :=\| \E[(I-R_{x_j^\delta})(y^\delta - F(x_j^\delta))] \|
     \leq \E[\|(I-R_{x_j^\delta})(y^\delta - F(x_j^\delta)) \|]\\
     & \leq \frac{c_R}{1-\eta}\E[\|e_j^\delta\|\|Ke_j^\delta\|] + c_R\E[\|e_j^\delta\|]\delta
      \leq \frac{c_R}{1-\eta}\E[\|e_j^\delta\|^2]^\frac12\E[\|Ke_j^\delta\|^2]^\frac12 + c_R\E[\|e_j^\delta\|^2]^\frac12\delta.
\end{align*}
Combining the preceding estimates with the identity $\|Ke_j^\delta\|=\|B^\frac{1}{2}e_j^\delta\|$ gives the desired bound.
\end{proof}

Last, we present a bound on the error  $\E[e_k^\delta]$ in the weighted norm. The two cases $s=0$ and
$s=\frac12$ will be employed for deriving convergence rates in Section \ref{ssec:rate}.
\begin{theorem}\label{thm:err-mean}
Let Assumption \ref{ass:sol} be fulfilled, and $e_k^\delta=x_{k}^\delta-x^\dag$. Then for any $s\geq 0$, there holds
\begin{align}\label{eqn:recur-mean}
  \|B^s\E[e_{k+1}^\delta]\|\leq & \phi_0^{s+\nu} \|w\| +\sum_{j=1}^k\eta_j\phi_j ^{\tilde s}\Big(\frac{(3-\eta)c_R}{2(1-\eta)}\E[\|e_j^\delta\|^2]^\frac12\E[\|B^\frac12e_j^\delta\|^2]^\frac12 + c_R\E[\|e_j^\delta\|^2]^\frac12\delta + \delta\Big).
\end{align}
\end{theorem}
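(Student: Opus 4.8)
The plan is to start from the closed-form expression for $\E[e_{k+1}^\delta]$ established in Lemma~\ref{lem:recursion-mean}, apply the operator $B^s$, and take norms. Since the initial guess is data-independent, $e_1^\delta=e_1=x_1-x^\dag$, and the triangle inequality gives
\begin{equation*}
  \|B^s\E[e_{k+1}^\delta]\|\leq \|B^s\Pi_1^k(B)e_1\| + \sum_{j=1}^k\eta_j\,\big\|B^s\Pi_{j+1}^k(B)K^*\big\|\,\big(\|y^\dag-y^\delta\|+\|\E[v_j]\|\big).
\end{equation*}
The first term I would handle directly with the source condition in Assumption~\ref{ass:sol}(iv): since $e_1=-B^\nu w$, it equals $\|B^{s+\nu}\Pi_1^k(B)w\|\leq\phi_0^{s+\nu}\|w\|$, which is exactly the leading term in \eqref{eqn:recur-mean}, and $\|y^\dag-y^\delta\|=\delta$.

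The one step that is not purely mechanical is the identity $\|B^s\Pi_{j+1}^k(B)K^*\|=\phi_j^{\tilde s}$, which converts the factor $K^*$ into the extra half power of $B$ recorded in $\tilde s=s+\tfrac12$. Here I would use that $B=K^*K$ and that $\Pi_{j+1}^k(B)$ and $B^{s+1/2}$ are commuting, self-adjoint (nonnegative) functions of $B$: setting $A=B^{s+1/2}\Pi_{j+1}^k(B)$, one has
\begin{equation*}
  \big(B^s\Pi_{j+1}^k(B)K^*\big)\big(B^s\Pi_{j+1}^k(B)K^*\big)^* = B^s\Pi_{j+1}^k(B)\,K^*K\,\Pi_{j+1}^k(B)B^s = A^2,
\end{equation*}
so that $\|B^s\Pi_{j+1}^k(B)K^*\|^2=\|A^2\|=\|A\|^2=(\phi_j^{\tilde s})^2$, the middle equality being $\|A^2\|=\|A\|^2$ for self-adjoint $A$. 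This reduces the bound to $\|B^s\E[e_{k+1}^\delta]\|\leq \phi_0^{s+\nu}\|w\|+\sum_{j=1}^k\eta_j\phi_j^{\tilde s}\big(\delta+\|\E[v_j]\|\big)$.

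It then remains only to insert the estimate for $\|\E[v_j]\|$ from Lemma~\ref{lem:bound-w}, which contributes exactly the two remaining terms $\tfrac{(3-\eta)c_R}{2(1-\eta)}\E[\|e_j^\delta\|^2]^{1/2}\E[\|B^{1/2}e_j^\delta\|^2]^{1/2}$ and $c_R\E[\|e_j^\delta\|^2]^{1/2}\delta$ inside the sum, giving \eqref{eqn:recur-mean}. None of the individual estimates is delicate; the points needing care are the spectral-calculus identity above and the bookkeeping of which $\delta$-term originates from the data error and which from the range-invariance component of $v_j$ in \eqref{eqn:w}. The genuine difficulty lies not in this lemma but downstream: the right-hand side still involves the unknown quantities $\E[\|e_j^\delta\|^2]$ and $\E[\|B^{1/2}e_j^\delta\|^2]$, and closing the resulting coupled recursion (together with the companion variance bound) is deliberately deferred to the induction argument in Section~\ref{ssec:rate}.
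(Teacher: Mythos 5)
Your proposal is correct and follows essentially the same route as the paper: apply $B^s$ to the representation of $\E[e_{k+1}^\delta]$ from Lemma~\ref{lem:recursion-mean}, bound the leading term via the source condition as $\phi_0^{s+\nu}\|w\|$, convert $\|B^s\Pi_{j+1}^k(B)K^*\|$ into $\phi_j^{\tilde s}$ using $B=K^*K$, and insert the bound on $\|\E[v_j]\|$ from Lemma~\ref{lem:bound-w}. The only difference is that you spell out the spectral-calculus identity $\|B^s\Pi_{j+1}^k(B)K^*\|=\|B^{s+1/2}\Pi_{j+1}^k(B)\|$, which the paper uses without comment; your justification of it is correct.
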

\begin{proof}
By Lemma \ref{lem:recursion-mean} and triangle inequality,
\begin{equation*}
  \|B^s\E[e_{k+1}^\delta]\| \leq \|B^s\Pi_{1}^k(B)(x_1-x^\dag)\| + \sum_{j=1}^{k}\eta_j\|B^s\Pi_{j+1}^{k}(B) K^*(\E[v_{j}]-(y^\dag-y^\delta))\|:={\rm I} + \sum_{j=1}^k\eta_j{\rm II}_j.
\end{equation*}
It remains to bound the terms ${\rm I}$ and ${\rm II}_j$. First, by Assumption \ref{ass:sol}(iv),
\begin{align*}
  {\rm I} = \|B^s\Pi_1^k(B)B^\nu w\| \leq \|\Pi_1^k(B)B^{s+\nu} \|\|w\|.
\end{align*}
To bound the terms ${\rm II}_j$, we have
\begin{align*}
  {\rm II}_j \leq \|B^s\Pi_{j+1}^{k}(B) K^*(\E[v_{j}]-(y^\dag-y^\delta))\| \leq \|B^{s+\frac12}\Pi_{j+1}^{k}(B)\|(\|\E[v_{j}]\|+\delta).
\end{align*}
This, Lemma \ref{lem:bound-w} and the shorthand notation $\phi_j^s$ complete the proof of the theorem.
\end{proof}

\begin{remark}
The bound on the mean $\E[e_k^\delta]$ also depends on the variance of the iterate $x_k^\delta$ (via the terms
like $\E[\|e_k^\delta\|^2]$ etc.), which differs substantially from that in the linear case \cite{JinLu:2019}. This is one
of the new phenomena for nonlinear inverse problems. The weighted norm $\|B^s\E[e_k^\delta]\|$ {\rm(}with a weight $B^s${\rm)}
is motivated by the fact that the right hand of the recursion \eqref{eqn:recur-mean} depends actually also on the term $\E[\|K
e_k^\delta\|^2]=\E[\|B^\frac12 e_k^\delta\|^2]$, which corresponds to the case $s=\frac12$. Thus, such an estimate
will be needed to derive the error bounds below. For linear inverse problems, $R_x=I$ and $c_R=0$, and the recursion simplifies to
\begin{align*}
  \|B^s\E[e_{k+1}^\delta]\|\leq \phi_0^{s+\nu}\|w\| + \sum_{j=1}^k\eta_j\phi_j^{\tilde s}\delta,
\end{align*}
where the two terms on the right hand side represent the approximation error and data error, respectively. This relation was
used in \cite{JinLu:2019} for deriving error estimates.
\end{remark}

\subsection{Stochastic error}\label{ssec:residual}
Now we turn to the computational variance $\E[\|B^s(x_k^\delta-\E[x_k^\delta])\|^2]$, which arises due to the random choice of
the index $i_k$ at the $k$th SGD iteration. First, we give an upper bound on the variance in terms of suitable iteration noises
$N_{j,1}$ and $N_{j,2}$ (defined in \eqref{eqn:N} below).
\begin{lemma}\label{lem:recursion-var}
Let Assumption \ref{ass:sol}(iii) be fulfilled. Then for the SGD iterate $x_k^\delta$, there holds
\begin{align*}
\E[\|B^s(x_{k+1}^\delta-\E[x_{k+1}^\delta])\|^2] & \leq  \sum_{j=1}^k\eta_j^2(\phi_j^{\tilde{s}})^2\E[\|N_{j,1}\|^2]
                         + 2\sum_{i=1}^k\sum_{j=i}^k\eta_i\eta_j\phi_i^{\tilde{s}}\phi_j^{\tilde{s}}\E[\|N_{i,1}\|\|N_{j,2}\|]\\
                         & \quad + \sum_{i=1}^k\sum_{j=1}^k\eta_i\eta_j\phi_i^{\tilde{s}}\phi_j^{\tilde{s}}\E[\|N_{i,2}\|\|N_{j,2}\|],
\end{align*}
with the random variables $N_{j,1}$ and $N_{j,2}$ given by
\begin{equation}\label{eqn:N}
\begin{aligned}
  N_{j,1} &= (K(x_j^\delta-x^\dag) - K_{i_j}(x_j^\delta -x^\dag)\varphi_{i_j}) + ((y^\dag-y^\delta) - (y_i^\dag-y_i^\delta)\varphi_{i_j}),\\
  N_{j,2} & = - \E[v_j] +  v_{j,i_j}\varphi_{i_j},
\end{aligned}
\end{equation}
where the random variables $v_k$ and $v_{k,i}$ are defined in \eqref{eqn:w} and \eqref{eqn:w_ik}, respectively, and $\varphi_i=(0,\ldots,0,
{n^\frac12},0,\ldots,0)$ denotes the $i$th Cartesian coordinate in $\mathbb{R}^n$ scaled by $n^\frac12$.
\end{lemma}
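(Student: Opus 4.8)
The plan is to derive a recursion for the fluctuation $x_{k+1}^\delta - \E[x_{k+1}^\delta]$ by subtracting the two identities already established in the proof of Lemma~\ref{lem:recursion-mean}: the \emph{conditional} update
\begin{equation*}
  \E[e_{k+1}^\delta|\mathcal{F}_k] = (I-\eta_kB)e_k^\delta - \eta_kK^*(y^\dag-y^\delta) + \eta_kK^*v_k
\end{equation*}
versus the actual (un-averaged) update $e_{k+1}^\delta = e_k^\delta - \eta_kK_{i_k}^*K_{i_k}e_k^\delta - \eta_kK_{i_k}^*(y_{i_k}^\dag-y_{i_k}^\delta) + \eta_kK_{i_k}^*v_{k,i_k}$. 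Writing the single-index terms using the scaled coordinate vector $\varphi_{i_k}$ (so that $K_{i_k}^*K_{i_k}e_k^\delta = K^*(K e_k^\delta \varphi_{i_k} \cdot)$-style identities hold, and crucially $\E[K^*(\cdot)\varphi_{i_k}|\mathcal{F}_k] = K^*(\cdot)$), the difference $e_{k+1}^\delta - \E[e_{k+1}^\delta|\mathcal{F}_k]$ has the form $(I-\eta_kB)(e_k^\delta-\E[e_k^\delta]) - \eta_kK^*(N_{k,1}+N_{k,2})$ plus a term $(I-\eta_kB)(\E[e_k^\delta]-\E[e_k^\delta|\mathcal F_k])$-type correction; carefully, one unrolls the recursion for $e_{k+1}^\delta - \E[e_{k+1}^\delta]$ directly. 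The key point is that $N_{j,1}$ and $N_{j,2}$ are exactly the two noise contributions: $N_{j,1}$ collects the mismatch in the ``data/linear'' part $Ke_j^\delta - K_{i_j}e_j^\delta\varphi_{i_j}$ and $(y^\dag-y^\delta)-(y^\dag_i-y^\delta_i)\varphi_{i_j}$, while $N_{j,2}$ collects the mismatch in the nonlinear correction $v$, namely $-\E[v_j] + v_{j,i_j}\varphi_{i_j}$. Both are conditionally mean-zero given $\mathcal{F}_j$ by construction of $\varphi$.

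Next I would telescope: iterating the one-step relation gives
\begin{equation*}
  x_{k+1}^\delta - \E[x_{k+1}^\delta] = \sum_{j=1}^k \eta_j \Pi_{j+1}^k(B)K^*\big(N_{j,1}+N_{j,2}\big),
\end{equation*}
where $N_{j,1}$ and $N_{j,2}$ are split according to whether the term is conditionally mean zero or not — with $N_{j,1}$ being the part that is independent across $j$ (a genuine martingale-difference-type sum) and $N_{j,2}$ the part carrying conditional dependence through $v_j$. Applying $B^s$ and taking $\E[\|\cdot\|^2]$, I expand the square. Write $a_j = \eta_j B^s\Pi_{j+1}^k(B)K^* N_{j,1}$ and $b_j = \eta_j B^s\Pi_{j+1}^k(B)K^* N_{j,2}$. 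Then $\E[\|B^s(x_{k+1}^\delta-\E[x_{k+1}^\delta])\|^2] = \E[\|\sum_j a_j\|^2] + 2\E[\langle\sum_i a_i,\sum_j b_j\rangle] + \E[\|\sum_j b_j\|^2]$. For the pure $N_{\cdot,1}$ term, conditional orthogonality (each $N_{j,1}$ is mean zero given $\mathcal F_j$ and $a_i$ is $\mathcal F_{j}$-measurable for $i<j$) kills the cross terms, leaving $\sum_j \eta_j^2\|B^sK^*\Pi_{j+1}^k(B)\|^2\E[\|N_{j,1}\|^2] \le \sum_j \eta_j^2(\phi_j^{\tilde s})^2\E[\|N_{j,1}\|^2]$, using $\|B^s\Pi_{j+1}^k(B)K^*\| \le \|B^{s+\frac12}\Pi_{j+1}^k(B)\| = \phi_j^{\tilde s}$ (since $\|K^*\cdot\| = \|B^{1/2}\cdot\|$ in the right sense, and $\Pi$ commutes with $B$). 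For the mixed and pure $N_{\cdot,2}$ terms I cannot use orthogonality, so I bound crudely by Cauchy--Schwarz on each summand: $|\langle a_i,b_j\rangle| \le \|a_i\|\|b_j\| \le \eta_i\eta_j\phi_i^{\tilde s}\phi_j^{\tilde s}\|N_{i,1}\|\|N_{j,2}\|$, and likewise for $\|b_ib_j\|$, then take expectations. Collecting the three pieces yields exactly the claimed three-term bound, with the double sum over $i\le j$ in the mixed term (after using $\sum_i\sum_j = \sum_{i\le j} + \sum_{i>j}$ and symmetry, or directly keeping the one-sided sum from the martingale structure on the $a$-side).

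The main obstacle is the bookkeeping around the scaled coordinate vectors $\varphi_{i_j}$ and verifying that $\E[N_{j,1}|\mathcal F_j] = \E[N_{j,2}|\mathcal F_j] = 0$, so that the ``$N_{\cdot,1}$ against anything earlier'' cross terms genuinely vanish; one has to check that $\E[K_{i_j}^*K_{i_j}(\cdot)\varphi_{i_j}|\mathcal F_j]$ and $\E[K_{i_j}^*(y_{i_j}^\dag - y_{i_j}^\delta)\varphi_{i_j}|\mathcal F_j]$ and $\E[K_{i_j}^*v_{j,i_j}\varphi_{i_j}|\mathcal F_j]$ reproduce, respectively, $B(\cdot)$, $K^*(y^\dag-y^\delta)$ and $K^*\E[v_j]$ — which is precisely the normalization built into the definition $\varphi_i = (0,\ldots,n^{1/2},\ldots,0)$ together with the $n^{-1/2}$ scaling in the definition of $K$. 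A secondary subtlety is that $\E[e_j^\delta|\mathcal F_j] = e_j^\delta$ is used repeatedly (the iterate is $\mathcal F_j$-measurable), but $\E[v_j]$ appearing in $N_{j,2}$ is the \emph{full} expectation, not the conditional one; this mismatch is exactly why $N_{j,2}$ retains conditional dependence and must be estimated via the blunt Cauchy--Schwarz route rather than orthogonality — and it is the reason Assumption~\ref{ass:stoch} will later be needed (in subsequent lemmas) to control $\E[\|N_{j,2}\|^2]$. Everything else is routine: splitting the square, Cauchy--Schwarz per term, and the operator-norm inequality $\|B^s\Pi_{j+1}^k(B)K^*\|\le\phi_j^{\tilde s}$.
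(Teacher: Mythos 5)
Your argument is correct and follows the paper's proof essentially verbatim: the same one-step recursion $z_{k+1}=(I-\eta_kB)z_k+\eta_kK^*(N_{k,1}+N_{k,2})$ obtained by subtracting the averaged update from the pathwise one, the same unrolling, conditional orthogonality of the $N_{\cdot,1}$ part to kill its cross terms (and to reduce the mixed term to the one-sided sum $i\le j$), and Cauchy--Schwarz together with $\|B^s\Pi_{j+1}^k(B)K^*\|=\phi_j^{\tilde s}$ for every term involving $N_{\cdot,2}$. One small slip: $N_{j,2}$ is \emph{not} conditionally mean-zero given $\mathcal{F}_j$ (since $\E[v_j]$ is the full, not the conditional, expectation), as you yourself correctly observe two paragraphs later --- but your bound never uses that property, so nothing breaks.
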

\begin{proof}
Similar to the proof of Lemma \ref{lem:recursion-mean}, we rewrite the SGD iteration \eqref{eqn:sgd} as
\begin{equation}\label{eqn:sgd-w}
  x_{k+1}^\delta = x_k^\delta - \eta_kK_{i_k}^*K_{i_k}(x_k^\delta-x^\dag) -\eta_kK_{i_k}^*(y_{i_k}^\dag-y_{i_k}^\delta) + \eta_kK_{i_k}^*v_{k,i_k} ,
\end{equation}
with the random variable $v_{k,i}$ defined in \eqref{eqn:w_ik}. Upon recalling the definition of $v_k$ in
\eqref{eqn:w} and noting the measurability of the iterate $x_k^\delta$ with respect to the filtration $\mathcal{F}_k$, we obtain
\begin{align*}
  \E[x_{k+1}^\delta|\mathcal{F}_k] & = x_k^\delta - \frac{\eta_k}{n}\sum_{i=1}^nK_{i}^*K_{i}(x_k^\delta-x^\dag) -\frac{\eta_k}{n}\sum_{i=1}^nK_{i}^*(y_{i}^\dag-y_{i}^\delta) + \frac{\eta_k}{n}\sum_{i=1}^nK_{i}^*v_{k,i}\\
    & = x_k^\delta - \eta_kB(x_k^\delta - x^\dag) -\eta_k K^*(y^\dag-y^\delta) + \eta_k K^*v_k.
\end{align*}
Taking full conditional yields
\begin{align}\label{eqn:sgd-w-mean}
  \E[x_{k+1}^\delta] = \E[x_k^\delta] - \eta_kB \E[x_k^\delta-x^
  \dag]-\eta_k K^*(y^\dag-y^\delta) + \eta_k K^*\E[v_k].
\end{align}
Thus, subtracting the recursion for $\E[x_{k}^\delta]$ in \eqref{eqn:sgd-w-mean} from that for $x_k^\delta$ in \eqref{eqn:sgd-w}
indicates that the random variable $z_{k}:=x_k^\delta-\E[x_k^\delta]$ satisfies
\begin{align}
  z_{k+1} & = z_k -\eta_kBz_k + \eta_k\big[(B(x_k^\delta-x^\dag) - K_{i_k}^*K_{i_k}(x_k^\delta-x^\dag)) \nonumber\\
     &\quad  {+ (K^*(y^\dag-y^\delta)-K_{i_k}(y_{i_k}^\dag-y_{i_k}^\delta))} - (K^*\E[v_k]-K_{i_k}^*v_{k,i_k}) \big]\nonumber\\
    & = (I-\eta_kB)z_k + \eta_kM_k,\label{eqn:recurs-z}
\end{align}
with the initial condition $z_1=0$ (since $x_1$ is deterministic) and the random variable $M_j$ given by
\begin{align*}
  M_j &= \big((B(x_j^\delta-x^\dag) - K_{i_j}^*K_{i_j}(x_j^\delta-x^\dag)) +(K^*(y^\dag-y^\delta)-K_{i_j}(y_{i_j}^\dag-y_{i_j}^\delta))\big)\\
      & \quad +\big(- (K^*\E[v_j]-K_{i_j}^*v_{j,i_j})\big) :=M_{j,1} + M_{j,2},
\end{align*}
where $M_{j,1}$ and $M_{j,2}$ are given by
\begin{align*}
   M_{j,1} & =  (B(x_j^\delta-x^\dag) - K_{i_j}^*K_{i_j}(x_j^\delta-x^\dag)) + (K^* (y^\dag-y^\delta) - K_{i_j}^*(y_{i_j}^\dag-y_{i_j}^\delta)),\\
   M_{j,2} & = -(K^*\E[v_j] -K_{i_j}^* v_{j,i_j}).
\end{align*}
The random variable $M_k$ represents the iteration noise, due to the random choice of the index $i_k$.
The term $M_{j,2}$ contains the lump sum contributions due to the presence of nonlinearity. This splitting
enables separately treating conditionally independent and dependent factors, i.e., $M_{j,1}$ and $M_{j,2}$.
Repeatedly applying the recursion \eqref{eqn:recurs-z} and using the initial condition $z_1=0$ lead to
\begin{equation*}
  z_{k+1} = \sum_{j=1}^{k} \eta_j \Pi_{j+1}^k(B) M_j.
\end{equation*}
With the preceding decomposition of $M_j$, we obtain
\begin{align*}
  \E[\|B^sz_{k+1}\|^2] &= \sum_{i=1}^k\sum_{j=1}^k\eta_i\eta_j\E[\langle B^s\Pi_{i+1}^k(B)M_i,B^s\Pi_{j+1}^k(B)M_j\rangle]\\
                       &= \sum_{i=1}^k\sum_{j=1}^k\eta_i\eta_j\E[\langle B^s\Pi_{i+1}^k(B)(M_{i,1}+M_{i,2}),B^s\Pi_{j+1}^k(B)(M_{j,1}+M_{j,2})\rangle]\\
                       &= \sum_{i=1}^k\sum_{j=1}^k\eta_i\eta_j\E[\langle B^s\Pi_{i+1}^k(B)M_{i,1},B^s\Pi_{j+1}^k(B)M_{j,1}\rangle]\\
                         & \quad + 2\sum_{i=1}^k\sum_{j=1}^k\eta_i\eta_j\E[\langle B^s\Pi_{i+1}^k(B)M_{i,1},B^s\Pi_{j+1}^k(B)M_{j,2}\rangle]\\
                         & \quad + \sum_{i=1}^k\sum_{j=1}^k\eta_i\eta_j\E[\langle B^s\Pi_{i+1}^k(B)M_{i,2},B^s\Pi_{j+1}^k(B)M_{j,2}\rangle] \\
                         &:= {\rm I} + {\rm II} + {\rm III}.
\end{align*}
Below we simplify the terms separately. By the measurability of $x_j^\delta$ with respect to the filtration
$\mathcal{F}_j$, $\E[M_{j,1}|\mathcal{F}_j]=0$, which directly implies the conditional independence
\begin{equation*}
  \E[\langle M_{i,1},M_{j,1}\rangle] = 0 \quad i\neq j.
\end{equation*}
Indeed, for $i>j$, $\E[\langle M_{i,1},M_{j,1}\rangle|\mathcal{F}_i]=\langle \E[M_{i,1}|\mathcal{F}_i],M_{j,1}\rangle=0$,
and taking full conditional yields the desired identity. Thus, the term ${\rm I}$ simplifies to
\begin{equation*}
  {\rm I} = \sum_{j=1}^k \E[\|B^s\Pi_{j+1}^k(B)M_{j,1}\|^2].
\end{equation*}
Further, for $i>j$, a similar argument yields $\E[\langle M_{i,1},M_{j,2}\rangle] = 0 $ and thus
\begin{equation*}
  {\rm II} = 2\sum_{i=1}^k\sum_{j=i}^k\E[\langle B^s\Pi_{i+1}^kM_{i,1}, B^s\Pi_{j+1}^kM_{j,2}\rangle]
\end{equation*}
Now we further simplify $M_{j,1}$ and $M_{j,2}$. By the definition of $M_j$ in \eqref{eqn:recurs-z} and
the definitions of $N_{j,1}$ and $N_{j,2}$, with $K^{*-1}$ being the pseudoinverse of $K^*$, we may
rewrite $K^{*-1}M_j$ as
\begin{align*}
  K^{*-1}M_j & = K^{*-1}\Big[ (B(x_j^\delta-x^\dag) - K_{i_j}^*K_{i_j}(x_j^\delta-x^\dag))\\
             &\quad + (K^* (y^\dag-y^\delta) - K_{i_j}^*(y_{i_j}^\dag-y_{i_j}^\delta))-(K^*\E[v_j]-K_{i_j}^*v_{j,i_j})\Big]\\
             & = \big((K(x_j^\delta-x^\dag) - K_{i_j}(x_j^\delta -x^\dag)\varphi_{i_j}) + (y^\dag-y^\delta)-(y_{i_j}^\dag-y_{i_j}^\delta)\varphi_{i_j}\big) \\
             & \quad - (\E[v_j]- v_{j,i_j}\varphi_{i_j}):=N_{j,1}+N_{j,2},
\end{align*}
where $\varphi_{i}$ denotes the $i$th Cartesian basis vector in $\mathbb{R}^n$ scaled by $n^\frac12$. Thus, by
the triangle inequality and the identity $\|B^s\Pi_{j+1}^k(B)K^*\|^2=\|B^{s+\frac12}\Pi_{j+1}^k(B)\|^2$,
\begin{align*}
 \mathbb{E}[\|B^sz_{k+1}\|^2] & = \sum_{j=1}^k\eta_j^2\E[\|B^{s+\frac12}\Pi_{j+1}^k(B)N_{j,1}\|^2]\\
                         & \quad + 2\sum_{i=1}^k\sum_{j=i}^k\eta_i\eta_j\E[\langle B^s\Pi_{i+1}^k(B)K^*N_{i,1},B^s\Pi_{j+1}^k(B)K^*N_{j,2}\rangle]\\
                         & \quad + \sum_{i=1}^k\sum_{j=1}^k\eta_i\eta_j\E[\langle B^s\Pi_{i+1}^k(B)K^*N_{i,2},B^s\Pi_{j+1}^k(B)K^*N_{j,2}\rangle]\\
                         & \leq  \sum_{j=1}^k\eta_j^2\E[\|B^{s+\frac12}\Pi_{j+1}^k(B)\|^2\|N_{j,1}\|^2]\\
                         & \quad + 2\sum_{i=1}^k\sum_{j=i}^k\eta_i\eta_j\|B^{s+\frac12}\Pi_{i+1}^k(B)\|\|B^{s+\frac12}\Pi_{j+1}^k(B)\|\E[\|N_{i,1}\|\|N_{j,2}\|]\\
                         & \quad + \sum_{i=1}^k\sum_{j=1}^k\eta_i\eta_j\|B^{s+\frac12}\Pi_{i+1}^k(B)\|\|B^{s+\frac12}\Pi_{j+1}^k(B)\|\E[\|N_{i,2}\|\|N_{j,2}\|].
\end{align*}
This completes the proof of the lemma.
\end{proof}

The next result bounds the iteration noises $N_{j,1}$ and $N_{j,2}$ under Assumptions \ref{ass:sol}(i)--(iii) and \ref{ass:stoch}.
\begin{lemma}\label{lem:bound-N}
Let Assumptions \ref{ass:sol}(i)--(iii), and \ref{ass:stoch} be fulfilled. Then for $N_{j,1}$ and $N_{j,2}$ defined in
\eqref{eqn:N} and $e_j^\delta=x_j^\delta-x^\dag$, there hold
\begin{align*}
  \E[\|N_{j,1}\|^2]^\frac12 &\leq n^\frac12(\E[\|B^\frac12e_j^\delta\|^2]^\frac12+\delta),\\
  \E[\|N_{j,2}\|^2]^\frac12 &\leq n^\frac12(\tfrac{c_R(2+\theta-\eta)}{(1+\theta)(1-\eta)}\E[\|B^\frac12e_j^\delta\|^2]^\frac12+c_R\delta)\E[\|e_j^\delta\|^2]^\frac\theta2.
\end{align*}
\end{lemma}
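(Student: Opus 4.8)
The plan is to treat $N_{j,1}$ and $N_{j,2}$ as the deviation of a single uniformly sampled coordinate from its conditional mean given $\mathcal{F}_j$, and to estimate the second moment coordinatewise, using $\|\varphi_i\|^2=n$ throughout.

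For $N_{j,1}$, I would set $g_i=K_ie_j^\delta+(y_i^\dag-y_i^\delta)$, so that $g:=n^{-1/2}(g_1,\ldots,g_n)=Ke_j^\delta+(y^\dag-y^\delta)$ and $N_{j,1}=g-g_{i_j}\varphi_{i_j}$. Since $i_j$ is drawn uniformly and independently of $\mathcal{F}_j$ while $e_j^\delta$ is $\mathcal{F}_j$-measurable, $\E[g_{i_j}\varphi_{i_j}\mid\mathcal{F}_j]=g$, and the bias--variance identity gives $\E[\|N_{j,1}\|^2\mid\mathcal{F}_j]=\E[\|g_{i_j}\varphi_{i_j}\|^2\mid\mathcal{F}_j]-\|g\|^2=\sum_{i=1}^n\|g_i\|^2-\|g\|^2\le n\|g\|^2$, using $\sum_i\|g_i\|^2=n\|g\|^2$. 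Taking full expectation, bounding $\|g\|\le\|Ke_j^\delta\|+\delta$ (triangle inequality and $\|y^\dag-y^\delta\|=\delta$), applying Minkowski's inequality in $L^2$, and using $\|Ke_j^\delta\|=\|B^{1/2}e_j^\delta\|$, yields the first estimate.

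For $N_{j,2}$, the key point is that $N_{j,2}=v_{j,i_j}\varphi_{i_j}-\E[v_j]$ with $v_j=n^{-1/2}(v_{j,1},\ldots,v_{j,n})$ by \eqref{eqn:w} and \eqref{eqn:w_ik}, so $\E[v_{j,i_j}\varphi_{i_j}\mid\mathcal{F}_j]=v_j$. Expanding $\|N_{j,2}\|^2$ conditionally on $\mathcal{F}_j$ and taking full expectation, the cross term collapses and gives $\E[\|N_{j,2}\|^2]=n\E[\|v_j\|^2]-\|\E[v_j]\|^2\le n\E[\|v_j\|^2]$. It then suffices to bound $\E[\|v_j\|^2]^{1/2}$: I would split $v_j$ by the triangle inequality into the linearization error $F(x_j^\delta)-F(x^\dag)-Ke_j^\delta$, handled by the stochastic estimate of Lemma~\ref{lem:R} (contributing the constant $\tfrac{c_R}{1+\theta}$), and $(I-R_{x_j^\delta}^*)(F(x_j^\delta)-y^\delta)$; the latter I would further split as $(I-R_{x_j^\delta}^*)(F(x_j^\delta)-y^\dag)+(I-R_{x_j^\delta}^*)(y^\dag-y^\delta)$ and bound the two pieces by Assumption~\ref{ass:stoch} at $t=1$ (so $z_1=x_j^\delta$) with $G(x)=F(x)-y^\dag$ and with the constant map $G\equiv y^\dag-y^\delta$, respectively, using $\|F(x_j^\delta)-y^\dag\|\le(1-\eta)^{-1}\|B^{1/2}e_j^\delta\|$ from Lemma~\ref{lem:linear}(i) in the first. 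Adding the three contributions, simplifying $\tfrac{1}{1+\theta}+\tfrac{1}{1-\eta}=\tfrac{2+\theta-\eta}{(1+\theta)(1-\eta)}$, and multiplying by $n^{1/2}$ gives the second estimate.

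The conditional-expectation bookkeeping is routine; the main obstacle is obtaining the sharp constant in the $N_{j,2}$ bound, which forces the precise three-way split of $v_j$ so that Assumption~\ref{ass:stoch} can be applied separately to the data-misfit term (re-expressed through $\|B^{1/2}e_j^\delta\|$ via Lemma~\ref{lem:linear}(i)) and to the noise term, rather than bounding $\|I-R_{x_j^\delta}^*\|$ in operator norm.
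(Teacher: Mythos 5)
Your proposal is correct and follows essentially the same route as the paper: a conditional bias--variance bound reducing $\E[\|N_{j,1}\|^2]$ and $\E[\|N_{j,2}\|^2]$ to $n$ times the second moment of the sampled quantity, followed by the same three-way split of $v_j$ handled via Lemma \ref{lem:R}, Assumption \ref{ass:stoch} (applied to $F(x)-y^\dag$ and to the constant noise term), and Lemma \ref{lem:linear}(i), yielding the identical constant $\tfrac{c_R(2+\theta-\eta)}{(1+\theta)(1-\eta)}$. The only cosmetic deviations are that you bundle the two pieces of $N_{j,1}$ into a single vector $g$ before applying the variance bound (the paper splits by the triangle inequality first) and that you record the exact identity $\E[\|N_{j,2}\|^2]=n\E[\|v_j\|^2]-\|\E[v_j]\|^2$ where the paper only states the inequality; neither changes the argument.
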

\begin{proof}
First, by the triangle inequality,
\begin{align*}
  \E[\|N_{j,1}\|^2]^\frac12 & \leq \E[\|(K(x_j^\delta-x^\dag) - K_{i_j}(x_j^\delta -x^\dag)\varphi_{i_j})\|^2]^\frac12 + \E[\|(y^\dag-y^\delta)-(y_{i_j}^\dag-y_{i_j}^\delta)\varphi_{i_j}\|^2]^\frac12.
\end{align*}
By the measurability of the SGD iterate $x_j^\delta$ with respect to the filtration $\mathcal{F}_j$, the
identity $\E[K_{i_j}(x_j^\delta -x^\dag)\varphi_{i_j}|
\mathcal{F}_{j}]= K (x_j^\delta - x^\dag)$ and bias-variance decomposition, we may bound the conditional
expectation $\E[\|(K(x_j^\delta-x^\dag) - K_{i_j}(x_j^\delta -x^\dag)\varphi_{i_j})\|^2|\mathcal{F}_j]$ by
\begin{align*}
     &\E[\|(K(x_j^\delta-x^\dag) - K_{i_j}(x_j^\delta -x^\dag)\varphi_{i_j})\|^2|\mathcal{F}_j]
  \leq  \E[\|K_{i_j}(x_j^\delta -x^\dag)\varphi_{i_j}\|^2|\mathcal{F}_j] \\
     =& n^{-1}\sum_{i=1}^n\|K_i(x_j^\delta-x^\dag)\|^2n = n\|K(x_j^\delta-x^\dag)\|^2,
\end{align*}
and then by taking full expectation, we obtain
\begin{equation*}
  \E[\|(K(x_j^\delta-x^\dag) - K_{i_j}(x_j^\delta -x^\dag)\varphi_{i_j})\|^2]^\frac12 \leq n^\frac12\E[\|K(x_j^\delta -x^\dag)\|^2]^\frac12.
\end{equation*}
Similarly,
\begin{equation*}
  \E[\|(y^\dag-y^\delta)-(y_{i_j}^\dag-y_{i_j}^\delta)\varphi_{i_j}\|^2]^\frac12 \leq n^\frac12\delta.
\end{equation*}
This shows the bound on $N_{j,1}$. Next, we bound $N_{j,2}$. Similarly, by the measurability of the SGD iterate $x_j^\delta$
with respect to the filtration $\mathcal{F}_j$, the telescopic expectation identity $\E_{\mathcal{F}_j}
[\E[v_{j,i_j}\varphi_{i_j}|\mathcal{F}_j]]=\E_{\mathcal{F}_{j}}[v_{j}]$ ($\E_{\mathcal{F}_j}$ denotes
taking expectation in $\mathcal{F}_j$) and bias-variance decomposition, we deduce
\begin{align*}
     \E[\| (\E[v_j]-v_{j,i_j}\varphi_{i_j})\|^2]
    \leq &  \E_{\mathcal{F}_j}[\E[\|v_{j,i_j}\varphi_{i_j}\|^2|\mathcal{F}_j]] = n\E[\|v_j\|^2],
\end{align*}
i.e., $\E[\| (\E[v_j]- v_{j,i_j}\varphi_{i_j})\|^2]^\frac12\leq n^\frac12\E[\|v_j\|^2]^\frac12$.
Then it follows from the triangle inequality, Assumption \ref{ass:stoch} and Lemma \ref{lem:R} that
\begin{align*}
  \E[\|v_j\|^2]^\frac12 & \leq \E[\|(F(x_j^\delta)-F(x^\dag)-K(x_j^\delta-x^\dag))\|^2]^\frac12
   + \E[\|(I-{R_{x_j^\delta}^{*}})(F(x_j^\delta)-y^\delta)\|^2]^\frac12\\
   & \leq \tfrac{c_R}{1+\theta}\E[\|Ke_j^\delta\|^2]^\frac12\E[\|e_j^\delta\|^2]^\frac\theta2 + c_R(\tfrac1{1-\eta}\E[\|Ke_j^\delta\|^2]^\frac{1}{2}+\delta)\E[\|e_j^\delta\|^2]^\frac\theta2\\
   & = (\tfrac{(2+\theta-\eta)c_R}{(1+\theta)(1-\eta)}\E[\|Ke_j^\delta\|^2]^\frac12+c_R\delta)\E[\|e_j^\delta\|^2]^\frac\theta2.
\end{align*}
This completes the proof of the lemma.
\end{proof}

\begin{remark}\label{rmk:Nj}
Note that the convergence analysis in \cite{JinLu:2019} relies heavily on the independence $\E[\langle M_j,
M_\ell\rangle]=0$ for $j\neq\ell$. This identity is no longer valid for nonlinear inverse problems, although
the linear part $M_{j,1}$/$N_{j,1}$ still satisfies the desired relation, i.e., $\E[\langle M_{j,1},M_{\ell,1}\rangle]
=0$ for $j\neq \ell$. The conditional dependence among the iteration noises $M_{j,2}$s poses one
big challenge in the convergence analysis, and the splitting of the conditionally dependent and independent
components will play an important role in the analysis in Section \ref{ssec:rate}. Assumption \ref{ass:stoch}
is precisely to compensate the conditional dependence of the nonlinear term $M_{j,2}$/$N_{j,2}$ (and thus double summations).
\end{remark}

\begin{remark}
The constants in Lemma \ref{lem:bound-N} involve an unpleasant dependence on the number of equations $n$ as $n^\frac12$.
This is due to the variance inflation of the stochastic gradient estimate instead of the true gradient. It can be reduced
by employing a mini-batch strategy, i.e., a fractional number of equations from the system instead of only one equation.
\end{remark}

Last, we give a bound on the variance $\mathbb{E}[\|B^s(x_k^\delta- \E[x_k^\delta])\|^2]$. This
result will play an important role in deriving error estimates in Section \ref{ssec:rate}.
\begin{theorem}\label{thm:err-var}
Let Assumptions \ref{ass:sol}(i)--(iii) and \ref{ass:stoch} be fulfilled. Then for the SGD iterate $x_k^\delta$,
there holds for any $s\in[0,\frac12]$,
\begin{align*}
   \E[\|B^s(\E[x_{k+1}^\delta]-&x_{k+1}^\delta)\|^2] \le n\sum_{j=1}^{k}\eta_j^2(\phi_j^{\tilde s})^2(\E[\|B^\frac{1}{2}e_j^\delta\|^2]^\frac{1}{2}+\delta)^2\\
                         & \quad +2n\sum_{i=1}^k\sum_{j=i}^k\eta_i\eta_j\phi_i^{\tilde s}\phi_j^{\tilde s}
                         (\E[\|B^\frac12e_i^\delta\|^2]^\frac12+\delta)(\tfrac{(2+\theta-\eta)c_R}{(1+\theta)(1-\eta)}\E[\|B^\frac12e_j^\delta
                         \|^2]^\frac{1}{2}+c_R\delta)\E[\|e_j^\delta\|^2]^\frac\theta2\\
                         & \quad + n \Big(\sum_{j=1}^k\eta_j\phi_j^{\tilde s}
                         (\tfrac{(2+\theta-\eta)c_R}{(1+\theta)(1-\eta)}\E[\|B^\frac12e_{j}^\delta\|^2]^\frac12+c_R\delta)\E[\|e_j^\delta\|^2]^\frac\theta2\Big)^2.
\end{align*}
\end{theorem}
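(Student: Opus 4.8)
The plan is to combine Lemma \ref{lem:recursion-var}, which already provides the abstract recursion bound for $\E[\|B^s(x_{k+1}^\delta-\E[x_{k+1}^\delta])\|^2]$ in terms of the iteration noises $N_{j,1}$ and $N_{j,2}$, with the explicit bounds on $\E[\|N_{j,1}\|^2]$ and $\E[\|N_{j,2}\|^2]$ from Lemma \ref{lem:bound-N}. First I would recall from Lemma \ref{lem:recursion-var} that
\begin{align*}
  \E[\|B^s z_{k+1}\|^2] & \leq \sum_{j=1}^k\eta_j^2(\phi_j^{\tilde s})^2\E[\|N_{j,1}\|^2] + 2\sum_{i=1}^k\sum_{j=i}^k\eta_i\eta_j\phi_i^{\tilde s}\phi_j^{\tilde s}\E[\|N_{i,1}\|\|N_{j,2}\|] \\
    & \quad + \sum_{i=1}^k\sum_{j=1}^k\eta_i\eta_j\phi_i^{\tilde s}\phi_j^{\tilde s}\E[\|N_{i,2}\|\|N_{j,2}\|],
\end{align*}
where $z_k = x_k^\delta - \E[x_k^\delta]$. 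The three terms correspond exactly to the three blocks in the claimed bound, so it suffices to insert the right-hand sides of Lemma \ref{lem:bound-N} into each of them.

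For the first term, I would simply substitute $\E[\|N_{j,1}\|^2] \leq n(\E[\|B^{1/2}e_j^\delta\|^2]^{1/2}+\delta)^2$, which matches the first block verbatim. For the mixed term, I would apply the Cauchy--Schwarz inequality in the form $\E[\|N_{i,1}\|\|N_{j,2}\|] \leq \E[\|N_{i,1}\|^2]^{1/2}\E[\|N_{j,2}\|^2]^{1/2}$, and then plug in both bounds from Lemma \ref{lem:bound-N}; the product $n^{1/2}\cdot n^{1/2} = n$ produces the factor $2n$ in front, and the remaining factors $(\E[\|B^{1/2}e_i^\delta\|^2]^{1/2}+\delta)$ and $(\tfrac{(2+\theta-\eta)c_R}{(1+\theta)(1-\eta)}\E[\|B^{1/2}e_j^\delta\|^2]^{1/2}+c_R\delta)\E[\|e_j^\delta\|^2]^{\theta/2}$ match the second block. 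For the third term, which is a genuine double sum over all $i,j$, I would again use Cauchy--Schwarz $\E[\|N_{i,2}\|\|N_{j,2}\|]\leq \E[\|N_{i,2}\|^2]^{1/2}\E[\|N_{j,2}\|^2]^{1/2}$, insert the bound on $N_{j,2}$, and observe that the resulting double sum over $i$ and $j$ factors as the square
\begin{equation*}
  n\Big(\sum_{j=1}^k \eta_j\phi_j^{\tilde s}(\tfrac{(2+\theta-\eta)c_R}{(1+\theta)(1-\eta)}\E[\|B^{1/2}e_j^\delta\|^2]^{1/2}+c_R\delta)\E[\|e_j^\delta\|^2]^{\theta/2}\Big)^2,
\end{equation*}
which is precisely the third block.

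The argument is essentially a bookkeeping exercise; the only point requiring care is ensuring that the restriction $s\in[0,\tfrac12]$ (used so that $\tilde s = s + \tfrac12 \in [\tfrac12,1]$ and hence $\phi_j^{\tilde s} = \|B^{\tilde s}\Pi_{j+1}^k(B)\|$ is finite and the identity $\|B^s\Pi_{j+1}^k(B)K^*\|^2 = \|B^{s+1/2}\Pi_{j+1}^k(B)\|^2$ is valid without any spectral difficulties) is compatible with the hypotheses of Lemma \ref{lem:recursion-var} and Lemma \ref{lem:bound-N}. I expect the main (if mild) obstacle to be verifying that the double-sum term over all $i,j$ genuinely splits into a perfect square — this relies on the fact that, after Cauchy--Schwarz, the summand over $i$ and the summand over $j$ decouple into the product of two identical one-index sums, whereas the mixed term over $i\leq j$ does not fully decouple and must be left as an ordered double sum. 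No additional assumptions beyond those already stated are needed, and the constant $c^*$-type dependence on $n$, $\theta$, $\eta$ is inherited directly from Lemma \ref{lem:bound-N}.
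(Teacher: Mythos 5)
Your proposal is correct and is exactly the paper's argument: the paper's proof of this theorem is the one-line statement that the assertion follows directly from Lemmas \ref{lem:recursion-var} and \ref{lem:bound-N}, and your plan merely spells out the (implicit) Cauchy--Schwarz steps for the mixed and double-sum terms and the factorization of the latter into a perfect square. No gaps.
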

\begin{proof}
The assertion follows directly from Lemmas \ref{lem:recursion-var} and \ref{lem:bound-N}.
\end{proof}

\begin{remark}
It is worth noting that the variance $\E[\|B^s(\E[x_k^\delta]-x_k^\delta)\|^2]$ of the SGD iterate $x_k^\delta$ is
essentially independent of the source condition in Assumption \ref{ass:sol}(iv).
\end{remark}

\subsection{Convergence rates}\label{ssec:rate}

This part is devoted to convergence rates analysis for Algorithm \ref{alg:sgd} with a polynomially decaying step size schedule in Assumption
\ref{ass:sol}(ii), where the explicit form of the step sizes allows bounding various quantities appearing in
Theorems \ref{thm:err-mean} and \ref{thm:err-var}. Below we analyze the cases of exact and noisy data separately,
since in the case of exact data, the convergence rate involves constants that are far more transparent in terms of the dependence on
various algorithmic parameters and the derived estimates also form the basis for analyzing the case of noisy data.

First we analyze the case of exact data $y^\dag$, and the bounds essentially boil down to the approximation error
(under the source condition) and computational variance. Without loss of generality, we assume that $\|B\|\leq 1$
and $\eta_0\leq 1$ below, which can be easily achieved by properly rescaling the operator $F$ and the data $y^\dag / y^\delta$.
The analysis relies heavily on various lengthy and technical estimates given in Appendix \ref{app:estimate},
especially Propositions \ref{prop:est-ex} and \ref{prop:est-noisy}.

\begin{theorem}\label{thm:err-total-ex}
Let Assumptions \ref{ass:sol}, \ref{ass:stepsize}(ii) and \ref{ass:stoch} be fulfilled with $\|w\|$, $\theta$ and
$\eta_0$ being sufficiently small, and $x_k$ be the $k$th SGD iterate for the exact data $y^\dag$. Then the error
$e_k = x_k - x^\dag$ satisfies
\begin{align*}
  \E[\|e_k\|^2] & \leq c^*\|w\|^2 k^{-\min(2\nu(1-\alpha),\alpha-\epsilon)} \quad \mbox{and}\quad   \E[\|B^\frac12e_k\|^2] \leq c^*\|w\|^2k^{-\min((1+2\nu)(1-\alpha),1-\epsilon)},
\end{align*}
where $\epsilon\in(0,\frac{\alpha}{2})$ is small, and the constant $c^*$ is independent of $k$, but depends on $\alpha$, $\nu$,
$\eta_0$, $n$, and $\theta$.
\end{theorem}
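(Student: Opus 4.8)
The plan is to set up a coupled induction on the two quantities $a_k := \E[\|e_k\|^2]$ and $b_k := \E[\|B^{1/2}e_k\|^2]$, using the recursion on the mean from Theorem~\ref{thm:err-mean} (with $s=0$ and $s=\frac12$) together with the variance bound from Theorem~\ref{thm:err-var} (again with $s=0$ and $s=\frac12$), combined via the bias–variance decomposition \eqref{eqn:bias-var}. Concretely, the induction hypothesis will be that $a_j \le c^*\|w\|^2 j^{-p}$ and $b_j \le c^*\|w\|^2 j^{-q}$ for all $j<k$, with $p=\min(2\nu(1-\alpha),\alpha-\epsilon)$ and $q=\min((1+2\nu)(1-\alpha),1-\epsilon)$, and the goal is to close the same bounds at step $k$. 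Since we work with exact data here, we set $\delta=0$ throughout, which kills all the $\delta$-terms in Theorems~\ref{thm:err-mean} and~\ref{thm:err-var}; this is why the exact-data case has more transparent constants and serves as the template for the noisy case.

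First I would handle the \emph{bias} term. Applying Theorem~\ref{thm:err-mean} with $s=0$ and $\delta=0$ gives
$\|\E[e_{k+1}]\| \le \phi_0^{\nu}\|w\| + \frac{(3-\eta)c_R}{2(1-\eta)}\sum_{j=1}^k \eta_j \phi_j^{1/2} a_j^{1/2} b_j^{1/2}$,
and the $s=\frac12$ version gives an analogous bound with $\phi_0^{1/2+\nu}$ and $\phi_j^{1}$. The leading terms $\phi_0^{\nu}\|w\|$ and $\phi_0^{1/2+\nu}\|w\|$ are controlled by the standard spectral estimates for $\|B^{\mu}\Pi_1^k(B)\|$ under a polynomially decaying step size — these are exactly the quantities bounded in Appendix~\ref{app:estimate} (Proposition~\ref{prop:est-ex}) and yield the $k^{-2\nu(1-\alpha)}$ and $k^{-(1+2\nu)(1-\alpha)}$ rates respectively. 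For the summation terms, I would insert the induction hypothesis $a_j^{1/2}b_j^{1/2}\le c^*\|w\|^2 j^{-(p+q)/2}$, use the decay estimate on $\phi_j^{\tilde s}=\|B^{\tilde s}\Pi_{j+1}^k(B)\|$ (roughly $\phi_j^{\tilde s}\lesssim (\sum_{i=j+1}^k\eta_i)^{-\tilde s}$), and evaluate the resulting sum $\sum_j \eta_j (\sum_{i>j}\eta_i)^{-\tilde s} j^{-(p+q)/2}$ by an integral comparison; the smallness of $\|w\|$ is what makes the coefficient $c_R c^*\|w\|^2$ strictly smaller than the $c^*$ we are trying to prove, so the induction can absorb this term.

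Next the \emph{variance} term. With $\delta=0$, Theorem~\ref{thm:err-var} reduces to three sums: a diagonal sum $n\sum_j \eta_j^2 (\phi_j^{\tilde s})^2 b_j$, a cross sum $2n\sum_{i\le j}\eta_i\eta_j\phi_i^{\tilde s}\phi_j^{\tilde s} b_i^{1/2}(\cdots b_j^{1/2})a_j^{\theta/2}$, and a squared sum $n(\sum_j \eta_j\phi_j^{\tilde s}(\cdots b_j^{1/2})a_j^{\theta/2})^2$. Into each I substitute the induction hypotheses for $a_j,b_j$, then reduce to estimating sums of the form $\sum_j \eta_j^2(\sum_{i>j}\eta_i)^{-2\tilde s} j^{-r}$ and $\big(\sum_j \eta_j (\sum_{i>j}\eta_i)^{-\tilde s} j^{-r'}\big)^2$ — precisely the integrals carried out in Appendix~\ref{app:estimate}. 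The diagonal term is what produces the $k^{-(\alpha-\epsilon)}$ "variance floor" (hence the $\epsilon$-loss and the restriction $\epsilon<\alpha/2$), while the smallness of $\theta$ ensures that $a_j^{\theta/2}$ decays slowly enough not to help but also that the prefactors stay controlled, and the smallness of $\eta_0$ (and $\|w\|$) keeps the constant multiplying $c^*$ below $1$. Summing the bias$^2$ and variance contributions then reproduces $a_{k+1}\le c^*\|w\|^2 (k+1)^{-p}$ and $b_{k+1}\le c^*\|w\|^2(k+1)^{-q}$, closing the induction; the base case $k=1$ holds since $e_1=B^\nu w$ so $a_1\le\|w\|^2$ and $b_1\le\|w\|^2$ provided $c^*\ge1$.

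The main obstacle I anticipate is the \emph{bookkeeping of the coupling}: because the mean recursion at level $s=0$ feeds on $b_j$ (the $s=\frac12$ quantity) and the variance at level $s=\frac12$ feeds back on $a_j$ through the $a_j^{\theta/2}$ factors, one must choose the exponents $p,q$ and the smallness thresholds for $\|w\|,\theta,\eta_0$ simultaneously so that \emph{every} term on the right-hand side decays at least as fast as the target and with a strictly smaller constant. Verifying that the integral estimates actually give exponents $\ge p$ (resp.\ $\ge q$) in all cases — in particular checking the boundary/saturation regime where $2\nu(1-\alpha)$ versus $\alpha-\epsilon$ switch which is the min — is the delicate part, and is exactly where Propositions~\ref{prop:est-ex} and~\ref{prop:est-noisy} from the appendix do the heavy lifting. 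Everything else is routine estimation once those lemmas are in hand.
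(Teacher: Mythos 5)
Your proposal follows essentially the same route as the paper: the bias--variance decomposition, the coupled induction on $a_k=\E[\|e_k\|^2]$ and $b_k=\E[\|B^{1/2}e_k\|^2]$ with the target exponents $\beta=\min(2\nu(1-\alpha),\alpha-\epsilon)$ and $\gamma=\min((1+2\nu)(1-\alpha),1-\epsilon)$, the substitution of the hypotheses into the recursions from Theorems \ref{thm:err-mean} and \ref{thm:err-var} with $s=0,\tfrac12$ and $\delta=0$, and the appendix estimates plus smallness of $\|w\|$, $\eta_0$, $\theta$ to close the induction. One small correction of emphasis: the factor $a_j^{\theta/2}\sim j^{-\theta\beta/2}$ coming from Assumption \ref{ass:stoch} is not merely ``slow enough not to help'' --- it is precisely what absorbs an otherwise fatal $\ln k$ factor in the cross and squared sums (cf.\ Remark \ref{rmk:log}), so $\theta>0$ is essential to closing the induction rather than incidental.
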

\begin{proof}
The standard bias-variance decomposition
\begin{equation*}
  \E[\|B^se_k\|^2] = \|B^s\E[e_k]\|^2 + \E[\|B^s(e_k-\E[e_{k}])\|^2],
\end{equation*}
and Theorems \ref{thm:err-mean} and \ref{thm:err-var} give the following estimate for any $s\geq0$ (recall the notation
$\phi_{j}^s$ and $\tilde s$):
\begin{align}
 \E[\|B^se_{k+1}\|^2] \leq & \Big(c_0\sum_{j=1}^{k}\eta_j\phi^{\tilde s}_j\E[\|e_j\|^2]^\frac12\E[\|B^\frac12e_j\|^2]^\frac12
        + \phi^{s+\nu}_0\|w\|\Big)^2+ n\sum_{j=1}^{k}\eta_j^2(\phi^{\tilde{s}}_j)^2\E[\|B^\frac{1}{2}e_j\|^2]\nonumber\\
        &+2nc_0\Big(\sum_{i=1}^k\eta_i\phi^{\tilde{s}}_i\E[\|B^\frac12e_i\|^2]^\frac{1}{2}\Big)\Big(
        \sum_{j=1}^k\eta_j\phi^{\tilde{s}}_j\E[\|B^\frac12e_j\|^2]^\frac{1}{2}\E[\|e_j\|^2]^\frac{\theta}{2}\Big)\nonumber\\
                      &  + nc_0^2\Big(\sum_{j=1}^k\eta_j\phi^{\tilde{s}}_j
                         \E[\|B^\frac12e_{j}\|^2]^\frac12\E[\|e_j\|^2]^\frac\theta2\Big)^2, \label{eqn:recur-err-ex}
\end{align}
with the constant $c_0=\tfrac{(2+\theta-\eta)c_R}{(1+\theta)(1-\eta)}$. The recursion
\eqref{eqn:recur-err-ex} forms the basis for the derivation below. With the step size schedule $\eta_j$ in
Assumption \ref{ass:stepsize}(ii), Lemmas \ref{lem:estimate-B} and \ref{lem:basicest} directly give
\begin{align*}
  \phi_0^{s+\nu} & \leq \frac{(s+\nu)^{s+\nu}}{e^{s+\nu}(\sum_{i=1}^k\eta_i)^{s+\nu}}
   \leq \frac{(s+\nu)^{s+\nu}(1-\alpha)^{\nu+s}}{e^{s+\nu}\eta_0^{\nu+s}(1-2^{\alpha-1})^{\nu+s}}(k+1)^{-(1-\alpha)(\nu+s)}.
\end{align*}
Note that the function $\frac{s^s}{e^s}$ is decreasing in $s$ over the interval $[0,1]$, and the function $\frac{1-\alpha}{1-2^{\alpha-1}}$
is decreasing in $\alpha$ over the interval $[0,1]$ (and upper bounded by $2$). Since $\eta_0\leq 1$, for any
$0\leq \nu,s\leq \frac12$,  there holds
\begin{align}\label{eqn:phi-nu}
  \phi_0^{s+\nu} \leq c_\nu (k+1)^{-(\nu+s)(1-\alpha)},\quad \mbox{with } c_\nu = \frac{2\nu^\nu}{\eta_0e^\nu}.
\end{align}
Let $a_j\equiv \E[\|e_j\|^2]$ and $b_j\equiv \E[\|B^\frac12e_j\|^2]$. By assumption $\|B\|\leq 1$,
we have $\phi_j^s\leq \phi_j^{\bar s}$ for any $0\leq \bar s\leq s$. Then setting $s=0$ and $s=1/2$
in the recursion \eqref{eqn:recur-err-ex} and applying \eqref{eqn:phi-nu} lead to two coupled inequalities
\begin{align}
   a_{k+1} \leq & \Big(c_0\sum_{j=1}^{k}\eta_j\phi^{\frac12}_ja_j^\frac12b_j^\frac12
        + c_\nu\|w\|(k+1)^{-\nu(1-\alpha)}\Big)^2+ n\sum_{j=1}^{k}\eta_j^2(\phi^\frac12_j)^2b_j\nonumber\\
        &+2nc_0\Big(\sum_{i=1}^k\eta_i\phi^\frac12_ib_i^\frac{1}{2}\Big)\Big(
         \sum_{j=1}^k\eta_j\phi^\frac12_jb_j^\frac{1}{2}a_j^\frac{\theta}{2}\Big)+ nc_0^2\Big(\sum_{j=1}^k\eta_j\phi^\frac12_j
                         b_j^\frac12a_j^\frac\theta2\Big)^2, \label{eqn:recur-a-ex}\\
   b_{k+1} \leq & \Big(c_0\sum_{j=1}^{k}\eta_j\phi^1_ja_j^\frac12b_j^\frac12
        + c_\nu\|w\|(k+1)^{-(\frac12+\nu)(1-\alpha)}\Big)^2+ n\Big(\sum_{j=1}^{[\frac k2]}\eta_j^2(\phi^r_j)^2b_j+\sum_{j=[\frac k2]+1}^{k}\eta_j^2(\phi^\frac12_j)^2b_j\Big)\nonumber\\
        &+2nc_0\Big(\sum_{i=1}^k\eta_i\phi^1_ib_i^\frac{1}{2}\Big)\Big(\sum_{j=1}^k\eta_j\phi^1_jb_j^\frac{1}{2}a_j^\frac{\theta}{2}\Big)
        + nc_0^2\Big(\sum_{j=1}^k\eta_j\phi^1_jb_{j}^\frac12a_j^\frac\theta2\Big)^2,\label{eqn:recur-b-ex}
\end{align}
with the exponent $r=\min(\frac{1}2+\nu,\frac{1-\epsilon}{2(1-\alpha)})\in(\frac12,1)$. Note that the summation in
the second bracket for $b_{k+1}$ employs two different exponents, i.e., $r$ and $\frac12$, in order to achieve
better convergence rates; see Proposition \ref{prop:est-ex} for details. The rest
of the proof is devoted to deriving the following bounds
\begin{align*}
  a_k  \leq c^*\|w\|^2k^{-\beta} \quad\mbox{and}\quad b_k  \leq c^*\|w\|^2k^{-\gamma}.
\end{align*}
with $\beta=\min(2\nu(1-\alpha),\alpha-\epsilon)$ and $\gamma=\min((1+2\nu)(1-\alpha),1-\epsilon)$,
for some constant $c^*$ to be specified below. The proof proceeds by mathematical induction.
For the case $k=1$, the estimates hold trivially for any sufficiently large $c^*$. Now we assume that the bounds
hold up to the case  $k$, and prove the assertion for the case $k+1$. Actually, it follows from
\eqref{eqn:recur-a-ex} and the induction hypothesis that (with $\varrho =c^*\|w\|^2$)
\begin{align*}
     a_{k+1} \leq & \Big(c_0\varrho\sum_{j=1}^{k}\eta_j\phi^{\frac12}_jj^{-\frac{\beta+\gamma}{2}}
        + c_\nu\|w\|(k+1)^{-\nu(1-\alpha)}\Big)^2+ n\varrho\sum_{j=1}^{k}\eta_j^2(\phi^\frac12_j)^2j^{-\gamma} \nonumber\\
        &+2nc_0\varrho^{1+\frac{\theta}{2}}\Big(\sum_{i=1}^k \eta_i\phi^\frac12_ii^{-\frac{\gamma}{2}}\Big)\Big(\sum_{j=1}^k\eta_j\phi^\frac12_j j^{-\frac{\gamma+\theta\beta}{2}}\Big)+ nc_0^2\varrho ^{1+\theta}\Big(\sum_{j=1}^k\eta_j\phi^\frac12_j
                         j^{-\frac{\gamma+\beta\theta}{2}}\Big)^2.
\end{align*}
Next we bound the summations on the right hand side. By Proposition \ref{prop:est-ex} in the appendix, we have
\begin{align*}
  \sum_{j=1}^k\eta_j\phi_j^\frac12j^{-\frac{\gamma}{2}}\leq c_1 (k+1)^{-\frac{\beta}{2}} \quad\mbox{and}\quad
  \sum_{j=1}^k\eta_j^2(\phi_j^{\frac12})^2 j^{-\gamma}  \leq c_2(k+1)^{-\beta},
\end{align*}
with $c_1=2^\frac{\beta}{2}\eta_0^\frac12(2^{-1} B(\tfrac12,\zeta)+1)$, $\zeta=(\frac{1}{2}-\nu)(1-\alpha)>0$,
($B(\cdot,\cdot)$ denotes the Beta function, defined in \eqref{eqn:Beta}), and $c_2=2^{\beta}\eta_0(\alpha^{-1} + 2)$. Thus, we obtain
\begin{align}\label{eqn:bdd-a}
  a_{k+1} \leq \big((c_0c_1\varrho + c_\nu\|w\|)^2+ nc_2\varrho
         +2nc_0c_1^2\varrho^{1+\frac{\theta}{2}}+ nc_0^2c_1^2\varrho^{1+\theta}\big)(k+1)^{-\beta}.
\end{align}
Similarly, for the term  $b_k$, it follows from \eqref{eqn:phi-nu}, \eqref{eqn:recur-b-ex} (with the choice
$r=\min(\frac12+\nu,\frac{1-\epsilon}{2(1-\alpha)})\in(\frac12,1)$) and the induction hypothesis that
\begin{align*}
     b_{k+1} \leq & \Big(c_0\varrho \sum_{j=1}^{k}\eta_j\phi^1_jj^{-\frac{\beta+\gamma}{2}}
        + c_\nu\|w\|(k+1)^{-(\frac12+\nu)(1-\alpha)}\Big)^2+ n\varrho \Big(\sum_{j=1}^{[\frac k2]}\eta_j^2(\phi^r_j)^2j^{-\gamma}+\sum_{j=[\frac k2]+1}^{k}\eta_j^2(\phi^\frac12_j)^2j^{-\gamma}\Big)\nonumber\\
        &+2nc_0\varrho ^{1+\frac\theta2}\Big(\sum_{i=1}^k\eta_i\phi^1_ii^{-\frac{\gamma}{2}}\Big)\Big(\sum_{j=1}^k\eta_j\phi^1_j j^{-\frac{\gamma+\theta\beta}{2}}\Big) + nc_0^2\varrho^{1+\theta}\Big(\sum_{j=1}^k\eta_j\phi^1_j
                         j^{-\frac{\gamma+\theta\beta}{2}}\Big)^2,
\end{align*}
By Proposition \ref{prop:est-ex} in the appendix, there hold
\begin{align*}
\sum_{j=1}^{k}\eta_j\phi^1_jj^{-\frac{\beta+\gamma}{2}} \leq c_1'(k+1)^{-\frac\gamma2},\,\,\quad
     \sum_{j=1}^{[\frac k2]} \eta_j^2(\phi_j^r)^2j^{-\gamma} + \sum_{j=[\frac k2]+1}^{k}\eta_j^2(\phi_j^\frac12)^2j^{-\gamma}
             \leq c_2'(k+1)^{-\gamma}, \\
   \Big(\sum_{i=1}^k\eta_i\phi^1_ii^{-\frac{\gamma}{2}}\Big)\Big(\sum_{j=1}^k\eta_j\phi^1_j j^{-\frac{\gamma+\theta\beta}{2}}\Big)
     \leq c_3'^2(k+1)^{-\gamma},\,\,\quad
   \sum_{j=1}^k\eta_j\phi^1_jj^{-\frac{\gamma+\theta\beta}{2}}  \leq c_4'(k+1)^{-\frac\gamma2},
\end{align*}
with $c_1'=2^\frac{\gamma}{2}(\zeta^{-1}+2\beta^{-1}+1)$, $c_2'=2^\gamma\eta_0^{2-2r}(3\alpha^{-1}+1)$,
$c_3'=2^\frac{\gamma}{2}(((\tfrac{1}{2}-\nu-\theta\nu)(1-\alpha))^{-1}+4(\theta\beta)^{-1}+1)$ and
$c_4'=2^{\frac{\gamma}{2}}(\zeta^{-1}+2(\theta\beta)^{-1}+1)$. Combining the preceding estimates yields
\begin{align}\label{eqn:bdd-b}
  b_{k+1} \leq \big((c_0c_1'\varrho + c_\nu\|w\|)^2 + nc_2'\varrho+2nc_0c_3'^2\varrho^{1+\frac\theta2}+nc_0^2c_4'^2\varrho^{1+\theta}\big)(k+1)^{-\gamma}.
\end{align}
In view of the estimates \eqref{eqn:bdd-a} and \eqref{eqn:bdd-b}, upon dividing by
$\varrho$, it suffices to prove the existence of some constant $c^*>0$ such that
\begin{align*}
  (c_0c_1\varrho^\frac12 + c_\nu c^{*-\frac12})^2+ nc_2 +2nc_0c_1^2\varrho^{\frac{\theta}{2}}+ nc_0^2c_1^2\varrho^{\theta} \leq 1,\\
  (c_0c_1'\varrho^\frac12 + c_\nu c^{*-\frac12})^2 + nc_2' +2nc_0c_3'^2\varrho^{\frac\theta2}+nc_0^2c_4'^2\varrho ^{\theta}\leq 1.
\end{align*}
Since the constants $c_2$ and $c_2'$ are proportional to $\eta_0$ and $\eta_0^{2-2r}$ (with the exponent $1>2-2r>0$),
respectively, for sufficiently small $\eta_0$, there holds $n\max(c_2,c_2')<1$. Now for sufficiently small $\|w\|$ and
large $c^*$ such that $\rho$ is small such that both inequalities hold. This completes the
induction step and the proof of the theorem.
\end{proof}

\begin{remark}
For SGD, the expected squared residual $\E[\|B^\frac12e_k\|^2]$ decays as
\begin{equation*}
  \E[\|B^\frac12e_k\|^2] \leq ck^{-\min((1+2\nu)(1-\alpha),1-\epsilon)},
\end{equation*}
which, in the event of $\alpha$ close to unit, is comparable with the corresponding deterministic
part {\rm(}i.e., the Landweber method{\rm)} \cite{HankeNeubauerScherzer:1995}
\begin{equation*}
  \|B^\frac12e_k\|\leq ck^{-(\nu+\frac12)(1-\alpha)}.
\end{equation*}
Meanwhile, the factor $k^{-(1-\epsilon)}$ limits the fastest possible rate, due to the random selection
of the row index $i_k$ at the $k$th SGD iteration. This represents one essential restriction from the
computational variance. Then the restriction limits the convergence rate $\E[\|e_k\|^2]$ to
$O(k^{-\min(2\nu(1-\alpha),\alpha-\epsilon)})$. Thus for optimal decay estimates, the largest possible
smoothness index is $\nu=\frac12$, beyond which the error estimate suffers from suboptimality
{\rm(}however, note that the suboptimality is also present under the given form of the source condition
in Assumption \ref{ass:sol}(iv) for nonlinear inverse problems \cite{HankeNeubauerScherzer:1995}{\rm)}.
Further, it shows the impact of the exponent $\alpha$: a smaller $\alpha$ can potentially restrict
the reconstruction error $\E[\|e_k\|^2]$ to $O(k^{-({\alpha-\epsilon})})$.
\end{remark}

\begin{remark}
The exponent $\alpha$ in the step size schedule in Assumption \ref{ass:stepsize}(ii) enters into the constant
$c^*$ via the constants $c_1,\ldots,c_4'$ etc, and the constant $c_0$ is independent of $\alpha$. The constants
$c_1,\ldots,c_4'$ blow up either like $(1-\alpha)^{-1}$ as $\alpha\to 1^{-}$, according to the well-known asymptotic
behavior of the Beta function, or like $\alpha^{-1}$ as $\alpha\to 0^+$. These dependencies partly exhibit
the delicacy of choosing a proper step size schedule in the SGD iteration.
\end{remark}

\begin{remark}
We briefly comment on the ``smallness'' conditions on $w$, $\eta_0$ and $\theta$ in the convergence rates analysis.
The smallness assumption on the representer $w$ in the source condition in Assumption \ref{ass:sol}(iv) appears also
for the classical Landweber method \cite{HankeNeubauerScherzer:1995} and the standard Tikhonov regularization
\cite{EnglHankeNeubauer:1996,ItoJin:2011}, and thus it is not very surprising. The smallness condition on $\eta_0$,
roughly proportional to $n^{-\frac{1}{2-2r}}$, is to control the influence of the computational variance, and in a slightly
different context of statistical learning theory, similar conditions also appear in the convergence analysis of
variants of SGD, e.g., SGD without replacement. The smallness condition
on the exponent $\theta$ is only for facilitating the analysis, i.e., a concise form of the constant $c_3'$, and the
assumption can be removed at the expense of a less transparent {\rm(}and actually far more benign{\rm)} expression
for $c_3'$; see the proof in Proposition \ref{prop:est-ex} and Remark \ref{rmk:log}.
\end{remark}

Last, we derive convergence rates for noisy data $y^\delta$ in Theorem \ref{thm:err-total}.
\begin{proof}[Proof of Theorem \ref{thm:err-total}]
The proof is similar to that of Theorem \ref{thm:err-total-ex}. Let $a_j\equiv\E[\|e_j^\delta\|^2]$ and
$b_j\equiv\E[\|B^\frac12e_j^\delta\|^2]$. Then with the constant $c_0=\frac{(2+\theta-\eta)c_R}{(1+\theta)(1-\eta)}$,
repeating the argument for Theorem \ref{thm:err-total-ex} leads to the following two coupled recursions:
\begin{align*}
    a_{k+1}\leq &\Big(\sum_{j=1}^{k}\eta_j \phi_j^\frac12\big(c_0a_j^\frac12b_j^\frac12 + c_Ra_j^\frac12\delta + \delta\big)+ c_\nu\|w\|(k+1)^{-\nu(1-\alpha)} \Big)^2+n\sum_{j=1}^{k}\eta_j^2 (\phi_j^\frac12)^2(b_j^\frac{1}{2}+\delta)^2\\
   & +2n\Big(\sum_{i=1}^k\eta_i\phi_i^\frac12(b_i^\frac12+\delta)\Big)\Big(\sum_{j=1}^k\eta_j\phi_j^\frac12
   (c_0b_j^\frac{1}{2}+c_R\delta)a_j^\frac\theta2\Big)
   + n\Big(\sum_{j=1}^k\eta_j \phi_j^\frac12(c_0b_j^\frac12+c_R\delta)a_j^\frac\theta2\Big)^2,\\
   b_{k+1}\leq &\Big(\sum_{j=1}^{k}\eta_j \phi_j^1\big(c_0a_j^\frac12b_j^\frac12 + c_Ra_j^\frac12\delta + \delta\big)+ c_\nu\|w\|(k+1)^{-(\nu+\frac12)(1-\alpha)} \Big)^2
     +n\sum_{j=1}^{k}\eta_j^2 (\phi_j^1)^2(b_j^\frac{1}{2}+\delta)^2\\
   & +2n\Big(\sum_{i=1}^k\eta_i\phi_i^1(b_i^\frac12+\delta)\Big)
   \Big(\sum_{j=1}^k\eta_j\phi_j^1(c_0b_j^\frac{1}{2}+c_R\delta)a_j^\frac\theta2 \Big) + n\Big(\sum_{j=1}^k\eta_j \phi_j^1
                         (c_0b_j^\frac12+c_R\delta)a_j^\frac\theta2\Big)^2.
\end{align*}
Next we prove the following bounds
\begin{equation*}
  a_k  \leq c^*\|w\|^2k^{-\beta}\quad \mbox{and}\quad b_k \leq c^*\|w\|^2k^{-\gamma},
\end{equation*}
for all $k\leq k^*=[(\frac{\delta}{\|w\|})^{-\frac{2}{(2\nu+1)(1-\alpha)}}]$, with $\beta=\min(2\nu(1-\alpha),
\alpha-\epsilon)$ and $\gamma = \min((1+2\nu)(1-\alpha),1-\epsilon)$, where the constant $c^*$ is to be specified below.
By the choice of $k^*$, for any $k\leq k^*$, there holds
\begin{equation}\label{eqn:k-delta}
  k^{\frac{1-\alpha}{2}}\delta \leq k^{-\nu(1-\alpha)}\|w\|,
\end{equation}
which provides an easy way to bound the terms involving $\delta$ in the recursions. Similar to Theorem \ref{thm:err-total-ex}, the proof
proceeds by mathematical induction. The assertion holds trivially for the case $k=1$. Now assume that the bounds hold up to some
$k<k^*$, and we prove the assertion for the case $k+1\leq k^*$. Upon substituting the induction hypothesis, with the shorthand
$\varrho=c^*\|w\|^2$, we obtain
\begin{align*}
      a_{k+1}\leq &\Big(\sum_{j=1}^{k}\eta_j \phi_j^\frac12\big(c_0\varrho j^{-\frac{\beta+\gamma}{2}} + c_R\varrho^\frac12j^{-\frac\beta2}\delta + \delta\big)+ c_\nu\|w\|(k+1)^{-\nu(1-\alpha)} \Big)^2\\
      & + n\sum_{j=1}^{k}\eta_j^2 (\phi_j^\frac12)^2(\varrho^\frac12j^{-\frac{\gamma}{2}}+\delta)^2
    +2n\Big(\sum_{i=1}^k\eta_i\phi_i^\frac12(\varrho^\frac12i^{-\frac\gamma2}+\delta)\Big)
   \Big(\sum_{j=1}^k\eta_j\phi_j^\frac12(c_0\varrho^\frac12j^{-\frac{\gamma}{2}}+c_R\delta)\varrho^\frac{\theta}{2}j^{-\frac{\theta\beta}{2}}\Big)\\
   &+ n\Big(\sum_{j=1}^k\eta_j \phi_j^\frac12(c_0\varrho^\frac12j^{-\frac\gamma2}+c_R\delta)\varrho^\frac{\theta}{2}j^{-\frac{\theta\beta}{2}}\Big)^2.
\end{align*}
We bound the right hand side in Proposition \ref{prop:est-noisy} in the appendix, and obtain
\begin{align}\label{}
   a_{k+1} & \leq  \Big((c_1(c_0\varrho + (c_R\varrho ^\frac12+1)\|w\|)+c_\nu\|w\| )^2 + 2n(c_2\varrho+c_3\|w\|^2)\nonumber\\
     &\qquad + 2nc_1^2(\varrho^\frac12+\|w\|)(c_0\varrho^{\frac{1}{2}}+c_R\|w\|)\varrho^\frac{\theta}{2} + nc_1^2(c_0\varrho^{\frac{1}{2}}  + c_R\|w\|)^2\varrho^\theta
     \Big)(k+1)^{-\beta},\label{eqn:bdd-a-noise}
\end{align}
where the constants $c_1,\ldots,c_3$ are given in Proposition \ref{prop:est-noisy}.
Similarly, for the term $b_k$, it follows from the induction hypothesis that
\begin{align*}
     b_{k+1}\leq &\Big(\sum_{j=1}^{k}\eta_j \phi_j^1\big(c_0\varrho j^{-\frac{\beta+\gamma}{2}} + c_R\varrho^\frac12j^{-\frac\beta2}\delta + \delta\big)+ c_\nu\|w\|(k+1)^{-(1-\alpha)(\nu+\frac12)} \Big)^2\\
     &+n\sum_{j=1}^{k}\eta_j^2 (\phi_j^1)^2(\varrho^\frac12j^{-\frac{\gamma}{2}}+\delta)^2 +2n\Big(\sum_{i=1}^k\eta_i\phi_i^1(\varrho ^\frac12i^{-\frac\gamma2}+\delta)\Big)
   \Big(\sum_{j=1}^k\eta_j\phi_j^1(c_0\varrho^\frac12j^{-\frac{\gamma}{2}}+c_R\delta)\varrho^{\frac\theta2}j^{-\frac{\theta\beta}{2}}\Big) \\
     &+ n\Big(\sum_{j=1}^k\eta_j \phi_j^1(c_0\varrho^\frac12j^{-\frac\gamma2}+c_R\delta)\varrho^{\frac{\theta}{2}}j^{-\frac{\theta\beta}{2}}\Big)^2,
\end{align*}
from which and Proposition \ref{prop:est-noisy} in the appendix, it follows that
\begin{align}
   b_{k+1} & \leq \Big((c_0c_1'\varrho + c_5'(c_R\varrho^\frac12+1)\|w\|+c_\nu\|w\|)^2 + 2n(c_2'\varrho  + c_3\|w\|^2) \nonumber\\
     &\qquad +2n(c_3'\varrho^\frac12+c_5'\|w\|)(c_0c_3'\varrho^\frac12+c_Rc_5'\|w\|)\varrho^\frac\theta2+
    n(c_0c_4'\varrho^\frac{1}{2}+c_Rc'_5\|w\|)^2\varrho^\theta\Big)(k+1)^{-\gamma},\label{eqn:bdd-b-noise}
\end{align}
where the constants $c_1',\ldots,c_5'$ are given in Proposition \ref{prop:est-noisy}. In view of the bounds
\eqref{eqn:bdd-a-noise} and \eqref{eqn:bdd-b-noise}, for small $\|w\|$ and $\eta_0$, repeating the argument
for the proof of Theorem \ref{thm:err-total-ex} (and noting the fact that {$c_1,$}$c_2,c_3,c_2'$ tends to zero
as $\eta_0\to0^+$) indicates that there exists some constant $c^*>0$ such that the
desired estimates hold. This completes the induction step and the proof of the theorem.
\end{proof}

\section{Concluding remarks} \label{sec:conc}

In this work, we have provided a first convergence analysis of stochastic gradient descent for a class of nonlinear
inverse problems. The method employs an unbiased stochastic estimate of the gradient, computed from one randomly selected
equation of the nonlinear system, and admits excellent scalability to the problem size. We proved that the method
is regularizing under the traditional tangential cone condition with \textit{a priori} parameter choice rules, and also
showed a convergence rate under canonical source condition and range invariance condition (and its stochastic
variant), for the popular polynomially decaying step size schedule. The analysis
combines techniques from both nonlinear regularization theory and stochastic calculus, and in particular, the results
extend the existing works \cite{HankeNeubauerScherzer:1995} and \cite{JinLu:2019}.

There are several avenues for further research along the line. First, it is important to verify the assumptions for
concrete nonlinear inverse problems, especially nonlinearity conditions in Assumptions \ref{ass:sol}(ii)--(iii)
and \ref{ass:stoch}, for e.g., parameter identifications for PDEs and deep neural network, which would justify the
usage of SGD for such problems. Several important inverse problems in medical imaging are precisely of the form \eqref{eqn:nonlin}, e.g.,
multifrequency electrical impedance tomography, diffuse optical spectroscopic imaging and optical tomography
with the radiative transfer equation. Second, the source condition employed in the work is canonical, and alternative
approaches, e.g., variational inequalities and approximate source condition, should also be studied for deriving
convergence rates \cite{SchusterKaltenbacher:2012}, or the Frech\'{e}t differentiability of the forward operator
in Assumption \ref{ass:sol} may be relaxed \cite{ClasonNhu:2019}. Third, the
influence of various algorithmic parameters, e.g., mini-batch, random sampling, stepsize schedules (including
adaptive rules) and \textit{a posteriori} stopping rule, should be analyzed carefully to provide useful practical
guidelines. We leave these important questions on the theoretical and practical aspects to future works.

\appendix
\section{Auxiliary estimates}\label{app:estimate}
In this appendix, we collect some auxiliary inequalities that have been used in the analysis of convergence rate
in Section \ref{ssec:rate}. Most estimates follow from routine but rather tedious
computations, and thus are deferred to this appendix. We begin with a well known estimate
on operator norms (see, e.g., \cite{LinRosasco:2017} \cite[Lemma A.1]{JinLu:2019}).
\begin{lemma}\label{lem:estimate-B}
For any $j<k$, and any symmetric and positive semidefinite operator $S$ and step sizes $\eta_j\in(0,\|S\|^{-1}]$ and $p\geq 0$, there holds
\begin{equation*}
  \|\prod_{i=j}^k(I-\eta_iS)S^p\|\leq \frac{p^p}{e^p(\sum_{i=j}^k\eta_i)^p}.
\end{equation*}
\end{lemma}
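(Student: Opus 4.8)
The plan is to reduce the operator-norm bound to a one-dimensional estimate via the spectral calculus and then to optimize an elementary scalar function. First I would observe that $S$ is self-adjoint and positive semidefinite, and that the step-size restriction $\eta_i\in(0,\|S\|^{-1}]$ ensures $0\preceq \eta_i S\preceq I$, so each factor $I-\eta_iS$ is self-adjoint with spectrum contained in $[0,1]$. Consequently $\prod_{i=j}^k(I-\eta_iS)S^p$ is a bounded Borel function of $S$, and by the spectral theorem its norm equals $\sup_{\lambda\in\mathrm{spec}(S)}g(\lambda)$, where $g(\lambda):=\lambda^p\prod_{i=j}^k(1-\eta_i\lambda)$. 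Since $\mathrm{spec}(S)\subset[0,\|S\|]$ and on this interval every factor $1-\eta_i\lambda$ lies in $[0,1]$, it suffices to bound $g(\lambda)\ge0$ for $\lambda\in[0,\|S\|]$, and in fact it does no harm to bound it over all $\lambda\ge0$.

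Next I would apply the elementary inequality $1-t\le e^{-t}$, valid for all $t\ge0$, with $t=\eta_i\lambda$, which gives
\[
  0\le g(\lambda)\le \lambda^p\prod_{i=j}^k e^{-\eta_i\lambda}=\lambda^p e^{-\sigma\lambda},\qquad \sigma:=\sum_{i=j}^k\eta_i>0 .
\]
It then remains to maximize $h(\lambda):=\lambda^p e^{-\sigma\lambda}$ over $\lambda\ge0$. For $p=0$ this is at most $1=p^p/(e^p\sigma^p)$ with the usual convention $0^0=1$; for $p>0$, differentiation gives $h'(\lambda)=\lambda^{p-1}(p-\sigma\lambda)e^{-\sigma\lambda}$, so $h$ increases on $[0,p/\sigma]$ and decreases afterwards, whence
\[
  \sup_{\lambda\ge0}h(\lambda)=h\!\left(\tfrac{p}{\sigma}\right)=\left(\tfrac{p}{\sigma}\right)^p e^{-p}=\frac{p^p}{e^p\sigma^p}.
\]
Combining the two displays yields $\|\prod_{i=j}^k(I-\eta_iS)S^p\|\le p^p/(e^p(\sum_{i=j}^k\eta_i)^p)$, as claimed.

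The argument is entirely routine and I do not expect any genuine obstacle; the only point that merits a line of justification is the spectral reduction, namely that the hypothesis $\eta_i\le\|S\|^{-1}$ is exactly what keeps the spectrum of each $I-\eta_iS$ inside $[0,1]$, so that the operator in question is a genuine nonnegative function of $S$ and its norm coincides with the supremum of the associated scalar function on the spectrum, with no sign changes or cancellations to worry about.
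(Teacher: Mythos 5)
Your proof is correct and is exactly the standard argument: the paper itself omits the proof, citing it as a well-known estimate from \cite{LinRosasco:2017} and \cite[Lemma A.1]{JinLu:2019}, where the same spectral reduction to the scalar function $\lambda^p\prod_i(1-\eta_i\lambda)$, the bound $1-t\le e^{-t}$, and the maximization of $\lambda^pe^{-\sigma\lambda}$ at $\lambda=p/\sigma$ are used. Nothing further is needed.
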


The notation $B(\cdot,\cdot)$ below denotes the Beta function defined by
\begin{equation}\label{eqn:Beta}
B(a,b)=\int_0^1s^{a-1}(1-s)^{b-1}{\rm d} s
\end{equation}
for any $a,b>0$. Note that for fixed $a$, the function $B(a,\cdot)$ is monotonically decreasing.
\begin{lemma}\label{lem:basicest}
If $\eta_j=\eta_0j^{-\alpha}$, $\alpha\in(0,1)$ and $r\in[0,1]$, $\beta\in[0,1]$, then with $\gamma=\alpha+\beta$, there hold
\begin{align*}
   \sum_{i=1}^k \eta_i & \geq (1-2^{\alpha-1})(1-\alpha)^{-1}\eta_0(k+1)^{1-\alpha},\\
   \sum_{j=1}^{k-1} \frac{\eta_j}{(\sum_{\ell=j+1}^k\eta_\ell)^{r}} j^{-\beta}& \leq
   \eta_0^{1-r} B(1-r,1-\gamma) k^{r\alpha+1-r-\gamma}, \quad r\in[0,1),  \gamma<1,\\
   \sum_{j=1}^{k-1}\frac{\eta_j}{\sum_{\ell=j+1}^k\eta_\ell}j^{-\beta} &
  \leq  \left\{\begin{array}{ll}
       2^{\gamma}(1-\gamma)^{-1}k^{-\beta},   & \gamma<1,\\
           4k^{\alpha-1}\ln k,                  & \gamma =1,\\
        2\gamma(\gamma-1)^{-1}k^{\alpha-1},     & \gamma>1,
       \end{array}\right.
       +2^{1+\gamma }k^{-\beta}\ln k.
\end{align*}
\end{lemma}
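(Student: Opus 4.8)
All three bounds are classical sum-versus-integral estimates built on the explicit form $\eta_j=\eta_0 j^{-\alpha}$ and the monotonicity of $t\mapsto t^{-\alpha}$; I would prove them in the order stated. For the latter two, the only extra ingredient I need is a crude but convenient lower bound for the inner sum,
\begin{equation*}
  \sum_{\ell=j+1}^k \eta_\ell = \eta_0\sum_{\ell=j+1}^k\ell^{-\alpha}\ge \eta_0(k-j)k^{-\alpha},\qquad 1\le j\le k-1,
\end{equation*}
obtained by bounding each of the $k-j$ summands below by $\eta_0 k^{-\alpha}$. A sharper bound would yield tighter constants but is not needed here.

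For the first estimate, since $t\mapsto t^{-\alpha}$ is decreasing I would write $\sum_{i=1}^k i^{-\alpha}\ge\int_1^{k+1}t^{-\alpha}\d t=\frac{(k+1)^{1-\alpha}-1}{1-\alpha}$, so it suffices to check $(k+1)^{1-\alpha}-1\ge(1-2^{\alpha-1})(k+1)^{1-\alpha}$, i.e. $2^{\alpha-1}(k+1)^{1-\alpha}\ge1$, which holds because $k+1\ge 2$ forces $(k+1)^{1-\alpha}\ge 2^{1-\alpha}$.

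For the second estimate, I would first use the displayed inner-sum bound together with $\eta_j j^{-\beta}=\eta_0 j^{-\gamma}$ to reduce to
\begin{equation*}
  \sum_{j=1}^{k-1}\frac{\eta_j j^{-\beta}}{\big(\sum_{\ell=j+1}^k\eta_\ell\big)^r}\le\eta_0^{1-r}k^{r\alpha}\sum_{j=1}^{k-1}j^{-\gamma}(k-j)^{-r},
\end{equation*}
and then bound the remaining sum by $\int_0^k t^{-\gamma}(k-t)^{-r}\d t$, which equals $B(1-\gamma,1-r)\,k^{1-\gamma-r}$ after the substitution $s=t/k$, giving exactly the claimed $\eta_0^{1-r}B(1-r,1-\gamma)k^{r\alpha+1-r-\gamma}$. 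The point requiring care — and the main obstacle — is that the integrand $h(t)=t^{-\gamma}(k-t)^{-r}$ is $U$-shaped, strictly decreasing on $(0,t^*]$ and strictly increasing on $[t^*,k)$ with $t^*=\gamma k/(\gamma+r)$ (with obvious degeneracies when $\gamma$ or $r$ vanishes) and blowing up at both endpoints, so the Riemann-sum comparison has to be run separately on the two monotone pieces: with $m=\lfloor t^*\rfloor$ one has $h(j)\le\int_{j-1}^j h$ for $j\le m$ and $h(j)\le\int_j^{j+1}h$ for $m<j\le k-1$, which telescopes to $\sum_{j=1}^{k-1}h(j)\le\int_0^k h$ (with a little slack), and the integrability at the endpoints uses precisely $\gamma<1$ and $r<1$.

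For the third estimate ($r=1$), the integral $\int_0^k t^{-\gamma}(k-t)^{-1}\d t$ diverges near $t=k$, so I would instead split the sum at $j=\lfloor k/2\rfloor$. On $j\le\lfloor k/2\rfloor$ the inner-sum bound gives $\sum_{\ell=j+1}^k\eta_\ell\ge\frac{\eta_0}{2}k^{1-\alpha}$, so this part is at most $2k^{\alpha-1}\sum_{j=1}^{\lfloor k/2\rfloor}j^{-\gamma}$, and $\sum_{j=1}^N j^{-\gamma}\le 1+\int_1^N t^{-\gamma}\d t$ is bounded by $\frac{N^{1-\gamma}}{1-\gamma}$, $1+\ln N$, or $\frac{\gamma}{\gamma-1}$ according as $\gamma<1$, $\gamma=1$, or $\gamma>1$; inserting $N\le k/2$ reproduces the three stated cases. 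On $\lfloor k/2\rfloor<j\le k-1$ I use $\sum_{\ell=j+1}^k\eta_\ell\ge\eta_0(k-j)k^{-\alpha}$ and $j^{-\gamma}\le 2^\gamma k^{-\gamma}$, so this part is at most $2^\gamma k^{-\beta}\sum_{m=1}^{\lceil k/2\rceil}m^{-1}\le 2^{1+\gamma}k^{-\beta}\ln k$, which is the additive logarithmic term. Throughout I would dispose of the harmless small-$k$ constraints ($k\ge 2$, needed to absorb $1-\ln 2$ into $\ln k$, and $k=1$, where the sums are empty) by direct inspection.
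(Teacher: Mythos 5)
Your proof is correct and follows essentially the same route as the paper's: the integral comparison $\sum_{i=1}^k i^{-\alpha}\ge\int_1^{k+1}s^{-\alpha}\,\d s$ for the first bound, the crude lower bound $\sum_{\ell=j+1}^k\eta_\ell\ge\eta_0(k-j)k^{-\alpha}$ followed by the Beta-function integral for the second, and the split at $j=[k/2]$ with the harmonic-sum logarithm for the third. The only difference is that you make explicit the two-monotone-pieces argument justifying $\sum_{j=1}^{k-1}j^{-\gamma}(k-j)^{-r}\le\int_0^k t^{-\gamma}(k-t)^{-r}\,\d t$, a step the paper asserts without comment; this is a welcome clarification rather than a different approach.
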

\begin{proof}
Using the inequality $1-(k+1)^{\alpha-1}\geq 1-2^{\alpha-1}$ for $k\geq 1$,
we derive the first estimate readily from
\begin{align*}
  \sum_{i=1}^k \eta_i & = \eta_0\sum_{i=1}^k i^{-\alpha} \geq \eta_0\int_{1}^{k+1}s^{-\alpha}{\rm d}s
  = \eta_0(1-\alpha)^{-1}((k+1)^{1-\alpha}-1)
      \geq \eta_0(1-\alpha)^{-1}(1-2^{\alpha-1})(k+1)^{1-\alpha}\,.
\end{align*}
Next we prove the second estimate. Since $\eta_i\geq \eta_0k^{-\alpha}$ for any $i=j+1,\ldots,k$, we have
\begin{equation}\label{eqn:sum-eta}
  \eta_0^{-1}\sum_{i=j+1}^k\eta_i\geq k^{-\alpha}(k-j).
\end{equation}
Thus, if $\alpha+\beta <1$ and $r<1$,
\begin{align*}
     \sum_{j=1}^{k-1} \frac{\eta_j}{(\sum_{\ell=j+1}^k\eta_\ell)^{r}} j^{-\beta} \leq &\eta_0^{1-r} k^{r\alpha}\sum_{j=1}^{k-1}(k-j)^{-r}j^{-(\alpha+\beta)}
     \leq  \eta_0^{1-r}k^{r\alpha}\int_0^k (k-s)^{-r}s^{-(\alpha+\beta)}{\rm d} s\\
     = &\eta_0^{1-r} B(1-r,1-\alpha-\beta) k^{r\alpha+1-r-(\alpha+\beta)}.
\end{align*}
Similarly, if $r=1$, it follows from the inequality \eqref{eqn:sum-eta} that (recall that the notation
$[\cdot]$ denotes taking the integral part of a real number)
\begin{align*}
   & \sum_{j=1}^{k-1}\frac{\eta_j}{\sum_{\ell=j+1}^k\eta_\ell}j^{-\beta} \leq k^\alpha\sum_{j=1}^{k-1}(k-j)^{-1}j^{-(\alpha+\beta)}\\
  =&k^\alpha\sum_{j=1}^{[\frac k2]}j^{-(\alpha+\beta)}(k-j)^{-1} + k^\alpha\sum_{j=[\frac k2]+1}^{k-1}j^{-(\alpha+\beta)}(k-j)^{-1}\\
  \leq & 2k^{\alpha-1}\sum_{j=1}^{[\frac k2]}j^{-(\alpha+\beta)} + 2^{\alpha+\beta}k^{-\beta}\sum_{j=[\frac k2]+1}^{k-1}(k-j)^{-1}.
\end{align*}
Simple computation gives
\begin{equation}\label{eqn:est-eta}
  \sum_{j=[\frac k2]+1}^{k-1}(k-j)^{-1}\leq 2\ln k \quad \mbox{and}\quad \sum_{j=1}^{[\frac k 2]}j^{-\gamma}\leq \left\{\begin{array}{ll}
    (1-\gamma)^{-1}(\frac{k}{2})^{1-\gamma}, & \gamma \in[0,1),\\
    2\ln k, & \gamma = 1, \\
    {\gamma}(\gamma-1)^{-1},     & \gamma >1.
    \end{array}\right.
\end{equation}
Combining the last three estimates gives the assertion for the case $r=1$, completing the proof.
\end{proof}

Next we recall two useful estimates.
\begin{lemma}\label{lem:basicest1}
For $\eta_j=\eta_0j^{-\alpha}$, with $\alpha\in(0,1)$, $\beta\in[0,1]$ and $r\geq0$, there hold
\begin{align*}
  \sum_{j=1}^{[\frac k2]}\frac{\eta_j^2}{(\sum_{\ell=j+1}^k\eta_\ell)^r}j^{-\beta} &\leq c_{\alpha,\beta,r}k^{-r(1-\alpha)+\max(0,1-2\alpha-\beta)},\\
  \sum_{j=[\frac k2]+1}^{k-1}\frac{\eta_j^2}{(\sum_{\ell=j+1}^k\eta_\ell)^r}j^{-\beta}&\leq c'_{\alpha,\beta,r}k^{-((2-r)\alpha+\beta)+\max(0,1-r)},
\end{align*}
where we slightly abuse $k^{-\max(0,0)}$ for $\ln k$, and the constants $c_{\alpha,\beta,r}$ and $c'_{\alpha,\beta,r}$ are given by
\begin{align*}
   c_{\alpha,\beta,r} & = 2^{r}\eta_0^{2-r}\left\{\begin{array}{ll}
     \frac{2\alpha+\beta}{2\alpha+\beta-1}, & 2\alpha+\beta >1,\\
     2, & 2\alpha +\beta =1,\\
     \frac{2^{2\alpha+\beta-1}}{1-2\alpha-\beta}, & 2\alpha+\beta <1,
   \end{array}\right.\quad\mbox{and}\quad
   c'_{\alpha,\beta,r}  = 2^{2\alpha+\beta}\eta_0^{2-r}\left\{\begin{array}{ll}
     \frac{r}{r-1}, & r>1,\\
     2, & r= 1,\\
     \frac{2^{r-1}}{1-r}, & r<1.
   \end{array}\right.
\end{align*}
\end{lemma}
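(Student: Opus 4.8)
The plan is to bound the denominator $\sum_{\ell=j+1}^{k}\eta_\ell$ from below in two different ways, depending on whether $j$ lies in the first half $\{1,\ldots,[k/2]\}$ or the second half $\{[k/2]+1,\ldots,k-1\}$ of the summation range, and then to reduce both sums to the elementary partial-sum estimates already recorded in \eqref{eqn:est-eta}.

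For the first sum, I would use that when $j\le[k/2]$ the index set $\{j+1,\ldots,k\}$ contains at least $[k/2]$ elements, each with $\eta_\ell\ge\eta_0k^{-\alpha}$; hence $\sum_{\ell=j+1}^k\eta_\ell\ge\tfrac12\eta_0k^{1-\alpha}$ and therefore $(\sum_{\ell=j+1}^k\eta_\ell)^{-r}\le 2^r\eta_0^{-r}k^{-r(1-\alpha)}$. Pulling this factor out and using $\eta_j^2=\eta_0^2j^{-2\alpha}$ reduces the first sum to $2^r\eta_0^{2-r}k^{-r(1-\alpha)}\sum_{j=1}^{[k/2]}j^{-(2\alpha+\beta)}$. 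Applying the partial-sum bound in \eqref{eqn:est-eta} with $\gamma=2\alpha+\beta$ then gives, according to whether $2\alpha+\beta$ exceeds, equals, or is less than $1$, a bounded factor, a factor $2\ln k$, or a factor at most $(1-2\alpha-\beta)^{-1}(k/2)^{1-2\alpha-\beta}$; these are precisely the three regimes of the exponent $\max(0,1-2\alpha-\beta)$ (with the convention that $\max(0,0)$ is read as $\ln k$), and collecting the powers of $2$ and $\eta_0$ produces the stated constant $c_{\alpha,\beta,r}$.

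For the second sum the same lower bound on the denominator is too crude, since $j$ is now close to $k$; instead I would invoke \eqref{eqn:sum-eta}, namely $\eta_0^{-1}\sum_{\ell=j+1}^k\eta_\ell\ge k^{-\alpha}(k-j)$, so that $(\sum_{\ell=j+1}^k\eta_\ell)^{-r}\le\eta_0^{-r}k^{r\alpha}(k-j)^{-r}$. Combining this with $\eta_j^2\le\eta_0^2 2^{2\alpha}k^{-2\alpha}$ and $j^{-\beta}\le 2^\beta k^{-\beta}$ (both valid because $j>[k/2]$) bounds the second sum by $2^{2\alpha+\beta}\eta_0^{2-r}k^{r\alpha-2\alpha-\beta}\sum_{j=[k/2]+1}^{k-1}(k-j)^{-r}$. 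After the change of index $m=k-j$ this remaining sum is at most $\sum_{m=1}^{[k/2]}m^{-r}$, and a second application of \eqref{eqn:est-eta}, now with $\gamma=r$, yields a bounded factor when $r>1$, a factor $2\ln k$ when $r=1$, and a factor at most $(1-r)^{-1}(k/2)^{1-r}$ when $r<1$; this gives the exponent $\max(0,1-r)$, and collecting powers of $k$ produces $k^{-((2-r)\alpha+\beta)+\max(0,1-r)}$ with the stated constant $c'_{\alpha,\beta,r}$.

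Each of the above steps is elementary; the only real care needed is the bookkeeping of the multiplicative constants and the correct handling of the two borderline regimes $2\alpha+\beta=1$ and $r=1$, where the relevant power of $k$ degenerates into a logarithm — this is the step most prone to slips, and it is exactly the case encoded by the ``abuse of notation'' for $k^{-\max(0,0)}$ in the statement.
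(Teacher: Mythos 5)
Your proposal is correct and follows essentially the same route as the paper's proof (which the paper attributes to \cite[Lemma A.3]{JinLu:2019}): lower-bound the denominator via \eqref{eqn:sum-eta}, use $k-j\ge k/2$ on the first half and $j> k/2$ on the second half to freeze the appropriate factor, and invoke the partial-sum bounds \eqref{eqn:est-eta} with $\gamma=2\alpha+\beta$ and $\gamma=r$ respectively; the resulting constants match those stated. The only nitpick is that the set $\{j+1,\dots,k\}$ has $k-j\ge\lceil k/2\rceil$ elements (not merely $[k/2]$), which is what actually yields the lower bound $\tfrac12\eta_0k^{1-\alpha}$ you use.
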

\begin{proof}
The proof is based on the estimates \eqref{eqn:sum-eta} and \eqref{eqn:est-eta} and essentially
given in \cite[Lemma A.3]{JinLu:2019}, but with the constants are corrected.
\end{proof}

The next result collects some lengthy estimates that are needed in the proof of Theorem \ref{thm:err-total-ex}.
\begin{prop}\label{prop:est-ex}
Let $\beta=\min(2\nu(1-\alpha),\alpha-\epsilon)$, $\gamma=\min((1+2\nu)(1-\alpha),1-\epsilon)$ and $r=\min(\frac12+\nu,\frac{1-\epsilon}{2(1-\alpha)})$.
Then under the conditions in Theorem \ref{thm:err-total-ex}, i.e., $\|B\|\leq 1$, $\eta_0\leq 1$ and $\theta$ being sufficiently small,
with $\zeta=(\frac12-\nu)(1-\alpha)$, the following estimates hold:
\begin{align}
  \sum_{j=1}^k\eta_j\phi_j^\frac12j^{-\frac{\gamma}{2}}&\leq 2^\frac{\beta}{2}\eta_0^\frac12(2^{-1}B(\tfrac12,\zeta)+1) (k+1)^{-\frac{\beta}{2}},\label{eqn:est1-ex}\\
  \sum_{j=1}^k\eta_j^2(\phi_j^{\frac12})^2 j^{-\gamma}  &  \leq 2^{\beta}\eta_0(\alpha^{-1} + 2)(k+1)^{-\beta},\label{eqn:est2-ex}\\
   \sum_{j=1}^{[\frac k2]} \eta_j^2(\phi_j^r)^2j^{-\gamma} + \sum_{j=[\frac k2]+1}^{k}\eta_j^2(\phi_j^\frac12)^2j^{-\gamma}
            & \leq 2^\gamma\eta_0^{2-2r}(3\alpha^{-1}+1)(k+1)^{-\gamma},\label{eqn:est3-ex}\\
   \sum_{j=1}^{k}\eta_j\phi^1_jj^{-\frac{\beta+\gamma}{2}} & \leq 2^\frac{\gamma}{2}\big(\zeta^{-1}+2\beta^{-1}+1)(k+1)^{-\frac\gamma2},         \label{eqn:est4-ex}\\
   \Big(\sum_{i=1}^k\eta_i\phi^1_ii^{-\frac{\gamma}{2}}\Big)\Big(\sum_{j=1}^k\eta_j\phi^1_j j^{-\frac{\gamma+\theta\beta}{2}}\Big)
    & \leq 2^\gamma(((\tfrac{1}{2}-\nu-\theta\nu)(1-\alpha))^{-1}+4(\theta\beta)^{-1}+1\big)^2(k+1)^{-\gamma},\label{eqn:est5-ex}\\
   \sum_{j=1}^k\eta_j\phi^1_jj^{-\frac{\gamma+\theta\beta}{2}} & \leq 2^\frac{\gamma}{2}(\zeta^{-1}+2(\theta\beta)^{-1}+1)(k+1)^{-\frac\gamma2}.\label{eqn:est6-ex}
\end{align}
\end{prop}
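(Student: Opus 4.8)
The plan is to establish all six estimates by one uniform scheme. For each sum, first replace the operator-norm factor $\phi_j^s=\|B^s\Pi_{j+1}^k(B)\|$ by a scalar: by $\|B\|\le1$ one has $\phi_j^s\le\phi_j^p$ for any $p\le s$, and then Lemma~\ref{lem:estimate-B} (with that power $p$) gives $\phi_j^p\le\tfrac{p^p}{e^p}\big(\sum_{i=j+1}^k\eta_i\big)^{-p}$. Next, bound $\sum_{i=j+1}^k\eta_i$ from below: for indices with $j\le[k/2]$ a minor variant of the first estimate of Lemma~\ref{lem:basicest} gives $\sum_{i=j+1}^k\eta_i\ge c(1-\alpha)^{-1}\eta_0(k+1)^{1-\alpha}$, while for every $j<k$ one has $\sum_{i=j+1}^k\eta_i\ge\eta_0k^{-\alpha}(k-j)$. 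Substituting either bound turns the sum into a weighted power sum of the type controlled by the remaining estimates of Lemma~\ref{lem:basicest} or by Lemma~\ref{lem:basicest1}, and a final comparison $k\asymp k+1$ (which produces the leading powers of $2$, since $k\ge\tfrac{k+1}{2}$) yields a bound of the advertised order in $k+1$. In one place — estimate \eqref{eqn:est3-ex}, which mixes the two exponents $r$ and $\tfrac12$ — one first splits the summation at $j=[k/2]$ and uses the sharper power $r$ on the head and the power $\tfrac12$ on the tail; the precise form $r=\min(\tfrac12+\nu,\tfrac{1-\epsilon}{2(1-\alpha)})$ is chosen exactly so that $r(1-\alpha)$ is large enough for the head to decay at rate $\gamma$, while $r\le\tfrac12+\nu$ keeps the residual powers of $\eta_0$ genuinely positive (needed for the smallness step in Theorem~\ref{thm:err-total-ex}) and keeps the Beta arguments positive.

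For \eqref{eqn:est1-ex} and \eqref{eqn:est2-ex} one takes $s=p=\tfrac12$. Applying Lemma~\ref{lem:estimate-B} and then the second estimate of Lemma~\ref{lem:basicest} (for \eqref{eqn:est1-ex}) or Lemma~\ref{lem:basicest1} (for \eqref{eqn:est2-ex}) with weight exponent $\tfrac\gamma2$ resp.\ $\gamma$ is legitimate because $\gamma<2(1-\alpha)$ — which follows from $\nu<\tfrac12$ and $\epsilon<\alpha$ — so that $\alpha+\tfrac\gamma2<1$. This yields, for \eqref{eqn:est1-ex}, a bound $c\,\eta_0^{1/2}B(\tfrac12,1-\alpha-\tfrac\gamma2)\,k^{-(\alpha+\gamma-1)/2}$ plus the boundary term $j=k$ of size $O(\eta_0k^{-\alpha-\gamma/2})$. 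Now $1-\alpha-\tfrac\gamma2\ge\zeta$ always, with equality when $\gamma=(1+2\nu)(1-\alpha)$, and $B(\tfrac12,\cdot)$ is decreasing, so the Beta factor is $\le B(\tfrac12,\zeta)$; and $\alpha+\gamma-1\ge\beta$ — indeed $\alpha+\gamma-1$ equals $2\nu(1-\alpha)$ or $\alpha-\epsilon$ according to which term realises $\gamma$, and both are $\ge\beta$ — so the power of $k$ is $\le k^{-\beta/2}$. Collecting the factor $2^{\beta/2}$ from $k\mapsto k+1$ and the two prefactors reproduces exactly $2^{\beta/2}\eta_0^{1/2}(2^{-1}B(\tfrac12,\zeta)+1)$. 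For \eqref{eqn:est4-ex}, \eqref{eqn:est6-ex} (and then \eqref{eqn:est5-ex}, which is just the product of the bounds for $\sum_i\eta_i\phi_i^1i^{-\gamma/2}$ and $\sum_j\eta_j\phi_j^1j^{-(\gamma+\theta\beta)/2}$) one runs the same scheme with $s=1$ — bounding $\phi_j^1\le\phi_j^p$ for a suitable $p<1$ — and invokes the second or third estimate of Lemma~\ref{lem:basicest} with weight exponents $\tfrac{\beta+\gamma}2$ resp.\ $\tfrac{\gamma+\theta\beta}2$; any $\ln k$ arising from the third estimate is harmless because it multiplies a power $k^{-(\beta+\gamma)/2}$ (resp.\ $k^{-(\gamma+\theta\beta)/2}$) that beats the target $k^{-\gamma/2}$ by a strictly positive margin involving $\beta$, and $\sup_{k\ge1}k^{-\beta/2}\ln k\le 2\beta^{-1}$, which is the origin of the $\beta^{-1}$ appearing in those constants; the $\zeta^{-1}$ terms are crude upper bounds for the reciprocals of the remaining decay/Beta exponents. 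Estimate \eqref{eqn:est3-ex} is then handled by the dyadic split described above.

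The hypothesis that $\theta$ is sufficiently small is used only through the exponents $(\tfrac12-\nu-\theta\nu)(1-\alpha)$ and $\tfrac{\gamma+\theta\beta}{2}$ appearing in \eqref{eqn:est5-ex}–\eqref{eqn:est6-ex}: for small $\theta$ the first stays positive (so the corresponding Beta argument is positive and one stays in the polynomial branch of Lemma~\ref{lem:basicest}), and the second stays in a range where no additional $\ln k$ loss occurs; for general $\theta$ one would carry a more cumbersome but more benign constant (cf.\ Remark~\ref{rmk:log}). The main obstacle is none of the individual appeals to Lemmas~\ref{lem:estimate-B}, \ref{lem:basicest}, \ref{lem:basicest1} — each is immediate — but the purely arithmetic bookkeeping: for every one of the six sums one must check that (i) the Beta-function arguments $1-p$, $1-\alpha-\tfrac\gamma2$, $1-\alpha-\tfrac{\beta+\gamma}2$, $(\tfrac12-\nu-\theta\nu)(1-\alpha)$ etc.\ are strictly positive, and (ii) the power of $k$ emerging from the lemmas, after absorbing any $\ln k$, is no larger than the claimed $-\tfrac\beta2$, $-\beta$, $-\tfrac\gamma2$ or $-\gamma$. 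Both (i) and (ii) are forced by the definitions $\beta=\min(2\nu(1-\alpha),\alpha-\epsilon)$, $\gamma=\min((1+2\nu)(1-\alpha),1-\epsilon)$ and $r=\min(\tfrac12+\nu,\tfrac{1-\epsilon}{2(1-\alpha)})$, which were engineered for exactly this; the explicit constants are then read off by tracking the prefactors from Lemmas~\ref{lem:estimate-B}–\ref{lem:basicest1} through these manipulations.
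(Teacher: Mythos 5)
Your scheme for \eqref{eqn:est1-ex}--\eqref{eqn:est4-ex} and \eqref{eqn:est6-ex} is exactly the paper's: Lemma \ref{lem:estimate-B} to scalarize $\phi_j^s$, the dyadic split at $[k/2]$, Lemmas \ref{lem:basicest}--\ref{lem:basicest1} for the resulting power sums, absorption of $\ln k$ into the surplus decay margin via $\sup_{k}k^{-r}\ln k\leq (er)^{-1}$, and the exponent arithmetic forced by $\gamma=1-\alpha+\beta$. That part is fine.

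The gap is in \eqref{eqn:est5-ex}. You assert it is ``just the product of the bounds'' for $\sum_i\eta_i\phi_i^1 i^{-\gamma/2}$ and $\sum_j\eta_j\phi_j^1 j^{-(\gamma+\theta\beta)/2}$, but your own mechanism for killing the logarithm --- ``it multiplies a power that beats the target by a strictly positive margin involving $\beta$'' --- does not apply to the first factor. Its weight is $i^{-\gamma/2}$, the target is exactly $k^{-\gamma/2}$, and the tail estimate (the $r=1$ branch of Lemma \ref{lem:basicest}, i.e.\ \eqref{eqn:bdd-phiii}) produces an irreducible term of order $k^{-\gamma/2}\ln k$ with \emph{zero} margin. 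So the best your scheme gives for that factor is $(\zeta^{-1}+2\ln k+\eta_0)\,k^{-\gamma/2}$, not $C\,k^{-\gamma/2}$ with $C$ independent of $k$ --- this is precisely the point of Remark \ref{rmk:log}, which says the $\ln k$ there ``seems not removable.'' Multiplying that by your bound $c_4'(k+1)^{-\gamma/2}$ for the second factor leaves an unabsorbed $\ln k$ and breaks the induction in Theorem \ref{thm:err-total-ex}.

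The missing idea is to exploit the $k^{-\theta\beta/2}$ surplus of the \emph{second} factor to pay for the logarithm of the \emph{first}: the paper splits $k^{-\theta\beta/2}=k^{-\theta\beta/4}\cdot k^{-\theta\beta/4}$ and moves one piece into the first bracket, so each bracket carries a term $k^{-\theta\beta/4}\ln k\leq 4(e\theta\beta)^{-1}$; this is where the $4(\theta\beta)^{-1}$ in the stated constant comes from. Equivalently, one may keep the $\ln k$ explicit in both factors and absorb $(\ln k)^2$ into $k^{-\theta\beta/2}$ only after multiplying. Either way, some cross-term redistribution between the two factors is needed; a factor-by-factor bound of the advertised form is not available, and the smallness of $\theta$ enters not only through the positivity of $(\tfrac12-\nu-\theta\nu)(1-\alpha)$ but also through this absorption step.
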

\begin{proof}
The estimates \eqref{eqn:est1-ex} and \eqref{eqn:est2-ex} are needed for bounding $a_{k+1}$,
and the others are for $b_{k+1}$. We show the estimates one by one. First, it follows
from Lemma \ref{lem:estimate-B} and the condition $\|B\|\leq 1$ that
\begin{align*}
\sum_{j=1}^k\eta_j\phi_j^\frac12j^{-\frac{\gamma}{2}} & \leq (2e)^{-\frac12}\sum_{j=1}^{k-1}\frac{\eta_j}{(\sum_{\ell=1}^k\eta_\ell)^\frac12}j^{-\frac\gamma2} + \eta_0k^{-\alpha-\frac{\gamma}{2}}
\leq (\eta_0^\frac122^{-1}B(\tfrac12,1-\alpha-\tfrac{\gamma}{2})+\eta_0) k^{\frac{1-\alpha}{2}-\frac{\gamma}{2}}.
\end{align*}
%
By the definitions of the exponents $\beta$ and $\gamma$, $\frac{1-\alpha}{2}-\frac{\gamma}{2}=-\frac{\beta}{2}$,
and $1-\alpha-\frac{\gamma}{2} \geq (\frac12-\nu)(1-\alpha):=\zeta$. This, the monotone decreasing property of
the Beta function, and the inequality $2k\geq k+1$ for $k\geq 1$ immediately imply the estimate \eqref{eqn:est1-ex}.
Second, by Lemmas \ref{lem:estimate-B} and \ref{lem:basicest1},
\begin{align*}
   &\quad\sum_{j=1}^k\eta_j^2(\phi_j^{\frac12})^2 j^{-\gamma}\leq (2e)^{-1}\sum_{j=1}^{k-1}\frac{\eta_j^2}{\sum_{\ell=j+1}^k\eta_j}j^{-\gamma}+\eta_0^2\|B^\frac12\|^2k^{-2\alpha-\gamma}\\
  &\leq \eta_0\Big((2e)^{-1}\frac{2(2\alpha+\gamma)}{2\alpha+\gamma-1}k^{-(1-\alpha)}+(2e)^{-1}2^{1+2\alpha+\gamma}k^{-\alpha-\gamma}\ln k+\eta_0\|B^\frac12\|^2k^{-2\alpha-\gamma}\Big).
\end{align*}
Using the definitions of the exponents $\beta$ and $\gamma$ again, for any $r>0$, there holds
\begin{equation}\label{eqn:bdd-log}
s^{-r}\ln s \leq (er)^{-1},\quad \forall s\geq 0\,; \quad
k^{-\alpha-\gamma}\ln k=k^{-\beta}(k^{-1}\ln k)\leq e^{-1} k^{-\beta}\,.
\end{equation}
Further, by the definition of $\gamma$, $2\alpha+\gamma\leq\min(2,1+2\alpha)\leq 2$,
and since $\epsilon<\frac\alpha2$, $2\alpha+\gamma-1\geq \alpha$,
\begin{equation}\label{eqn:al-gam}
 \frac{2\alpha+\gamma}{2\alpha+\gamma-1}=1+\frac{1}{2\alpha+\gamma-1}\leq1+\alpha^{-1}.
\end{equation}
Then, combining the preceding estimates (with $\|B\|\leq 1$) leads to
\begin{equation*}
   \sum_{j=1}^k\eta_j^2(\phi_j^{\frac12})^2 j^{-\gamma}  \leq 2^{\beta}\eta_0\big(\alpha^{-1}+2\big)(k+1)^{-\beta}\,.
\end{equation*}
This proves the estimate \eqref{eqn:est2-ex}. Next, with the choice $r=\min(\frac{1}{2}+\nu,
\frac{1-\epsilon}{2(1-\alpha)})\in(\frac12,1)$, and with the help of \eqref{eqn:bdd-log}-\eqref{eqn:al-gam},
Lemmas \ref{lem:estimate-B} and \ref{lem:basicest1} and the monotone
property of function $\frac{s^s}{e^s}$ over the interval $[0,1]$ imply
\begin{align*}
 &\quad \sum_{j=1}^{[\frac k2]} \eta_j^2(\phi_j^r)^2j^{-\gamma} + \sum_{j=[\frac k2]+1}^{k}\eta_j^2(\phi_j^\frac12)^2j^{-\gamma} \\
 & \leq (2e)^{-1}\Big(\sum_{j=1}^{[\frac k2]} \frac{\eta_j^2}{(\sum_{\ell=1}^j\eta_\ell)^{2r}}j^{-\gamma} +\sum_{j=[\frac k2]+1}^{k-1}\frac{\eta_j^2}{\sum_{\ell=j+1}^k\eta_\ell}j^{-\gamma}\Big) + \eta_0^2k^{-2\alpha-\gamma}\\
 & \leq \eta_0^{2-2r}\frac{2^{2r}(2\alpha+\gamma)}{2e(2\alpha+\gamma-1)}k^{-\gamma} + \frac{2^{1+2\alpha+\gamma}}{2e}\eta_0k^{-(\alpha+\gamma)}\ln k+\eta_0^2k^{-2\alpha-\gamma}\\
 &\leq 2^\gamma\eta_0^{2-2r}(3\alpha^{-1}+1)(k+1)^{-\gamma}\,.
\end{align*}
This shows the estimate \eqref{eqn:est3-ex}.
Next, we bound $\sum_{j=1}^k\eta_j\phi_j^1j^{-\sigma} $
for any $\sigma\in[\frac{\gamma}{2},\frac{\gamma+\beta}{2}]$, and then set $\sigma$ to $\frac{\gamma}{2}$,
$\frac{\gamma+\theta\beta}{2}$ and $\frac{\gamma+\beta}{2}$ to complete the proof of the proposition.
By Lemmas \ref{lem:estimate-B} and \ref{lem:basicest},
\begin{align}
  \sum_{j=1}^{[\frac k2]}\eta_j\phi_j^1j^{-\sigma} & \leq e^{-1}\left\{\begin{array}{ll}
       \frac{2^{\alpha+\sigma}}{1-\alpha-\sigma}k^{-\sigma},  & \alpha+\sigma<1,\\
           4k^{\alpha-1}\ln k,                   & \alpha+\sigma=1,\\
        \frac{2(\alpha+\sigma)}{\alpha+\sigma-1}k^{\alpha-1},      & \alpha+\sigma>1,
       \end{array}\right.\label{eqn:bdd-phii}\\
   \sum_{[\frac k2]+1}^k\eta_j\phi_j^1j^{-\sigma} & \leq e^{-1}2^{1+\alpha+\sigma}k^{-\sigma}\ln k +\eta_0k^{-\sigma}.\label{eqn:bdd-phiii}
\end{align}
Thus, by \eqref{eqn:bdd-log} and
the inequality $(1-\alpha-\frac{\gamma}{2})^{-1}\leq \zeta^{-1}$,
and $\alpha+\frac\gamma2<1$, $\|B\|\leq 1$ and $\eta_0\leq 1$, it follows that
\begin{align*}
   \sum_{j=1}^{k}\eta_j\phi^1_jj^{-\frac{\beta+\gamma}{2}} & \leq \sum_{j=1}^{[\frac k2]}\eta_j\phi^1_jj^{-\frac{\gamma}{2}} + \sum_{j=[\frac k2]+1}^{k}\eta_j\phi^1_jj^{-\frac{\beta+\gamma}{2}}\\
    & \leq 2^{\alpha+\frac{\gamma}{2}}e^{-1}(1-\alpha-\tfrac{\gamma}{2})^{-1}k^{-\frac{\gamma}{2}}+2^{1+\alpha+\frac{\gamma+\beta}2}e^{-1}
    k^{-\frac{\gamma+\beta}2}\ln k+\eta_0k^{-\frac{\gamma}{2}}\\
    & \leq 2^\frac{\gamma}{2}\big(\zeta^{-1}+2\beta^{-1}+1)(k+1)^{-\frac\gamma2},
\end{align*}
where the last line is due to the inequalities $2^{1+\alpha+\frac{\beta+\gamma}{2}}< e^2$, by the definitions of
the exponents $\beta$ and $\gamma$. This shows the estimate \eqref{eqn:est4-ex}. Next, we turn to the assertion
\eqref{eqn:est5-ex}. Since $\theta$ is small, we may assume $\theta<\frac{1}{2\nu}-1\leq \frac{1-\alpha}{\beta}-1$.
Then in view of the relations $\gamma=1-\alpha+\beta$ and $\beta\leq 2\nu(1-\alpha)$, direct computation shows
$1-\alpha-\frac{\gamma+\theta\beta}{2} \geq (\frac{1}{2}-\nu-\theta\nu)(1-\alpha)>0$.
Further, since $\theta<\frac{1-\alpha}{\beta}-1$, thus $\min(\frac{\theta\beta}{2},1-\alpha-\frac{\gamma}{2})=
\frac{\theta\beta}{2}$.
Consequently, it follows from the estimates \eqref{eqn:bdd-phii} and \eqref{eqn:bdd-phiii} that
\begin{align*}
   \Big(\sum_{i=1}^k\eta_i\phi^1_ii^{-\frac{\gamma}{2}}\Big)\Big(\sum_{j=1}^k\eta_j&\phi^1_j j^{-\frac{\gamma+\theta\beta}{2}}\Big)
    \leq \Big(\frac{2^{\alpha+\frac{\gamma}{2}}}{e(1-\alpha-\tfrac{\gamma}{2})}+\frac{2^{1+\alpha+\frac{\gamma}2}}{e}\ln k+1\Big)\\
    &\times \Big(\frac{2^{\alpha+\frac{\gamma+\theta\beta}{2}}}{e(1-\alpha-\tfrac{\gamma+\theta\beta}{2})}k^{-\min(\frac{\theta\beta}{2},1-\alpha-\frac{\gamma}{2})}+\frac{2^{1+\alpha+\frac{\gamma+\theta\beta}2}}{e}
    k^{-\frac{\theta\beta}2}\ln k+k^{-\frac{\theta\beta}{2}}\Big)k^{-\gamma}.
\end{align*}
Next we proceed by moving the extra $k^{-\frac{\theta\beta}{2}}$ in the second bracket to the first one so as
to obtain a uniform bound using the estimate \eqref{eqn:bdd-log} by
\begin{align*}
   &\quad\Big(\sum_{i=1}^k\eta_i\phi^1_ii^{-\frac{\gamma}{2}}\Big)\Big(\sum_{j=1}^k\eta_j\phi^1_j j^{-\frac{\gamma+\theta\beta}{2}}\Big) \\
    &\leq \Big(\frac{2^{\alpha+\frac{\gamma}{2}}}{e(1-\alpha-\tfrac{\gamma}{2})}+\frac{2^{1+\alpha+\frac{\gamma}2}}{e}
     k^{-\frac{\theta\beta}{4}}\ln k+1\Big)
    \Big(\frac{2^{\alpha+\frac{\gamma+\theta\beta}{2}}}{e(1-\alpha-\tfrac{\gamma+\theta\beta}{2})}+\frac{2^{1+\alpha+\frac{\gamma+\theta\beta}2}}{e}
    k^{-\frac{\theta\beta}{4}}\ln k+1\Big)k^{-\gamma}\\
    & \leq 2^\gamma(((\tfrac{1}{2}-\nu-\theta\nu)(1-\alpha))^{-1}+4(\theta\beta)^{-1}+1\big)^2(k+1)^{-\gamma},
\end{align*}
proving \eqref{eqn:est5-ex}. The proof of \eqref{eqn:est6-ex} is similar to \eqref{eqn:est4-ex}
and hence omitted. This completes the proof.
\end{proof}

\begin{remark}\label{rmk:log}
The proof of Proposition \ref{prop:est-ex} indicates the following estimate
\begin{align*}
  \sum_{j=1}^{k-1}\eta_j\phi_j^1j^{-\frac{\gamma}{2}} & \leq (\zeta^{-1}+2\ln k )k^{-\frac{\gamma}{2}} .
\end{align*}
The logarithmic factor $\ln k$ above seems not removable, and precludes a direct application of
mathematical induction in the proof of Theorem  \ref{thm:err-total-ex}. The extra factor $j^{-\frac{\theta\beta}
{2}}$ due to Assumption \ref{ass:stoch} allows gracefully compensating the logarithmic factor $\ln k$ using the estimate
\eqref{eqn:bdd-log}.

The smallness condition on the parameter $\theta$ in the estimates \eqref{eqn:est5-ex} can be removed but
at the expense of less transparent dependence. Specifically, by Lemma \ref{lem:basicest}, with $\sigma=
\alpha+\frac{\gamma+\theta\beta}{2}$, there holds
\begin{align*}
  \sum_{j=1}^{k}\eta_j\phi_j^1j^{-\frac{\gamma+\theta\beta}{2}} \leq e^{-1}k^{-\frac{\gamma}{2}}\left\{\begin{array}{ll}
       \frac{2^{\sigma}}{1-\sigma}k^{-\frac{\theta\beta}{2}},       & \sigma <1\\
           4k^{-(1-\alpha-\frac{\gamma}{2})}\ln k,                  & \sigma =1\\
        \frac{2\sigma }{\sigma-1}k^{-(1-\alpha-\frac{\gamma}{2})},   & \sigma>1
       \end{array}\right. + 2^{1+\sigma }e^{-1}
    k^{-\frac{\gamma}{2}-\frac{\theta\beta}2}\ln k+k^{-(\alpha+\frac{\gamma+\theta\beta}{2})}.
\end{align*}
Instead of applying \eqref{eqn:bdd-log} directly, we rearrange the terms and discuss the cases $\sigma<1$, $\sigma=1$ and $\sigma>1$ separately
with the argument in the proof of Proposition \ref{prop:est-ex} and obtain the following estimate
\begin{align*}
\Big(\sum_{i=1}^k\eta_i\phi^1_ii^{-\frac{\gamma}{2}}\Big)\Big(\sum_{j=1}^k\eta_j\phi^1_j j^{-\frac{\gamma+\theta\beta}{2}}\Big) \leq & c_{\sigma} 2^\gamma (k+1)^{-\gamma},
\end{align*}
with the constant $c_\sigma$ given by
\begin{align*}
 c_\sigma = \left\{
 \begin{array}{ll}
   (1-\sigma)^{-1}+4(\theta\beta)^{-1}+1, & \sigma <1,\\
   \zeta^{-1}+8(\theta\beta)^{-1}+1, & \sigma =1 ,\\
   2(\sigma-1)^{-1}+3\zeta^{-1}+1, & \sigma >1.
 \end{array}\right.
\end{align*}
\end{remark}

The next result gives some basic estimates used in the proof of Theorem \ref{thm:err-total}.
\begin{prop}\label{prop:est-noisy}
Under the induction hypothesis of Theorem \ref{thm:err-total} and \eqref{eqn:k-delta}, there hold
\begin{align*}
   a_{k+1} & \leq  \Big((c_1(c_0\varrho + (c_R\varrho ^\frac12+1)\|w\|)+c_\nu\|w\| )^2 + 2n(c_2\varrho +c_3\|w\|^2)\\
     &\qquad + 2nc_1^2\big(\varrho^\frac12+\|w\|\big)\big(c_0\varrho^{\frac{1}{2}}+c_R\|w\|\big)\varrho^\frac{\theta}{2} + nc_1^2(c_0\varrho^{\frac{1}{2}}  + c_R\|w\|)^2\varrho^\theta
     \Big)(k+1)^{-\beta},\\
   b_{k+1} & \leq \Big((c_0c_1'\varrho + c_5'(c_R\varrho^\frac12+1)\|w\|+c_\nu\|w\|)^2 + 2n(c_2'\varrho+ c_3\|w\|^2) \\
     &\qquad +2n(c_3'\varrho^\frac12+c_5'\|w\|)(c_0c_3'\varrho^\frac12+c_5'c_R\|w\|)\varrho^\frac\theta2+
    n(c_0c_4'\varrho^\frac{1}{2}+c'_5c_R\|w\|)^2\varrho^\theta\Big)(k+1)^{-\gamma},
\end{align*}
where the constants $c_1,c_2,c_3$ and $c_1',\ldots,c_5'$ are given in the proof.
\end{prop}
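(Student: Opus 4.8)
The plan is to prove both displayed inequalities by substituting the induction hypotheses $a_j=\E[\|e_j^\delta\|^2]\le\varrho j^{-\beta}$ and $b_j=\E[\|B^{\frac12}e_j^\delta\|^2]\le\varrho j^{-\gamma}$ (with $\varrho=c^*\|w\|^2$) into the two coupled recursions for $a_{k+1}$ and $b_{k+1}$ already written down in the proof of Theorem \ref{thm:err-total}, and then bounding each resulting sum individually by a constant times the appropriate negative power of $k+1$. After substitution the recursions contain three kinds of sums: (i) the ``$\delta$-free'' sums $\sum_j\eta_j\phi_j^s j^{-c}$ and $\sum_j\eta_j^2(\phi_j^s)^2 j^{-c}$ that already appear in Proposition \ref{prop:est-ex} (with $s=\tfrac12$ when estimating $a_{k+1}$, $s=1$ when estimating $b_{k+1}$); (ii) sums carrying one factor of $\delta$; and (iii) sums carrying $\delta^2$. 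The first step is simply to quote Proposition \ref{prop:est-ex} for type (i): this supplies the constants $c_1,c_2$ (resp. $c_1',\dots,c_4'$) and the target exponents $(k+1)^{-\beta/2},(k+1)^{-\beta}$ (resp. $(k+1)^{-\gamma/2},(k+1)^{-\gamma}$).

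For types (ii) and (iii) the second step is to eliminate $\delta$ via the defining property \eqref{eqn:k-delta} of $k^*$: for every $j\le k\le k^*$ one has $\delta\le\|w\|\,j^{-\frac{(2\nu+1)(1-\alpha)}{2}}$, and since $\frac{(2\nu+1)(1-\alpha)}{2}\ge\frac\gamma2\ge\frac\beta2$, each $\delta$ can be traded for $\|w\|$ times an extra negative power $j^{-\gamma/2}$ (or $j^{-(2\nu+1)(1-\alpha)/2}$ when convenient). This reduces every type (ii)/(iii) sum to one already controlled in Proposition \ref{prop:est-ex} and Lemmas \ref{lem:estimate-B}, \ref{lem:basicest}, \ref{lem:basicest1}; the resulting prefactors are what define the remaining constants, namely $c_5'$ (bounding the data-error sums $\sum_j\eta_j\phi_j^1 j^{-\beta/2}\delta$ and $\sum_j\eta_j\phi_j^1\delta$ by $c_5'\|w\|(k+1)^{-\gamma/2}$) and $c_3$ (bounding $n\sum_j\eta_j^2(\phi_j^{s})^2\delta^2$ by $c_3\|w\|^2(k+1)^{-\beta}$ for both relevant $s$). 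Squaring the mean-error bracket, adding the three variance contributions of Theorem \ref{thm:err-var}, and regrouping powers of $\varrho$ and $\|w\|$ then yields exactly the two claimed inequalities.

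The hard part will be the logarithmic factors. As noted in Remark \ref{rmk:log}, sums $\sum_j\eta_j\phi_j^1 j^{-c}$ with $c$ equal to the target exponent carry an unavoidable $\ln k$ coming from indices $j$ near $k$ (the $r=1$ branch of Lemma \ref{lem:basicest}), so matching exponents is not enough — one needs genuine slack. For the terms carrying the stochastic-nonlinearity factor $a_j^{\theta/2}=\varrho^{\theta/2}j^{-\theta\beta/2}$, the extra $j^{-\theta\beta/2}$ supplied by Assumption \ref{ass:stoch} furnishes that slack and absorbs the log through \eqref{eqn:bdd-log}, exactly as in Proposition \ref{prop:est-ex}; smallness of $\theta$ keeps $c_3'$ transparent. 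For the $c_0\varrho$-type sums the slack is $\frac{\beta+\gamma}{2}-\frac\gamma2=\frac\beta2>0$, again sufficient. The genuinely delicate case is the pure data-error sum $\sum_j\eta_j\phi_j^1\delta$ in the $b$-recursion: trading $\delta$ as above leaves slack $\frac{(2\nu+1)(1-\alpha)-\gamma}{2}$, which is strictly positive when $\gamma=1-\epsilon$ but degenerates to $0$ precisely when $\gamma=(2\nu+1)(1-\alpha)$. In that borderline regime one instead uses the telescoping identity $\sum_{j=1}^k\eta_j B\,\Pi_{j+1}^k(B)=I-\Pi_1^k(B)$: the data-error contribution to $B^{\frac12}\E[e_{k+1}^\delta]$ equals $B^{-\frac12}(I-\Pi_1^k(B))K^*(y^\delta-y^\dag)$, whose norm is at most $\|I-\Pi_1^k(B)\|\,\delta\le\delta\le\|w\|\,k^{-\gamma/2}$ with no logarithm, so the term folds into $c_5'$ after all. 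The concluding bookkeeping — that $c_2,c_2',c_3$ (and the new constants built from them) tend to $0$ as $\eta_0\to0^+$, so that the two displayed bounds close the induction in Theorem \ref{thm:err-total} for $\|w\|$ and $\eta_0$ small — is identical to the exact-data argument in Theorem \ref{thm:err-total-ex}.
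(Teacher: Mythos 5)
Your plan coincides with the paper's proof in all but one place: you substitute the induction hypotheses into the two coupled recursions, quote Proposition \ref{prop:est-ex} for the $\delta$-free sums (which is exactly where $c_1,c_2$ and $c_1',\dots,c_4'$ come from), trade each $\delta$ for $\|w\|$ times a negative power of $k$ via \eqref{eqn:k-delta}, and control the $\delta^2$-sums by the crude bound $\sum_j\eta_j^2(\phi_j^{1/2})^2\le c_3$ (the paper's \eqref{eqn:bdd-aux2}); a cosmetic slip is that in the $b$-recursion this term is bounded by $c_3\|w\|^2(k+1)^{-\gamma}$, not $(k+1)^{-\beta}$. The genuine divergence is your treatment of the pure data-error sum $\delta\sum_j\eta_j\phi_j^1$ in the $b$-recursion. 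The paper does not use your telescoping identity: it absorbs the logarithm by spending the factor $k^{-\nu(1-\alpha)}$ supplied by \eqref{eqn:k-delta}, which is precisely why $c_5'$ is defined through the bound $k^{-\nu(1-\alpha)}\sum_j\eta_j\phi_j^1 j^{-\sigma}\le \zeta^{-1}+2(\nu(1-\alpha))^{-1}+1$, the middle term arising from $k^{-\nu(1-\alpha)}\ln k\le (e\nu(1-\alpha))^{-1}$; this works uniformly over both branches of the $\min$ defining $\gamma$, including your borderline case $\gamma=(1+2\nu)(1-\alpha)$. Your alternative --- keeping the data-error contribution as the single operator sum $\sum_j\eta_j B^{1/2}\Pi_{j+1}^k(B)K^*(y^\delta-y^\dag)=B^{-1/2}(I-\Pi_1^k(B))K^*(y^\delta-y^\dag)$ and bounding it by $\delta$ with no logarithm --- is valid and arguably sharper, but it only applies to the bias part and requires returning to the representation of Lemma \ref{lem:recursion-mean} \emph{before} the triangle inequality is taken, i.e., a small restructuring of Theorem \ref{thm:err-mean}; the $\delta$-carrying factors inside the variance terms of Theorem \ref{thm:err-var} cannot be telescoped and still need either the paper's $k^{-\nu(1-\alpha)}$ absorption or the $j^{-\theta\beta/2}$ slack from Assumption \ref{ass:stoch}, which you correctly invoke for the mixed terms. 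With that caveat both routes close the induction and yield the stated bounds.
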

\begin{proof}
First we derive two useful auxiliary estimates. It follows from directly Lemmas \ref{lem:estimate-B},
\ref{lem:basicest}, and \ref{lem:basicest1} and the assumptions $\|B\|\leq 1$ and $\eta_0\leq 1$ that
for any $\sigma\in[0,1-\alpha)$, there hold
\begin{align}
  \sum_{j=1}^{k}\eta_j \phi_j^\frac12j^{-\sigma} & \leq  (2e)^{-\frac12}\sum_{j=1}^{k-1}\frac{\eta_j}{(\sum_{\ell=j+1}^k\eta_\ell)^\frac12} + \eta_0k^{-\alpha-\sigma}
    \leq \eta_0^\frac12 (2^{-1}B(\tfrac12,1-\alpha-\sigma) +1)k^{\frac{1-\alpha}{2}-\sigma},\label{eqn:bdd-aux1}\\
  \sum_{j=1}^{k}\eta_j^2 (\phi_j^\frac12)^2 & \leq (2e)^{-1}\sum_{j=1}^{k-1}\frac{\eta_j^2}{\sum_{\ell=j+1}^k\eta_\ell} + \eta_0^2k^{-2\alpha}
             \leq \eta_0(|1-2\alpha|^{-1} + \alpha^{-1}+1):=c_3,\label{eqn:bdd-aux2}
\end{align}
%
where we have abused $0^{-1}$ for $1$.
Meanwhile, by Proposition \ref{prop:est-ex}, we have
\begin{align*}
  \sum_{j=1}^k\eta_j\phi_j^\frac12j^{-\frac{\gamma}{2}}\leq c_1(k+1)^{-\frac{\beta}{2}} \quad\mbox{and}\quad
  \sum_{j=1}^k\eta_j^2(\phi_j^{\frac12})^2 j^{-\gamma} \leq c_2(k+1)^{-\beta},
\end{align*}
with $c_1=2^\frac{\beta}{2}\eta_0^\frac12(2^{-1} B(\tfrac12,\zeta)+1)$, $\zeta=(\tfrac12-\nu)(1-\alpha)$
and $c_2=2^{\beta}\eta_0(\alpha^{-1} + 2)$, then using the monotone decreasing property of the Beta function,
and the choice of $k^*$ and the fact $k+1\leq k^*$ (cf. \eqref{eqn:k-delta}), we obtain
\begin{align*}
    \sum_{j=1}^{k}\eta_j \phi_j^\frac12\big(c_0\varrho j^{-\frac{\beta+\gamma}{2}} + c_R\varrho^\frac12j^{-\frac\beta2}\delta + \delta\big)
   \leq & c_0c_1\varrho(k+1)^{-\frac\beta2} + (c_R\varrho^\frac12+1)c_1(k+1)^{\frac{1-\alpha}{2}}\delta\\
   \leq  & c_1\big( c_0\varrho + (c_R\varrho ^\frac12+1)\|w\|\big)(k+1)^{-\frac{\beta}{2}}, \\
   \sum_{j=1}^{k}\eta_j^2 (\phi_j^\frac12)^2(\varrho^\frac12j^{-\frac{\gamma}{2}}+\delta)^2
    \leq & 2(c_2\varrho +c_3\|w\|^2)(k+1)^{-\beta}.
\end{align*}
%
Likewise, by the monotonicity of the Beta function, we deduce
\begin{align*}
    &\Big(\sum_{i=1}^k\eta_i\phi_i^\frac12 (\varrho^\frac12i^{-\frac\gamma2}+\delta)\Big)\Big( \sum_{j=1}^k\eta_j\phi_j^\frac12
   (c_0\varrho^\frac12j^{-\frac{\gamma}{2}}+c_R\delta)\varrho^\frac{\theta}{2}j^{-\frac{\theta\beta}{2}}\Big)\\
   \leq&c_1^2(\varrho^\frac12+\|w\|)(c_0\varrho^{\frac{1}{2}}+c_R\|w\|)\varrho^\frac{\theta}{2}(k+1)^{-\beta},\\
    & \sum_{j=1}^k\eta_j \phi_j^\frac12(c_0\varrho^\frac12j^{-\frac\gamma2}+c_R\delta)\varrho^\frac{\theta}{2}j^{-\frac{\theta\beta}{2}}
   \leq c_1(c_0\varrho^{\frac{1}{2}}  + c_R\|w\|)\varrho^\frac{\theta}{2}(k+1)^{-\frac{\beta}{2}}.
\end{align*}
Combining the preceding four estimates  gives the desired bound on $a_{k+1}$. Now we bound the term $b_{k+1}$.
To this end, by Proposition \ref{prop:est-ex}, we have the following preliminary estimates:
\begin{align*}
\sum_{j=1}^{k}\eta_j\phi^1_jj^{-\frac{\beta+\gamma}{2}} \leq c_1'(k+1)^{-\frac\gamma2},\quad
     \sum_{j=1}^{[\frac k2]} \eta_j^2(\phi_j^r)^2j^{-\gamma} + \sum_{j=[\frac k2]+1}^{k}\eta_j^2(\phi_j^\frac12)^2j^{-\gamma}
             \leq c_2'(k+1)^{-\gamma}, \\
   \Big(\sum_{i=1}^k\eta_i\phi^1_ii^{-\frac{\gamma}{2}}\Big)\Big(\sum_{j=1}^k\eta_j\phi^1_j j^{-\frac{\gamma+\theta\beta}{2}}\Big)
     \leq c_3'^2(k+1)^{-\gamma},\quad
   \sum_{j=1}^k\eta_j\phi^1_jj^{-\frac{\gamma+\theta\beta}{2}}  \leq c_4'(k+1)^{-\frac{\gamma}{2}},
\end{align*}
with $c_1'=2^\frac{\gamma}{2}(\zeta^{-1}+2\beta^{-1}+1)$, $c_2'=2^\gamma\eta_0^{2-2r}(3\alpha^{-1}+1)$,
$c_3'=2^\frac{\gamma}{2}(((\tfrac{1}{2}-\nu-\theta\nu)(1-\alpha))^{-1}+4(\theta\beta)^{-1}+1)$ and
$c_4'=2^\frac{\gamma}{2}(\zeta^{-1}+2(\theta\beta)^{-1}+1)$.
Further, by the estimates \eqref{eqn:bdd-phii} and \eqref{eqn:bdd-phiii}, for any $\sigma\in[0,\frac\gamma2]$,
\begin{align*}
  k^{-\nu(1-\alpha)}\sum_{j=1}^{k}\eta_j\phi_j^1j^{-\sigma} & \leq \zeta^{-1}+ 2(\nu(1-\alpha))^{-1} +1 :=c_5'.
\end{align*}
With the preceding estimates and the bound \eqref{eqn:k-delta}, we deduce
\begin{align*}
   \sum_{j=1}^{k}\eta_j \phi_j^1\big(c_0\varrho j^{-\frac{\beta+\gamma}{2}} + c_R\varrho^\frac12j^{-\frac\beta2}\delta + \delta\big)
  \leq &(c_0c_1'\varrho + c_5'(c_R\varrho^\frac12+1)\|w\|)(k+1)^{-\frac\gamma2},\\
  \sum_{j=1}^{k}\eta_j^2 (\phi_j^1)^2(\varrho^\frac12j^{-\frac{\gamma}{2}}+\delta)^2\leq & 2(c_2'\varrho  + c_3\|w\|^2)(k+1)^{-\gamma},\\
     \sum_{j=1}^k\eta_j \phi_j^1(c_0\varrho^\frac12j^{-\frac\gamma2}+c_R\delta)\varrho^{\frac{\theta}{2}}j^{-\frac{\theta\beta}{2}}
 \leq & (c_0c_4'\varrho^\frac{1}{2}+c'_5c_R\|w\|)\varrho^\frac{\theta}{2}(k+1)^{-\frac\gamma2},
\end{align*}
where the second line is due to the estimate \eqref{eqn:bdd-aux2} and the inequality $\sum_{j=1}^k\eta_j^2
(\phi_j^1)^2 \leq \sum_{j=1}^k\eta_j^2(\phi_j^\frac{1}{2})^2$ (since $\|B\|\leq 1$).
Last, by repeating the argument in Proposition \ref{prop:est-ex}, we deduce
\begin{align*}
   &\Big(\sum_{i=1}^k\eta_i\phi_i^1(\varrho^\frac12i^{-\frac\gamma2}+\delta)\Big)\Big(\sum_{j=1}^k\eta_j\phi_j^1(c_0\varrho^\frac12j^{-\frac{\gamma}{2}}+c_R\delta)
   \varrho^{\frac\theta2}j^{-\frac{\theta\beta}{2}}\Big)\\
  \leq & (c_3'\varrho^\frac12+c_5'\|w\|)(c_0c_3'\varrho^\frac12+c_5'c_R\|w\|)\varrho^{\frac{\theta}{2}}(k+1)^{-\gamma}.
\end{align*}
Then combining the last four estimates yields the desired bound on $b_{k+1}$, which completes the proof.
\end{proof}

\begin{remark}
With $\delta=0$, the bounds in Proposition \ref{prop:est-noisy} recover that in the proof of Theorem \ref{thm:err-total-ex},
up to a factor $2$ in the front of the second term.
\end{remark}

\bibliographystyle{abbrv}
\bibliography{sgd}
\end{document}